\definecolor{gr}{rgb}   {0.,   0.69,   0.23 }
\definecolor{bl}{rgb}   {0.,   0.5,   1. }
\definecolor{mg}{rgb}   {0.85,  0.,    0.85}
\definecolor{yl}{rgb}   {0.8,  0.7,   0.}
\definecolor{or}{rgb}  {0.7,0.2,0.2}
\tikzset{
	dot/.style={circle,fill=black,draw=black,inner sep=0pt,minimum size=0.5mm},
	>=stealth,
	}
\tikzset{
	dot2/.style={circle,fill=black,draw=black,inner sep=0pt,minimum size=0.2mm},
	>=stealth,
	}
\tikzset{
	ddot/.style={circle,fill=white,draw=black,inner sep=0pt,minimum size=0.8mm},
	>=stealth,
	}
\tikzset{decision/.style={ 
        draw,
        diamond,
        aspect=1.5
    }}
\tikzset{dia2/.style
={diamond,fill=white,draw=black,inner sep=0pt,minimum size=1mm},
	>=stealth,
	}
\tikzset{dia/.style
={star,fill=black,draw=black,inner sep=0pt,minimum size=1mm},
	>=stealth,
	}
\tikzset{dia/.style
={diamond,fill=black,draw=black,inner sep=0pt,minimum size=1.3mm},
	>=stealth,
	}
\def\DeclareSymbol#1#2#3{\xsavebox{#1}{\tikz[baseline=#2,scale=0.15]{#3}}}
\def\<#1>{\xusebox{#1}}
\newcommand{\pe}{\mathbin{\scaleobj{0.7}{\tikz \draw (0,0) node[shape=circle,draw,inner sep=0pt,minimum size=8.5pt] {\scriptsize  $=$};}}}
\newcommand{\pl}{\mathbin{\scaleobj{0.7}{\tikz \draw (0,0) node[shape=circle,draw,inner sep=0pt,minimum size=8.5pt] {\scriptsize $<$};}}}
\newcommand{\pg}{\mathbin{\scaleobj{0.7}{\tikz \draw (0,0) node[shape=circle,draw,inner sep=0pt,minimum size=8.5pt] {\scriptsize $>$};}}}
\newcommand{\pge}{\mathbin{\scaleobj{0.7}{\tikz \draw (0,0) node[shape=circle,draw,inner sep=0pt,minimum size=8.5pt] {\scriptsize $\geqslant$};}}}
\newcommand{\pez}{\mathbin{\scaleobj{0.7}{\tikz \draw (0,0) node[shape=circle,draw,
fill=white, 
inner sep=0pt,minimum size=8.5pt]{} ;}}}
\tikzset{>=stealth',
         cvertex/.style={circle,draw=black,inner sep=1pt,outer sep=3pt},
         vertex/.style={circle,fill=black,inner sep=1pt,outer sep=3pt},
         star/.style={circle,fill=yellow,inner sep=0.75pt,outer sep=0.75pt},
         tvertex/.style={inner sep=1pt,font=\scriptsize},
         gap/.style={inner sep=0.5pt,fill=white}}
\tikzstyle{mybox} = [draw=black, fill=blue!10, very thick,
\tikzstyle{boxtitle} =[fill=blue!50, text=white,rectangle,rounded corners]
\tikzstyle{decision} = [diamond, draw, fill=blue!20,
\tikzstyle{block} = [rectangle, draw, fill=blue!20,
\tikzstyle{line} = [draw, very thick, color=black!50, -latex']
\tikzstyle{cloud} = [draw, ellipse,fill=red!40, 
\tikzstyle{cloud2} = [draw, ellipse,fill=red!30, text=white,text width=10em, node distance=2.5cm, text centered, minimum height=4em]
\tikzstyle{cloud3} = [draw, ellipse, fill=cyan!30, 
\tikzstyle{cloud4} = [draw, ellipse,fill=orange!70, node distance=2.5cm,
\tikzstyle{cloud5} = [draw, ellipse,fill=red!20, node distance=2.5cm,
\tikzstyle{cloud6} = [draw, ellipse,fill=red!20, node distance=2.5cm,
\newcommand{\arrow}[2][20]
 {
  \hspace{-5pt}
  \begin{tikzpicture}
   \node (A) at (0,0) {};
   \node (B) at (#1pt,0) {};
   \draw [#2] (A) -- (B);
  \end{tikzpicture}
  \hspace{-5pt}
 }
\tikzset{
    position/.style args={#1:#2 from #3}{
        at=(#3.#1), anchor=#1+180, shift=(#1:#2)
    }
}
\newtheorem{theorem}{Theorem} [section]
\newtheorem{lemma}[theorem]{Lemma}
\newtheorem{proposition}[theorem]{Proposition}
\newtheorem{remark}[theorem]{Remark}
\newtheorem{corollary}[theorem]{Corollary}
\DeclareMathOperator*{\supp}{supp}
\newcommand{\1}{\hspace{0.2mm}\text{I}\hspace{0.2mm}}
\newcommand{\II}{\text{I \hspace{-2.8mm} I} }
\newcommand{\noi}{\noindent}
\newcommand{\Z}{\mathbb{Z}}
\newcommand{\R}{\mathbb{R}}
\newcommand{\T}{\mathbb{T}}
\let\Re=\undefined\DeclareMathOperator*{\Re}{Re}
\let\Im=\undefined\DeclareMathOperator*{\Im}{Im}
\DeclareMathOperator{\com}{\textsf{com}}
\let\P= \undefined
\newcommand{\P}{\mathbf{P}}
\newcommand{\E}{\mathbb{E}}
\renewcommand{\L}{\mathcal{L}}
\newcommand{\F}{\mathcal{F}}
\newcommand{\al}{\alpha}
\newcommand{\be}{\beta}
\newcommand{\dl}{\delta}
\newcommand{\nb}{\nabla}
\newcommand{\Dl}{\Delta}
\newcommand{\eps}{\varepsilon}
\newcommand{\kk}{\kappa}
\newcommand{\g}{\gamma}
\newcommand{\ld}{\lambda}
\newcommand{\Ld}{\Lambda}
\newcommand{\s}{\sigma}
\newcommand{\Si}{\Sigma}
\newcommand{\ft}{\widehat}
\newcommand{\wt}{\widetilde}
\newcommand{\cj}{\overline}
\newcommand{\dt}{\partial_t}
\newcommand{\ta}{\theta}
\renewcommand{\l}{\ell}
\renewcommand{\o}{\omega}
\renewcommand{\O}{\Omega}
\newcommand{\les}{\lesssim}
\newcommand{\ges}{\gtrsim}
\newcommand{\jb}[1]
{\langle #1 \rangle}
\newcommand{\ind}{\mathbf 1}
\renewcommand{\S}{\mathcal{S}}
\newcommand{\N}{\mathbb{N}}
\renewcommand{\H}{\mathcal{H}}
\newtheorem*{ackno}{Acknowledgements}
\newcommand{\I}{\mathcal{I}}
\newcommand{\If}{\mathfrak{I}}
\newcommand{\RR}{\mathcal{R}}
\newcommand{\NR}{\textit{NR}}
\newcommand{\A}{\mathcal{A}}
\numberwithin{equation}{section}
\numberwithin{theorem}{section}
\begin{document}
\baselineskip = 14pt

\title[Paracontrolled approach to 3-$d$ stochastic  NLW]
{Paracontrolled approach to  the\\ three-dimensional stochastic nonlinear wave equation\\
with quadratic nonlinearity}

\author[M.~Gubinelli, H.~Koch, and T.~Oh]
{Massimiliano Gubinelli, Herbert Koch, and Tadahiro Oh}

\address{
Massimiliano Gubinelli\\
Hausdorff Center for Mathematics \&  Institut f\"ur Angewandte Mathematik\\
 Universit\"at Bonn\\
Endenicher Allee 60\\
D-53115 Bonn\\
Germany}
\email{gubinelli@iam.uni-bonn.de}

\address{Herbert Koch\\
Mathematisches Institut\\
 Universit\"at Bonn\\
Endenicher Allee 60\\
D-53115 Bonn\\
Germany
}

\email{koch@math.uni-bonn.de}

\address{
Tadahiro Oh, School of Mathematics\\
The University of Edinburgh\\
and The Maxwell Institute for the Mathematical Sciences\\
James Clerk Maxwell Building\\
The King's Buildings\\
Peter Guthrie Tait Road\\
Edinburgh\\ 
EH9 3FD\\
 United Kingdom}

\email{hiro.oh@ed.ac.uk}

\subjclass[2010]{35L71, 60H15}

\keywords{stochastic nonlinear wave equation; nonlinear wave equation; 
renormalization;  white noise; paracontrolled calculus}

\begin{abstract}
Using  ideas from  paracontrolled calculus, 
we prove local well-posedness
of a renormalized version of the
 three-dimensional stochastic nonlinear wave equation 
with  quadratic nonlinearity forced by 
an additive space-time white noise on a periodic domain.
There are two new ingredients as compared to the parabolic setting.
(i) In constructing stochastic objects, 
we have to carefully exploit dispersion at a multilinear level.
(ii) We introduce novel random operators and leverage their regularity to overcome 
the lack of smoothing of usual paradifferential commutators.

\end{abstract}

%
\maketitle
\tableofcontents

\newpage

\section{Introduction}
\label{SEC:1}

\subsection{Singular stochastic nonlinear wave equations}

We continue the study of singular stochastic  nonlinear wave equations (SNLW) driven by additive 
space-time white noise initiated in~\cite{GKO}. 
There we studied  the case of the SNLW equation with a polynomial nonlinearity on 
the two-dimensional torus $\T^2=(\R/2\pi \Z)^2$.
By  introducing a suitable renormalization of the nonlinearity, 
we proved a local-in-time existence and uniqueness theory. 
Global solutions on $\T^2$ have been obtained in~\cite{GKOT} for the defocusing cubic nonlinearity. 
See also \cite{Tolomeo}
for an analogous global well-posedness result on the Euclidean space $\R^2$.
Here,  we consider SNLW on the three-dimensional torus $\T^3=(\R/2\pi \Z)^3$ starting with the case of quadratic 
nonlinearity. Our aim is to provide a local well-posedness theory for the equation which formally reads
\begin{align}
\begin{cases}
\dt^2 u  + (1 -  \Dl)  u =  -u^2 +\infty + \xi\\
(u, \dt u) |_{t = 0} = (u_0, u_1) \in \H^s(\T^3), 
\end{cases}
\quad (x, t) \in \T^3\times \R_+,
\label{SNLW1}
\end{align}

\noi
where  $\H^s(\T^3) = H^s(\T^3)\times H^{s-1}(\T^3)$
 and $\xi(x, t)$ denotes a (Gaussian) space-time white noise on $\T^3\times \R_+$
with the space-time covariance given by
\[ \E\big[ \xi(x_1, t_1) \xi(x_2, t_2) \big]
= \dl(x_1 - x_2) \dl (t_1 - t_2).\]

\noi
The expression $-u^2+\infty$ denotes the renormalization of the product $u^2$. 
As we will see below, indeed, solutions to this equation are expected to be distributions 
of (spatial) regularity below~$-\frac12$.

We state our main result.

\begin{theorem}\label{THM:2-intro}
Given $\frac 14 < s < \frac 12$, 
let $(u_0, u_1) \in \H^{s}(\T^3)$.
Given $N \in \N$, let $ \xi_N =  \pi_N \xi$, 
where $\pi_N$ is the frequency projector onto the spatial frequencies $\{|n|\leq N\}$
defined in~\eqref{pi} below.
Then,  there exists a sequence of time-dependent constants $\{\s_N(t)\}_{N\in \N}$
 tending to $\infty$ \textup{(}see \eqref{sigma1} below\textup{)} such that, given small $\eps = \eps(s) > 0$, 
the solution $u_N$ to the following renormalized SNLW:
\begin{align}
\begin{cases}
\dt^2 u_N + (1-  \Dl)  u_N  =  - u_N^2  + \s_N+ \xi_N\\
(u_N, \dt u_N)|_{t = 0} = (u_0, u_1)
\end{cases}
\label{SNLW10}
\end{align}
converges to a stochastic process $u \in C([0, T]; H^{-\frac 12 -\eps} (\T^3))$ almost surely,
where $T = T(\o)$ is an almost surely positive stopping time.
\end{theorem}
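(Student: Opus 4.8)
The plan is to run a Da Prato--Debussche-type expansion around the stochastic convolution, reduce the equation for the residual to a fixed-point problem handled via a paracontrolled ansatz, and then pass to the limit $N\to\infty$. Write $\L=\dt^2+1-\Dl$ and let $\Psi_N$ solve $\L\Psi_N=\xi_N$ with zero data, with limit $\Psi$; a direct computation with the explicit Duhamel kernel shows $\Psi_N\to\Psi$ in $C([0,T];H^{-\frac12-\eps}(\T^3))$ a.s., and identifies the counterterm as the variance $\s_N(t)=\E[\Psi_N(t)^2]$, which is constant in $x$ and diverges as $N\to\infty$. Substituting $u_N=\Psi_N+v_N$ turns \eqref{SNLW10} into
\[
\L v_N=-{:}\Psi_N^2{:}-2\Psi_N v_N-v_N^2,\qquad {:}\Psi_N^2{:}=\Psi_N^2-\s_N(t),
\]
with data $(u_0,u_1)$; it then suffices to show that $v_N$ converges in $C([0,T];H^{s'})$ for some $s'>\frac12$, since then $u_N=\Psi_N+v_N$ converges in $C([0,T];H^{-\frac12-\eps})$.

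The first new ingredient --- item (i) of the abstract --- is the construction of the stochastic data. Elementary frequency counting already gives ${:}\Psi_N^2{:}$ convergent in $C_tH^{-1-\eps}$, but the Duhamel term $-\I[{:}\Psi_N^2{:}]$, which is the leading part of $v_N$, would lie only in $C_tH^{-\eps}$ if one used merely the one-derivative gain of the wave Duhamel operator --- too rough to form any of the later products against $\Psi_N$. Instead I would estimate the second moments of the multiple stochastic integrals defining $-\I[{:}\Psi_N^2{:}]$ directly, keeping the time integrations $\int_0^t\!\!\int_0^t$ intact, and extract a genuine smoothing from the oscillation of the factors $e^{\pm i t'(\jb{n}\pm\jb{n_1}\pm\jb{n_2})}$ together with the resulting dispersive denominators $|\jb{n}\pm\jb{n_1}\pm\jb{n_2}|^{-1}$ and the geometry of the constraint $n=n_1+n_2$ on $\Z^3$; this should upgrade $-\I[{:}\Psi_N^2{:}]$ to $C_tH^{\frac12-\eps}$. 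The same multilinear dispersive analysis is then run for the remaining stochastic data --- the resonant products $\Psi_N\odot(-\I[{:}\Psi_N^2{:}])$, the square $(-\I[{:}\Psi_N^2{:}])^2$, the interactions of $\Psi_N$ with the free evolution of the data, and the operator-valued objects described below --- all of which I verify converge in the appropriate inhomogeneous Wiener chaoses; no further renormalization is needed because, once the dispersive gain is available, the would-be divergent coefficients become absolutely summable over $\Z^3$. Hypercontractivity and a Kolmogorov continuity argument then promote the $L^2(\O)$-bounds to a.s.\ convergence in $C([0,T];H^\sigma(\T^3))$ for the relevant $\sigma$.

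Next, writing $v_N=-\I[{:}\Psi_N^2{:}]+Z+w_N$ with $Z=S(t)(u_0,u_1)$ the free evolution, the remainder $w_N$ carries zero data and solves $\L w_N=-2\Psi_N v_N-v_N^2$. The obstruction is the term $\I[\Psi_N v_N]$: since $\Psi_N\in C_tH^{-\frac12-\eps}$ while $v_N$ has regularity only slightly below $\frac12$, the high--high (resonant) part of $\Psi_N v_N$ is not classically defined, and --- this is item (ii) --- in contrast to the parabolic case the paradifferential commutator $\I(\Psi_N\prec\,\cdot\,)-\Psi_N\prec\I(\cdot)$ does not regularize. I would therefore impose a paracontrolled ansatz, writing $w_N$ as the sum of an explicit paracontrolled term, obtained by para-multiplying $\Psi_N$ through the Duhamel operator against the lower-order pieces of $v_N$ --- schematically $w_N=-2\,\I(\Psi_N\prec v_N)+w_N^\sharp$ --- and a smoother remainder $w_N^\sharp$, and introduce the random operators $v\mapsto\Psi_N\odot\I(\Psi_N\prec v)$ together with their variants involving $-\I[{:}\Psi_N^2{:}]$ and $Z$, which fuse the Duhamel operator, a paraproduct and a resonant product into a single object. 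The crucial estimate, which I expect to be the main obstacle of the proof, is that these operator-valued stochastic objects --- living in a fixed Wiener chaos --- are a.s.\ bounded, uniformly in $N$ and convergent as $N\to\infty$, as maps between the function spaces carrying $v$ and $w_N^\sharp$ with an admissible loss of strictly fewer than one derivative; it is this bound, again obtained from the multilinear dispersive estimates, that compensates for the missing commutator smoothing.

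With these ingredients I would close the argument by a contraction mapping for $w_N^\sharp$ --- together with its paracontrolled ``derivative'' --- in a space of the form $C([0,T];H^{s'})$ intersected with suitable Strichartz- or $X^{s,b}$-type norms, for some $s'>\frac12$ and a short time $T=T(\o)>0$, using the Strichartz and energy estimates for the wave equation, fractional Leibniz and paraproduct estimates for the remaining subcritical products, and the random-operator bounds above, all uniformly in $N$; this is the step where the hypothesis $s>\frac14$ enters, to control $v_N^2$ and the low-regularity interactions of the free evolution $Z$ with $\Psi_N$. Since every deterministic estimate is uniform in $N$ and every stochastic input converges a.s., the solution map depends continuously on the enhanced data, so $w_N^\sharp\to w^\sharp$, hence $v_N\to v$ and $u_N\to u:=\Psi+v$ in $C([0,T];H^{-\frac12-\eps}(\T^3))$ a.s.; a standard blow-up-alternative argument finally upgrades the deterministic short existence time to the a.s.\ positive stopping time $T=T(\o)$, completing the proof.
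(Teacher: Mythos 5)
Your plan follows essentially the same route as the paper's: a second-order Da Prato--Debussche expansion $u=\<1>-\<20>+v$, a paracontrolled ansatz, random operators replacing the commutator smoothing that fails for the wave propagator, multilinear dispersive estimates for the stochastic inputs via Wiener-chaos hypercontractivity and Kolmogorov continuity, and a contraction in Strichartz-adapted spaces (the $\tfrac14$-admissible pair $(8,\tfrac83)$ is what uses $s>\tfrac14$). Two technical assertions in the sketch would not survive as written, and the paper's argument differs from yours precisely at these two points.

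First, the claim that once dispersion is exploited the coefficients become ``absolutely summable over $\Z^3$'' is false for the near-resonant contributions. In the estimate for $\<21p>=\<20>\pe\<1>$ (Case~3 of Section~\ref{SEC:sto2}) and for the counterterm piece $\A^{(4)}_n$ of the paracontrolled operator (Section~\ref{SEC:po}), the phase gap $\jb{n\pm n_2}-\jb{n_2}=O(|n|)$ is too small to produce a convergent denominator; the sums over $n_2$ are only conditionally convergent, and the argument must pair $n_2\leftrightarrow-n_2$, use the oddness of $\sin$ together with the Taylor expansion~\eqref{Y14}, and --- for the fast-oscillating piece $\A^{(3)}_n$ --- integrate by parts in $t'$ against the non-resonant phase $\jb{n+n_2}+\jb{n_2}$ (see~\eqref{A7}), which is why Proposition~\ref{PROP:sto4} requires one time derivative of the input $w$, a hypothesis you would have to carry through the fixed point. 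Second, you propose to treat all of $v\mapsto\<1>\pe\I(v\pl\<1>)$ as a single random operator, but the Hilbert--Schmidt computation~\eqref{A1bis} that bounds it diverges over the full low$\times$high range $|n_1|\ll|n_2|$, since $\sum_{|n_1|\ll N_2}1\sim N_2^3$. The paper splits at the threshold $|n_1|\sim|n_2|^\theta$ for small $\theta>0$: the genuinely low-frequency range $|n_1|\ll|n_2|^\theta$ is handled probabilistically with the tame factor $N_2^{3\theta}$, while the intermediate range $|n_2|^\theta\lesssim|n_1|\ll|n_2|$ is closed \emph{deterministically} from the positive regularity of the input and the pathwise $W^{-\frac12-\eps,\infty}$ bound on $\<1>$ alone (Lemma~\ref{LEM:sto3}); this split is essential to close the operator estimate, not a convenience. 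As a minor point, no blow-up alternative is invoked: the contraction time $T$ already depends only on the a.s.\ finite $\mathcal{X}^{s_1,s_2,\eps}_T$-norm of the enhanced data set, so $T=T(\o)$ is directly an a.s.\ positive stopping time.
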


Furthermore, 
 we will provide a description of the limiting distribution $u$ in terms of the notion of 
 \emph{paracontrolled} distributions introduced in~\cite{GIP}. 

Let us comment on the need of the renormalized formulation~\eqref{SNLW10}. 
In the context of parabolic stochastic partial differential equations (SPDEs),  the need and meaning of renormalization of SPDEs have been intensely studied and much progress has been achieved
in recent years, starting with Da Prato and Debussche's strong solutions approach~\cite{DPD2} to the dynamical $\Phi^4_2$ model, continuing with Hairer's solution of the KPZ equation~\cite{Hairer0}, the subsequent invention of regularity structures~\cite{Hairer},
 and the discovery of alternative approaches such as paracontrolled distributions~\cite{GIP}, Kupiainen's 
 renormalization group  approach~\cite{Kupi, kupiainen_renormalization_2017},
and  the approach of Otto, Weber, and coauthors~\cite{otto_quasilinear_2016, bailleul_quasilinear_2016, otto_parabolic_2018}. 

On the one hand, the theory of regularity structures~\cite{Hairer, friz_course_2014, MR3728488} has since grown into a complete framework~\cite{bruned_algebraic_2016, chandra_analytic_2016, bruned_renormalising_2017, MR3855740} which can deal with a large class of parabolic equations (in the so-called \emph{subcritical} regime) 
such as  the dynamical sine-Gordon model~\cite{hairer_dynamical_2016, chandra_dynamical_2018}, the generalized KPZ equation used to describe a natural random evolution on the space of paths over a manifold~\cite{hairer_motion_2016}, and other interesting models like those related to Abelian gauge theories~\cite{shen_stochastic_2018} or some equations in the full space~\cite{MR3779690}.
On the other hand, the theory of paracontrolled distributions 
has revealed itself as an effective method for a restricted class of singular SPDEs~\cite{CC, GP1, allez_continuous_2015, zhu_three_dimensional_2015, 
hoshino_2016,
bailleul_higher_2016,  bailleul_quasilinear_2016, MW1, funaki_2017, hoshino_2017, hoshino_2017b, GH18, perkowski_kpz_2018, MR3785598}. Let us also mention that 
 certain quasilinear parabolic equations can 
  be considered using natural extension of these theories~\cite{otto_quasilinear_2016, furlan_paracontrolled_2016, gerencser_solution_2017}.

Singular SPDEs have been shown to describe large scale behavior of many random dynamical models, including both  continuous~\cite{HQ, MR3737909, MR3772400, shen_weak_2017, furlan_weak_2018, hairer_large_scale_2018} and discrete ones~\cite{MW2, cannizzaro_space_time_2016, shen_glauber_2016, martin_paracontrolled_2017, hairer_discretisations_2018, matetski_martingale_driven_2018}. 
This phenomenon has been named~\emph{weak universality}.

Renormalization can be, in first instance,  justified in order to obtain non-trivial (i.e.~nonlinear) limiting problems. 
At a deeper level, 
 singular PDEs and the need of their renormalization are tightly linked with the phenomenon of weak universality. 
 These equations are meant to describe the large scale fluctuations of well-behaved smooth random systems
  and, in this perspective, both the distributional nature of the solution and the renormalization have clear physical meanings; the irregularity of the solutions is the manifestation of the microscopic random fluctuations, while the renormalization is linked to the fine tuning of the parameters needed to allow for nonlinear fluctuations at the macroscopic level. While this discussion is quite informal and general,  this picture can be understood rigorously in many specific cases, at least in the parabolic setting. 

As far as wave equations are concerned, it has been observed in~\cite{russo2, russo4, russo1, russo3} that SNLW with regularized additive space-time noise converges to a linear equation as the regularization is removed, essentially independently of the kind of (Lipschitz) nonlinearity considered. This hints to the fact that 
wave equations also need a certain fine tuning of the parameters in order to exhibit singular nonlinear fluctuations. 

All the theories we mentioned above are, however,  designed to handle parabolic equations and 
it is not a priori clear how to adapt them to handle dispersive or hyperbolic phenomena. 

Schr\"odinger and wave equations in two and three dimensions with multiplicative spatial white noise have been considered with spectral methods in~\cite{debussche_schrodinger_2018, debussche_solution_2017, gubinelli_semilinear_2018}. The spatial nature of the noise allowed the authors
 to use techniques similar to the parabolic setting~\cite{allez_continuous_2015}.
In our  paper~\cite{GKO}, we gave the first example of (non-trivial) weak universality in wave equations by showing that the renormalized SNLW on $\mathbb{T}^2$ describes a particular large scale limit of a random nonlinear wave equation with smooth noise. 
There, it was shown how, despite the hyperbolic setting, renormalization proceeds in a way quite similar to the parabolic one. 

In the present paper, we will also show that, despite the fact that notions such as  \emph{homogeneity} 
(fundamental in the theory of regularity structures) or Besov-H\"older regularity 
(similarly fundamental in the theory of  paracontrolled distributions in the parabolic setting) are less compelling in the hyperbolic setting, we can set up a paracontrolled analysis of
the SNLW equation~\eqref{SNLW1} 
which takes into account multilinear dispersive regularization and renormalization of resonant  stochastic terms via the introduction of certain random operators,
 replacing the  commutators standard in the parabolic paracontrolled approach of~\cite{GIP}. Let us note that the control of certain random operators already appeared in the analysis of discrete approximations to 
 the KPZ equation in~\cite{GP1}.

As an application of our results, we can identify the solutions to the SNLW equation~\eqref{SNLW1} 
constructed in Theorem \ref{THM:2-intro} as the universal limit of a certain class of random wave equations. 
Consider the following stochastic nonlinear wave equation
on $(\kk^{-1} \T)^3 \times \R_+$:
\begin{align}
\begin{cases}
\partial_t^2 w_{\kk}  + (1 - \Dl) w_{\kk} = f (w_{\kk})
   + a_{\kk}^{(0)} + a_{\kk}^{(1)} w_{\kk} +
   {\kk}^2 \eta_{\kk} \\
(w_{\kk}, \dt w_{\kk})|_{t = 0} = (0, 0), 
\end{cases}
\label{weq}
\end{align}

\noi
 where $\kk > 0$, where
$f:\R\to\R$ is an arbitrary bounded $C^3$-function with bounded derivatives and $\eta_{\kk}$ is a Gaussian noise which
is white in time (for simplicity) and smooth in space with finite range translation-invariant
correlations (see \eqref{eta-kappa} below for a precise definition). 
Here, $a_{\kk}^{(0)}$ and $a_{\kk}^{(1)}$
are parameters to be chosen later.
It is not difficult to show that this equation has global smooth solutions.
We think of this equation to be a \textit{microscopic} model of a given
space-time random field $w_{\kk}$ living on a large spatial domain $(\kk^{-1} \T)^3$ and subject to a very small random driving force of order $\kk^2 \ll 1$. For technical reasons we prefer to work in a bounded domain but the reader should think that the equation is set up in the full space and that the parameter $\kk$ sets the size of the random perturbation.  In order to focus on the large scale / long time behavior of the solutions to this equation,
 we perform an hyperbolic rescaling of the independent variables $(x, t)$ and introduce a new random field $u_\kk$ given by 
\begin{align}
u_{\kk} ( x, t) = \kk^{-2} w_{\kk} (\kk^{-1} x, \kk^{-1} t),
\qquad (x, t) \in \T^3 \times \R_+.
\label{scale1}
\end{align}

\noi
The following theorem gives a precise description of the limiting behavior of $u_\kk$ as $\kk\to 0$ and as the parameters $a_{\kk}^{(0)},a_{\kk}^{(1)}$ are tuned in order to have
\[
 f (w_{\kk})
   + a_{\kk}^{(0)} + a_{\kk}^{(1)} w_{\kk} \simeq w_\kk^2
\]

\noi
implying that $w_\kk =0$ is a solution of the unperturbed dynamics. 
Accordingly, the initial data in~\eqref{weq} are set to zero in order not to interfere with the analysis of the long time effect of the random perturbation.

\begin{theorem}\label{THM:weak}
There exists a \textup{(}time-dependent\textup{)} choice of the coefficients $a_{\kk}^{(0)},a_{\kk}^{(1)}=O(1)$ and 
an almost surely positive random time $T$ 
such that 
 the random field $u_\kk$ defined in \eqref{scale1} converges in probability to a well defined limit $u$
 in $C([0,T];H^{-\frac12 -\eps}(\T^3))$  as $\kk\to 0$. The limiting random field $u$ is 
 \textup{(}modulo a possible rescaling\textup{)} 
 a local-in-time solution to the  renormalized quadratic SNLW~\eqref{SNLW1} with the zero initial data.
\end{theorem}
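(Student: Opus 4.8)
The plan for Theorem~\ref{THM:weak} is to recast it, via the hyperbolic rescaling~\eqref{scale1}, as a small perturbation of the renormalized equation~\eqref{SNLW10} with regularization scale $\sim\kk^{-1}$, and then to apply a quantitative form of Theorem~\ref{THM:2-intro}. Since~\eqref{weq} has global classical solutions, $u_\kk$ is a bona fide smooth field and the following manipulations are legitimate. First I would derive the equation solved by $u_\kk$: inserting $w_\kk(\kk^{-1}x,\kk^{-1}t)=\kk^{2}u_\kk(x,t)$ into~\eqref{weq}, dividing by $\kk^{4}$, and Taylor-expanding $f$ at the origin, one obtains
\begin{align}
\dt^2 u_\kk + (1-\Dl)u_\kk
&=\kk^{-2}\big(a_\kk^{(1)}+f'(0)-1+\kk^2\big)u_\kk
+\tfrac12 f''(0)\,u_\kk^2
+\kk^{-4}\big(a_\kk^{(0)}+f(0)\big)
+\kk^{2}\mathcal N_\kk(u_\kk)
+\xi_\kk,
\label{eq:rescaled}
\end{align}
where $\xi_\kk(x,t):=\kk^{-2}\eta_\kk(\kk^{-1}x,\kk^{-1}t)$ and $\mathcal N_\kk(u_\kk):=\kk^{-6}\big(f(\kk^2u_\kk)-f(0)-f'(0)\kk^2u_\kk-\tfrac12 f''(0)\kk^4u_\kk^2\big)$ collects the cubic-and-higher Taylor remainder, so that $|\mathcal N_\kk(u_\kk)|\le\tfrac16\norm{f'''}_{L^\infty}|u_\kk|^3$. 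The prefactor of the microscopic forcing $\kk^{2}\eta_\kk$ (up to an absolute constant, which one fixes to match the noise normalization of~\eqref{SNLW1}) is tuned precisely so that, under~\eqref{scale1}, $\xi_\kk$ is a spatial mollification at scale $\kk$ of a space-time white noise $\xi$ on $\T^3\times\R_+$; in particular the space-time covariance of $\xi_\kk$ converges to that of $\xi$. I would then take $a_\kk^{(1)}:=1-f'(0)-\kk^2$, which cancels the otherwise $O(\kk^{-2})$ linear term while being $O(1)$, and --- provided $f''(0)\neq0$, the quadratic term being absent otherwise --- rescale $u=-\tfrac{2}{f''(0)}v$ so that $\tfrac12 f''(0)u_\kk^2$ becomes $-v_\kk^2$; this is the rescaling referred to in the statement.

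The one remaining free constant $c_\kk(t):=\kk^{-4}(a_\kk^{(0)}+f(0))$ is fixed by renormalization. Resolving~\eqref{eq:rescaled} through the paracontrolled ansatz $u_\kk=\Psi_\kk+(\text{more regular})$, with $\Psi_\kk$ the stochastic convolution driven by $\xi_\kk$, the square $u_\kk^2$ produces the divergent contraction $\E[\Psi_\kk^2]=\sigma_\kk(t)$, which in three dimensions grows like $\sigma_\kk(t)\sim t\,\kk^{-1}$ (the linear divergence $\sum_{|n|\les\kk^{-1}}\jb n^{-2}\sim\kk^{-1}$, with a linear-in-$t$ profile coming from the time integration). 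Taking $c_\kk(t):=-\tfrac12 f''(0)\sigma_\kk(t)$, equivalently $a_\kk^{(0)}(t)=-f(0)-\tfrac12 f''(0)\kk^{4}\sigma_\kk(t)=-f(0)+O(\kk^{3})=O(1)$, turns $\tfrac12 f''(0)u_\kk^2+c_\kk$ into $\tfrac12 f''(0)\big(u_\kk^2-\sigma_\kk\big)$, which --- after the rescaling $u=-\tfrac2{f''(0)}v$ --- is exactly the renormalized nonlinearity of~\eqref{SNLW10} at regularization level $N\sim\kk^{-1}$, with a divergent $\sigma_\kk^{v}\to\infty$. Since the renormalized stochastic objects built from the mollification $\xi_\kk$ converge to the same white-noise limits as those built from the sharp cutoff $\pi_N\xi$ (convergence of renormalized objects being insensitive to the choice of a reasonable regularization), the limiting solution coincides with the one furnished by Theorem~\ref{THM:2-intro}, and both $a_\kk^{(0)}$, $a_\kk^{(1)}$ are time-dependent and $O(1)$, as asserted.

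The main obstacle --- carrying essentially all of the work --- is the probabilistic convergence of the \emph{enhanced data}: the finite family of stochastic objects (iterated stochastic convolutions and their resonant products), together with the novel random operators introduced in this paper, must be shown to converge, with $\xi_\kk$ in place of $\xi$, to their white-noise counterparts, in probability in the Banach spaces underlying the local theory. By hypercontractivity on the inhomogeneous Wiener chaos it suffices to establish convergence of finitely many space-time moments, which in turn reduces to convergence of the relevant multilinear covariance kernels; these converge because the covariance of $\xi_\kk$ converges to that of $\xi$, uniformly on frequency-compacta and with uniform tail control. What makes this delicate is that the multilinear \emph{dispersive} bounds on the stochastic objects, and the mapping properties of the random operators, must be re-derived with $\xi_\kk$ in place of $\xi_N$ and shown to hold \emph{uniformly in} $\kk$: the careful multilinear exploitation of dispersion behind Theorem~\ref{THM:2-intro} is used here essentially verbatim, now tracking the harmless dependence on $\kk$, after which one passes to the limit.

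Granted this convergence, the deterministic part follows from the local well-posedness argument proving Theorem~\ref{THM:2-intro}, which yields a solution map continuous jointly in the initial data --- here the admissible datum $(0,0)\in\H^{s}$, $\tfrac14<s<\tfrac12$ --- and in the enhanced data, with an existence time $T$ depending only on (a lower-semicontinuous function of) the latter. One checks that the extra forcing $\kk^{2}\mathcal N_\kk(u_\kk)$ in~\eqref{eq:rescaled} is an admissible perturbation vanishing as $\kk\to0$: combining $|\mathcal N_\kk(u_\kk)|\le\tfrac16\norm{f'''}_{L^\infty}|u_\kk|^3$ with the embedding $L^1(\T^3)\embeds H^{-\frac32-\eps}(\T^3)$ and uniform-in-$\kk$ control of $\norm{u_\kk}_{L^3}$ (which diverges no faster than $\kk^{-1/2}$ up to logarithms), one gets $\kk^{2}\norm{u_\kk}_{L^3}^3\to0$, hence $\kk^{2}\mathcal N_\kk(u_\kk)\to0$ in $C([0,T];H^{-\frac32-\eps})$, so its Duhamel contribution vanishes in $C([0,T];H^{-\frac12-\eps})$; this is the only place the $C^{3}$-hypothesis is used. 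Finally, the continuity of the solution map, the in-probability convergence of the enhanced data, and a routine stopping-time argument give an almost surely positive random time $T$ and the convergence $u_\kk\to u$ in probability in $C([0,T];H^{-\frac12-\eps}(\T^3))$, where --- after the rescaling $u\mapsto-\tfrac2{f''(0)}u$ --- the limit $u$ is a local-in-time solution of the renormalized quadratic SNLW~\eqref{SNLW1} with zero initial data. The only genuinely hard input is the probabilistic convergence of the stochastic objects and random operators described above.
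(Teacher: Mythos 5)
Your structural setup is correct and matches the paper: the rescaling \eqref{scale1}, the identification of $\xi_\kk$ as a mollification at scale $\kk$ of space-time white noise, the choice $a_\kk^{(1)}=1-f'(0)-\kk^2$ and $a_\kk^{(0)}(t)=-f(0)-\tfrac12 f''(0)\kk^4\sigma_\kk(t)$ (which coincide with \eqref{WU5}), the rescaling $u\mapsto-\tfrac{2}{f''(0)}u$, and the reduction to proving convergence of the enhanced data set associated with $\xi_\kk$ and then invoking continuity of the Ito--Lyons map from Theorem \ref{THM:1}.

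The gap is in your handling of the Taylor remainder $R_\kk=O(\kk^2 u_\kk^3)$. You propose to estimate it via $|R_\kk|\lesssim\kk^2|u_\kk|^3$, the embedding $L^1(\T^3)\hookrightarrow H^{-\frac32-\eps}(\T^3)$, and $\|u_\kk\|_{L^3}=O(\kk^{-1/2+})$, concluding that $R_\kk\to 0$ in $C([0,T];H^{-\frac32-\eps})$. This is much too weak. The term $R_\kk$ is not an external forcing that can be discarded a posteriori: it depends on the unknown $u_\kk=\<1>_\kk-\<20>_\kk+X_\kk+Y_\kk$, and it must be absorbed into the contraction map for $(X_\kk,Y_\kk)$. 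It sits in the $Y_\kk$-equation, whose fixed point lives in $C([0,T];H^{s_2})$ with $s_2>\tfrac12$; by the energy estimate this requires $R_\kk$ to be controlled in $L^1_TH^{s_2-1}$ with $s_2-1>-\tfrac12$, a full derivative better than $H^{-\frac32-\eps}$. The paper's actual argument (\eqref{WU11}--\eqref{WU12}) does not proceed this way: it redefines the Strichartz component of the $Y$-space to a dual $s_2$-admissible pair $L^{\frac{4}{3+8\s}}_TL^{\frac{4}{3-4\s}}_x$ tailored to the cubic nonlinearity, and then uses the crucial fact that $\<1>_\kk$ and (by the paracontrolled structure of the $X_\kk$-equation) $X_\kk$ are effectively frequency-localized to $\{|n|\lesssim\kk^{-1}\}$, which yields $\kk^{\frac12+\eps}\<1>_\kk\in L^\infty_{T,x}$ and a Bernstein gain $\kk^{\frac23-\dl}X_\kk\in L^8_TL^{12/(3-4\s)}_x$; only after these dual-Strichartz and frequency-localization inputs does one obtain a factor $\kk^\dl$ small enough to close the fixed point \emph{uniformly in $\kk$} and to send the $R_\kk$-contribution to zero. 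Without these ingredients your argument neither establishes local well-posedness of the $\kk$-system on a $\kk$-independent time interval nor controls the $R_\kk$-contribution in the relevant norm, so the convergence argument does not close as written.
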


In fact, we will choose the coefficient $a_{\kk}^{(1)}$ depending only on $f$
and $\kk > 0$,
namely it is deterministic and independent of time.
See \eqref{WU5} below.

\begin{remark}\rm

The equation \eqref{SNLW1}
indeed corresponds
to the stochastic nonlinear Klein-Gordon equation.
The same results with  inessential modifications
also hold for
the stochastic nonlinear wave equation,
where we replace the left-hand side in \eqref{SNLW1}
by $\dt^2 u- \Dl u$.
In the following, we simply refer to \eqref{SNLW1} as 
the stochastic nonlinear wave equation.

\end{remark}

\subsection{The Da~Prato--Debussche trick}

Let us now describe the strategy which we used in~\cite{GKO,GKOT} to tackle the renormalization 
of the two-dimensional  SNLW equation:
 \begin{align*}
\dt^2 u  + (1 -  \Dl)  u =  -u^k  + \xi,  
\qquad (x, t) \in \T^2\times \R_+,
\end{align*}

\noi
for a generic monomial nonlinearity $u^k$. The first step is to introduce a new variable
\begin{align}
\label{decomp}
u = \Psi+v,
\end{align}

\noi
where $\Psi$ is the stochastic convolution given by 
\begin{align*}
\begin{split}
\Psi(t)
 := 
\I \xi (t)
& =  \int_{0}^t \frac{\sin ((t-t')\jb{\nb})}{\jb{\nb}}dW(t').
\end{split}
\end{align*}

\noi
Here, 
$W$ is a cylindrical Wiener process on  $L^2(\T^2)$, 
 and $\I = (\dt^2 +1 - \Dl)^{-1}$ is the Duhamel integral operator, 
corresponding to 
the forward fundamental solution to the linear wave equation,
 and  
$\jb{\nb}$ is the Fourier multiplier operator corresponding to the multiplier 
 $\jb{n} = (1 + |n|^2)^\frac{1}{2}$.
By a standard argument, 
it is easy to see that the stochastic convolution $\Psi$
almost surely has the regularity $ C(\R_+; W^{-\eps, \infty}(\T^2))$, $\eps > 0$.
Moreover,  it can be shown that for each $t> 0$, $\Psi (t)\notin L^2(\T^2)$ almost surely, thus creating  an issue in making sense of powers
$\Psi^k$ and a fortiori of the full nonlinearity $u^k$. The appropriate renormalization corresponds to replace the powers $\Psi(t)^k$ by the Wick powers $:\! \Psi(t)^k\!:$ of the stochastic convolution.
It then follows that the equation for the residual term $v = u - \Psi $ takes the form:
\begin{align}
(\dt^2 + 1 -  \Dl)  v =  -\sum_{\l=0}^k {k\choose \l} :\! \Psi^\l   \!: v^{k-\ell}. 
\label{SNLW2b}
\end{align}

\noi
By viewing 
$(u_0, u_1, \Psi, :\! \Psi^2  \!:, \dots, :\! \Psi^k  \!:\,)$
as a given {\it enhanced} data set, 
we studied the fixed point problem \eqref{SNLW2b} 
for $v$  via the Strichartz estimates\footnote{In fact, one may prove local well-posedness
of \eqref{SNLW2b} on $\T^2$ by Sobolev's inequality, i.e.~without  the Strichartz estimates.
See \cite{GKOT}.}
(see Lemma~\ref{LEM:Str} below) and  we proved that the renormalized SNLW on $\T^2$ is locally well-posed
for any integer  $k \geq 2$ and is globally well-posed when $k = 3$.
See also \cite{OT2, OOTz} for a related problem
on the deterministic (renormalized) NLW with random initial data.

\begin{remark}\label{REM:behavior1}\rm 
(i) 
In the field of stochastic parabolic  PDEs, 
the decomposition~\eqref{decomp} is usually referred to as the Da~Prato--Debussche trick \cite{DPD, DPD2}.
Note that such an idea also appears in 
McKean~\cite{McKean} and 
Bourgain \cite{BO96} in the context of (deterministic) dispersive PDEs with random initial data, preceding \cite{DPD}. See also Burq-Tzvetkov \cite{BT1}.

\smallskip

\noi
(ii) While $\Psi$ is not a function, it turns out that the residual part $v$ is a function 
of positive regularity.
Namely, the decomposition \eqref{decomp} shows that 
the solution $u$  ``behaves like'' the stochastic convolution
in the high-frequency regime (or equivalently on small scales).
\end{remark}

For our problem on the three-dimensional torus $\T^3$, 
 the Da~Prato--Debussche trick does not suffice.
Indeed,  the stochastic convolution $\Psi$ 
is less regular in three dimensions: $\Psi \in C(\R_+; W^{-\frac{1}{2}-\eps, \infty}(\T^3))$ 
almost surely for any $\eps > 0$.
See Lemma \ref{LEM:stoconv} below. 
This worse behavior also causes  the higher Wick powers $:\!\Psi^k\!:$ of $\Psi$ 
to become less and less regular. 
Correspondingly, the Cauchy problem with higher powers of the nonlinear term become more and more difficult to study. 
This is the reason that, in this paper, we limit ourselves to the first non-trivial situation,
namely the case $k=2$ which is already not amenable to be harnessed by the Da~Prato--Debussche trick. 
The main difficulty here is the lack of sufficient regularity for 
the residual term $v$ in order for the product $\Psi \cdot v$ to be well defined. 
In the next subsection, we will describe in detail the difficulty and the strategy  to overcome this difficulty.
In particular we will use ideas from the paracontrolled calculus introduced
by the first author (with Imkeller and Perkowski) \cite{GIP, CC}
and rewrite the equation \eqref{SNLW1} in an appropriate form,
 where the residual term $v$ is further decomposed and analyzed to expose 
 other multilinear stochastic objects of the stochastic convolution $\Psi$ which will be subsequently estimated via probabilistic methods (and via detailed analysis exploiting  their multilinear dispersive structures).\footnote{We also mention a recent preprint \cite{DNY1} by Deng, Nahmod, and Yue
 which appeared about one year after our current paper.  This remarkable work \cite{DNY1}
 elaborates the ideas 
 introduced by Gubinelli, Imkeller, and Perkowski~\cite{GIP} and
by 
  Bringmann \cite{Bringmann}, 
 where the basic objects are (frequency-localized) random {\it non-homogenous} linear solutions
 with random potentials, 
 thus incorporating the bad part of a nonlinearity, 
  and introduces the so-called random averaging operators, 
 which are well adapted to the dispersive setting.
See also a very recent preprint \cite{DNY2} by the same authors
which extends the method of random averaging operators in \cite{DNY1} 
and introduces the theory of random tensors.}

For further reference,
 let us now describe  the construction of $\Psi$ in the three-dimensional setting. 
 Let  $W$ denote a cylindrical Wiener process on $L^2(\T^3)$.\footnote{By convention, 
 we endow $\T^3$ with the normalized Lebesgue measure $(2\pi)^{-3} dx$.}
More precisely, by letting 
\begin{align}
e_n(x) = e^{ i n \cdot x}, \quad 
\Ld = \bigcup_{j=0}^{2} \Z^j\times \Z_{+}\times \{0\}^{2-j}, 
\quad \text{and}\quad \Ld_0 = \Ld \cup\{(0, 0, 0)\},
\label{index}
\end{align}

\noi
we have\footnote{Note that $\{e_0, \sqrt 2\cos( n\cdot x), 
\sqrt 2 \sin ( n \cdot x):
n \in \Ld \}$ forms an orthonormal basis of $L^2(\T^3)$ 
(endowed with the normalized Lebesgue measure) in the real-valued setting.}
\begin{align}
\begin{split}
 W(t) & =  \sum_{n\in \Z^3}
\be_n (t) e_n \\
& = \be_0(t) e_0 +  \sum_{n\in \Ld}
\Big[\sqrt 2 \Re(\be_n (t)) \cdot  \sqrt 2 \cos ( n \cdot x)\\
& \hphantom{XXXXX}
- \sqrt 2\Im(\be_n (t) )\cdot \sqrt 2 \sin ( n \cdot x)\Big],
\end{split}
\label{Wiener1}
\end{align}

\noi
where $\{ \be_n\}_{n \in \Ld_0}$ is a family of  mutually independent 
 complex-valued Brownian
motions\footnote{Here, we take $\be_0$ to be real-valued.
Moreover, we normalized $\be_n$ such that 
$\text{Var}(\be_n(t)) = t$. In particular, we have 
$\text{Var}(\Re \be_n(t)) = \text{Var}(\Im \be_n(t)) = \frac{t}{2}$
for $n \ne 0$.}
on a fixed probability space $(\O, \F, P)$
and $\be_{-n} := \cj{\be_n}$ for $n \in \Ld_0$.
It is easy to see that $W$ almost surely lies in 
$C^{\al}(\R_+;W^{-\frac{3}{2}-\eps, \infty}(\T^3))$
for any $\al < \frac 12$ and $\eps > 0$.
We then define the  stochastic convolution $\Psi$  
in the three-dimensional setting by 
\begin{align}
\begin{split}
\Psi(t) := 
\I \xi (t) = \sum_{n \in \mathbb{Z}^3} e_n 
   \int_0^t \frac{\sin ((t - t') \jb{ n })}{\jb{ n }} d \beta_n (t'). 
\end{split}
\label{sconv1}
\end{align}

\noi
See Lemma \ref{LEM:stoconv} below
for the regularity property of $\Psi$.

\subsection{The paracontrolled approach}
In the field of stochastic parabolic PDEs, 
there has been a significant progress
over the last five years.
In \cite{Hairer}, Hairer introduced the theory of regularity structures
and gave a precise meaning to 
certain (subcritical) singular stochastic parabolic PDEs,
which are classically ill-posed.
In particular, he showed that the stochastic quantization equation (SQE) on $\T^3$:
\begin{align}
\dt u -  \Dl u  = - u^3 + \infty \cdot u  + \xi
\label{SQE1}
\end{align}
\noi
is locally well-posed in an appropriate sense.

In \cite{CC}, Catellier and Chouk
proved an analogous local well-posedness result of SQE \eqref{SQE1} 
via the paracontrolled calculus approach of Imkeller, Perkowski,  and the first author~\cite{GIP}.
This result was extended to global well-posedness on the torus
(with a uniform-in-time bound) 
in a recent work \cite{MW1}
by Mourrat and Weber. 
More recently,  Hofmanov\'a and the first author~\cite{GH18} 
proved global  existence of unique solutions to \eqref{SQE2} on the Euclidean space $\R^3$.

In \cite{Hairer, CC}, 
the ``solution'' $u$ to \eqref{SQE1}
is constructed as a unique limit
of the following smoothed equation:
\begin{align}
\dt u_\dl -  \Dl u_\dl = - u_\dl^3 + C_\dl u_\dl  + \xi_\dl, 
\label{SQE2}
\end{align}

\noi
where $ \xi_\dl =  \rho_\dl * \xi$
denotes the smoothed noise
by a mollifier $\rho_\dl$.\footnote{In \cite{Hairer}, 
the mollifier $\rho_\dl$ is on both spatial and temporal variables, 
while it is only on spatial variables in~\cite{CC}.
In \cite{Kupi}, the author employs a different kind of regularization.}
Here, the uniqueness refers to the following;
while  the diverging constant $C_\dl$ depends
on the choice of the  mollifier $\rho_\dl$, 
the limit $u$ is independent of the choice of the mollifier.
As it is written, 
one may wonder if $u$ actually solves any equation in the end.
In fact, one can introduce a decomposition of $u$
analogous to~\eqref{decomp}
such that the residual terms satisfy a system
of PDEs in the pathwise sense.

In the following, we briefly describe this decomposition of $u$
in the paracontrolled setting.
For this purpose, let us define
the stochastic convolution 
$\<1>$  
by 
\begin{align*}
 \<1> = (\dt - \Dl)^{-1} \xi.
\end{align*}

\noi
Here,  we adopted Hairer's convention to denote the stochastic terms by trees;
the vertex ``\,$\<dot>$\,'' in $\<1>$ corresponds to the space-time white noise $\xi$,
while the edge denotes the Duhamel integral operator $(\dt - \Dl)^{-1}$.
On $\T^3$, $\<1>$ has spatial regularity\footnote{Hereafter, we use $a-$ 
(and $a+$) to denote $a- \eps$ (and $a+ \eps$, respectively)
for arbitrarily small $\eps > 0$.
If this notation appears in an estimate, 
then  an implicit constant 
is allowed to depend on $\eps> 0$ (and it usually diverges as $\eps \to 0$).
}
$-\frac 12 -$
and hence its powers do not make sense.
Denoting the renormalized cubic power ``$\<1>^3$''
by $\<3>$,\footnote{In the three-dimensional case, it is known that the ``renormalized'' cubic 
power $\<3>$ does not quite make sense as a distribution-valued function
of time due to a logarithmic divergence.
Note, however, that   $\<30>$ defined in \eqref{so2} is a well-defined function.}
we define
\begin{align}
  \<30> = (\dt - \Dl)^{-1} \<3>.
\label{so2}
\end{align}

\noi
Thanks to the parabolic smoothing of degree 2, 
it can be seen that $\<30>$ has the regularity $\frac 12- = 3(-\frac 12-) + 2$.
See for example \cite{MWX}.
We now write  $u$ as 
\begin{align}
u = \<1> - \<30> + v, 
\label{decomp2}
\end{align}

\noi
where $v$ is expected to be smoother than $\<30>$.
As mentioned in Remark \ref{REM:behavior1},
 the decomposition $u = \<1> + v$ in the Da~Prato--Debussche trick 
 postulates $u$ behaves like $\<1>$ on small scales.
This new  decomposition~\eqref{decomp2}
postulates that the second order fluctuation of $u$
is given by $-\<30>$.
By further splitting $v$
as $v = X + Y$
and introducing more stochastic objects (corresponding to Step~(i) in~\eqref{FIG:1} below),   
one arrives at a system of PDEs
for $X$ and $Y$.\footnote{Here, we are oversimplifying the argument.
In fact, this decomposition $v = X + Y$ is based on a paracontrolled ansatz,
postulating that $(\dt - \Dl)v$ is {\it paracontrolled} by $\<2>$.
See \cite{CC, MW1} for further details.
We will describe details of this step 
in studying SNLW.
See \eqref{SNLW5} and \eqref{SNLW6}
}
Note that 
the  stochastic objects thus introduced 
satisfy certain regularity properties in an almost sure manner.
Hence, by simply viewing them  as given deterministic data,
 we solve the resulting system for $X$ and $Y$
in a purely {\it deterministic} manner (corresponding to Step (ii) in \eqref{FIG:1}).

The following diagram summarizes the discussion above:
\begin{align} (u_0, \xi) \stackrel{\tiny \text{(i)}}{\longmapsto} \underbrace{(u_0, \<1>, \<2>, \<30>, \<31p>, \<32p>, \<22p>)}_{\text{enhanced data set}} 
\stackrel{\tiny \text{(ii)}}{\longmapsto}   (X, Y) \longmapsto  u = \<1>- \<30> + X+Y, 
\label{FIG:1}
\end{align}

\noi
where  the ill-posed solution map:
$(u_0, \xi)\mapsto u$
is factorized into two steps (i) a canonical lift, generating an enhanced data set, 
and (ii) a deterministic continuous solution map called the It\^o-Lyons map.
Note that stochastic analysis is needed only in Step (i).
%
%
%
%
%
The decomposition~\eqref{FIG:1} together with the equations
satisfied by $X$ and $Y$ 
provides a precise meaning to 
the limiting equation \eqref{SQE1}.\footnote{The term $\infty\cdot u$
in \eqref{SQE1}  is introduced so that all the terms
appearing in the system for $X$ and $Y$ are finite.
Here, $\infty$ is interpreted as a limit of the diverging constant $C_\dl$
in \eqref{SQE2}, which depends on the choice of a mollifier $\rho_\dl$
(but the limiting distribution $u$ is independent of the choice of the mollifier).
See also Remark \ref{REM:uniq} below.
}
See a nice exposition in the introduction of~\cite{MW1}.
In~\cite{Hairer}, a similar decomposition of $u$ holds at the level of regularity structures
adapted to \eqref{SQE1}.

\medskip

In the following, we describe a procedure based on a paracontrolled ansatz.
This  transforms~\eqref{SNLW1}
into a system of PDEs, 
which we can solve by standard deterministic tools.

\begin{remark}\rm
The theory of regularity structures introduced by Hairer \cite{Hairer}
provides a more complete framework
to study singular parabolic equations than the paracontrolled calculus
introduced in \cite{GIP}.
However, 
the theory of regularity structures
 is more rigid and we do not know how to handle
 stochastic wave equations in high dimensions at this point.
 In particular, 
 we don't know how to lift the Duhamel integral operator $\I$.

Moreover, in the parabolic setting, it is easy to predict a regularity of a product.
In the theory of regularity structures, 
this  provides an intuition of a resulting homogeneity 
of a product of two elements in a regularity structure.\footnote{More precisely, 
a product of elements in a model space $T$ of a given regularity structure $(A, T, G)$.}
In the current dispersive setting, 
we need to exploit a multilinear smoothing property
to calculate a regularity of a product
of two functions (under the Duhamel integral operator)
in a much more careful manner.
Hence, any implementation of regularity structures
to study dispersive PDEs also needs to incorporate
this extra smoothing via an explicit product structure, 
which seems to be highly non-trivial.
\end{remark}

We keep our discussion 
at a formal level
and discuss spatial regularities (= differentiability) of various objects
without worrying about precise spatial Sobolev spaces that they belong to.
We also use the following ``rules'':\footnote{In the remaining part of the paper, 
we will justify these rules.}

\begin{itemize}

\item A product of functions of regularities $s_1$ and $s_2$
is defined if $s_1 + s_2 > 0$.
When $s_1 > 0$ and $s_1 \geq s_2$, the resulting product has regularity $s_2$.

\item A product of stochastic objects (not depending on the unknown)
is always well defined, possibly with a renormalization.
The product of stochastic objects of regularities $s_1$ and $s_2$
has regularity $\min( s_1, s_2, s_1 + s_2)$.

\end{itemize}

As in the case of  SQE \eqref{SQE1}, 
 we use~$\<1>$
to denote the stochastic convolution $\Psi$ for the wave equation defined in \eqref{sconv1}:
\begin{align}
 \<1> : = 
 \Psi = \I  (\xi)
  =  \int_{0}^t \frac{\sin ((t-t')\jb{\nb})}{\jb{\nb}}dW(t').
\label{so3}
\end{align}

\noi
In this context, 
the vertex ``\,$\<dot>$\,'' in $\<1>$ corresponds to the space-time white noise $\xi$,
while the edge denotes the Duhamel integral operator $\I$.
Recalling that the spatial regularity $-\frac 32 -$ of the space-time white noise $\xi$, 
the smoothing under $\I$ 
shows that 
 $\<1>$ has (spatial) regularity $- \frac 12-$.  See Lemma \ref{LEM:stoconv}.

Next, we define the second order stochastic term $\<20>$ by 
\begin{align}
  \<20> := 
   \I (\<2>)
  =  \int_{0}^t \frac{\sin ((t-t')\jb{\nb})}{\jb{\nb}}\<2>(t') dt', 
\label{so4}
\end{align}

\noi
where $\<2>$ is the renormalized version of $\<1>^2$;
see Proposition 2.1 in \cite{GKO}
and Lemma~\ref{LEM:stoconv} below.
This corresponds to the second term in the Picard iteration scheme for \eqref{SNLW1}
(with the zero initial data).
Note that the Wick power $\<2>$ has regularity $-1- = 2(-\frac{1}{2}-)$.
If one proceeds with a ``parabolic thinking'',\footnote{Namely, 
if we only count the regularity of each of $\<1>$ in $\<2>$
and put them together with one degree of smoothing
from the Duhamel integral operator $\I$ {\it without} taking into account the product structure
and the oscillatory nature of the linear wave propagator.}
then one might expect that $\<20>$ has
regularity 
\begin{align}
0- = 2\big(-\tfrac 12 -\big) + 1,
\label{naive1}
\end{align}

\noi
where we gain one derivative from the Duhamel integral operator $\I$, 
in particular from $\jb{\nb}^{-1}$ in \eqref{so4}.
In fact, we exhibit  an extra $\frac 12$-smoothing
for $ \<20>$  
by exploiting the explicit product structure
and multilinear dispersion 
 in \eqref{so4}.

Before proceeding further, let us introduce some notations.
Given $N \in \N$, we define the (spatial) frequency projector $\pi_N$  by 
\begin{align}
\pi_N u := 
\sum_{ |n| \leq N}  \ft u (n)  \, e_n.
\label{pi}
\end{align}

\noi
We then define the truncated stochastic terms  $\<1>_N$ 
and   $\<20>_N$ by 
\begin{align}
\<1>_N := \pi_N \<1>
\qquad \text{and}
\qquad 
  \<20>_N := \I (\<2>_N), 
\label{so4a}
\end{align}

\noi
where $\<2>_N$ is the Wick power
defined by 
\begin{align}
\<2>_N   := (\<1>_N)^2 - \s_N
\label{so4b}
\end{align}

\noi
 with\footnote{In our spatially homogeneous setting,
the variance $\s_N(t)$ is independent of $x \in \T^3$.} 
\begin{align}
\begin{split}
\s_N(t) & = \E\big[ (\<1>_N(x, t))^2\big]
=  \sum_{|n|\leq N}
\int_0^t \bigg[\frac{\sin((t - t')\jb{n})}{\jb{n}} \bigg]^2 dt'\\
& = \sum_{|n|\leq N} \bigg\{\frac{t}{2\jb{n}^2} - \frac{\sin(2t \jb{n})}{4\jb{n}^3}\bigg\}
\sim t N.
\end{split}
\label{sigma1}
\end{align}

\noi
Note that we have 
$\<2> = \lim_{N \to \infty} \<2>_N$ in $C([0,T];W^{-1 - ,\infty}(\T^3))$ almost surely.
See Lemma~\ref{LEM:stoconv} below.

\begin{proposition}\label{PROP:sto1}
Let   $T >0$.
Then, 
$ \<20>_N $ converges to 
$\<20>$
in 
$C([0,T];W^{\frac 12 -\eps,\infty}(\T^3)) \cap C^1([0,T];W^{-1 -\eps,\infty}(\T^3))$
almost surely
for any  $\eps > 0$.
 In particular, we have
  \[\<20> \in C([0,T];W^{\frac 12 -\eps,\infty}(\T^3))
  \cap C^1([0,T];W^{-1 -\eps,\infty}(\T^3))\]
  
  \noi
  almost surely for any  $\eps > 0$.
\end{proposition}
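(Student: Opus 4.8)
The starting point is that $\<2>_N$, hence $\<20>_N=\I(\<2>_N)$, belongs to the second homogeneous Wiener chaos generated by $W$: indeed $(\pi_N\<1>)^2$ lives in the $0$th and $2$nd chaoses, and the subtraction of $\s_N$ removes exactly the $0$th-chaos part by the very definition \eqref{sigma1} of $\s_N$. By hypercontractivity on a fixed chaos, all $L^p(\O)$ moments are then controlled by second moments, so the proposition reduces to a family of deterministic estimates. I would pass to the spatial Fourier side: the $n$-th Fourier mode of $\<20>_N$ is a sum, over $n=n_1+n_2$ with $|n_1|,|n_2|\le N$, of an iterated stochastic integral in the Brownian motions $\be_{n_1},\be_{n_2}$ involving the three wave kernels $\frac{\sin((t-t')\jb{\nb})}{\jb{\nb}}$ (from $\I$) and two copies of $\frac{\sin((t'-s)\jb{\nb})}{\jb{\nb}}$ (from the two factors of $\<1>$ inside $\<2>$), with $\s_N$ having removed the mean of the zero mode. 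Taking expectations and using the isometry twice, the second moment of a Littlewood--Paley block $\pi_{\sim 2^j}\<20>_N(x,t)$ (the projection onto frequencies $|n|\sim 2^j$) --- which is independent of $x$ by spatial stationarity --- collapses to a deterministic sum over $|n|\sim 2^j$ and over $n=n_1+n_2$, $|n_j|\le N$, of $\jb{n_1}^{-2}\jb{n_2}^{-2}$ times a bounded time integral depending on $n,n_1,n_2$.

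The core estimate is to bound that time integral by $\min\big(T^2,\ \jb{\Phi}^{-2}\big)$, where $\Phi$ is one of the resonance functions $\jb{n}-\jb{n_1}-\jb{n_2}$ or $\jb{n}-|\jb{n_1}-\jb{n_2}|$. This follows by writing the leading part of each of the two covariances as $\jb{n_j}^{-2}\cos((t'-s)\jb{n_j})$ (the lower-order oscillatory remainders only help), converting the product of cosines into $\cos((t'-s)(\jb{n_1}\pm\jb{n_2}))$ via a product-to-sum identity, and estimating the resulting $t'$-integral against $\sin((t-t')\jb{n})$, whose modulus is $\les\jb{\Phi}^{-1}$ (by one integration by parts in $t'$, or by explicit evaluation). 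Hence the $j$th block's second moment is bounded by $\sum_{|n|\sim 2^j}\ \sum_{n=n_1+n_2}\jb{n_1}^{-2}\jb{n_2}^{-2}\jb{\Phi}^{-2}$, and what remains is a deterministic lattice summation. I would split this into a \emph{non-resonant} part, where $|\Phi|$ is bounded below by a fixed small power of $2^j$ so that $\jb{\Phi}^{-2}$ is a genuine gain and the sum is readily $\les 2^{-(1-\eps')j}$, and a \emph{resonant} part, where $\jb{n}$ is within $O(1)$ of $\jb{n_1}+\jb{n_2}$ or of $|\jb{n_1}-\jb{n_2}|$; in the latter regime, for each fixed $n$ the pair $(n_1,n_2)$ is confined to a thin neighborhood of a quadric, and a bound on the number of lattice points in such a neighborhood shows this part is also $\les 2^{-(1-\eps')j}$, for every $\eps'>0$ and uniformly in $N$. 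The resulting $2^{-j}$ improvement over the naive count \eqref{naive1} is exactly the extra $\tfrac12$ derivative of smoothing; carrying out the resonant analysis and the lattice counting is the step I expect to be the main obstacle, as it has no parabolic analogue.

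Granted the uniform block bound $\E\big[|\pi_{\sim 2^j}\<20>_N(x,t)|^2\big]\les 2^{-(1-\eps')j}$, the passage to $C([0,T];W^{\frac12-\eps,\infty}(\T^3))$ is routine: one repeats the computation for space and time increments $\pi_{\sim 2^j}\big(\<20>_N(x,t)-\<20>_N(y,\tau)\big)$ to gain a factor $(|x-y|+|t-\tau|)^{\theta}2^{\theta j}$; promotes the second moments to $p$th moments for all finite $p$ by hypercontractivity; and concludes via the embedding $B^{\frac12-\eps/2}_{p,p}(\T^3)\hookrightarrow W^{\frac12-\eps,\infty}(\T^3)$ for $p$ large, together with Kolmogorov's continuity criterion in the $t$ variable. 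For the $C^1([0,T];W^{-1-\eps,\infty}(\T^3))$ claim, one uses $\dt\I f(t)=\int_0^t\cos((t-t')\jb{\nb})f(t')\,dt'$, so differentiating in time merely removes the $\jb{\nb}^{-1}$ smoothing from the outermost kernel; running the same scheme with this modified kernel --- without even invoking the dispersive gain --- reduces to the regularity $-1-$ of $\<2>$ recorded in Lemma~\ref{LEM:stoconv}.

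Finally, for convergence rather than mere uniform boundedness, I would run all of the above for the difference $\<20>_N-\<20>_M$ with $N>M$: since $(\pi_N\<1>)^2-(\pi_M\<1>)^2$ only involves configurations with $\max(|n_1|,|n_2|)>M$, every summation acquires a factor $M^{-\theta}$ for some $\theta=\theta(\eps)>0$, provided one stays strictly below the threshold regularity (which is why $\eps>0$ is needed). Chebyshev's inequality and Borel--Cantelli along the dyadic subsequence $N=2^k$ then give almost sure convergence in the stated norms, and a standard monotone-approximation-and-continuity argument upgrades this to convergence of the full sequence; the limit is identified with $\<20>=\I(\<2>)$ using $\<2>_N\to\<2>$ in $C([0,T];W^{-1-,\infty}(\T^3))$ from Lemma~\ref{LEM:stoconv}.
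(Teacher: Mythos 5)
Your strategy is essentially that of the paper: reduce to second‐moment bounds on $\ft{\<20>}(n,t)$ via hypercontractivity (this is packaged in Lemma~\ref{LEM:reg} in the paper), pass to Fourier, isolate the resonance frequency $\kk_1(\bar n)=\eps_1\jb{n}-\eps_3\jb{n_1}-\eps_4\jb{n_2}$, split into time non‑resonant and (nearly) time resonant regimes, count lattice points in the resonant region, and obtain the $\tfrac12$‑gain; the $C^1$ claim via $\dt\I f=\int_0^t\cos((t-t')\jb{\nb})f\,dt'$ and the convergence via differences with a power of $N^{-\theta}$ are also exactly what the paper does. Two caveats, though. First, your ``core estimate'' claims the double time integral is $\les\jb{\Phi}^{-2}$, but a single integration by parts in the outer time variable (the manipulation you actually describe, and the one the paper carries out around~\eqref{SS1}) only gives $\jb{\kk_1(\bar n)}^{-1}$; the inner integral cannot be integrated by parts for free because it runs only up to $t_1$ and the boundary term recombines the phases. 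The paper lives with the single factor $\jb{\kk_1}^{-1}$ and closes the sum with the geometric counting, so the weaker bound suffices, but as stated $\jb{\Phi}^{-2}$ is not justified. Second, you correctly flag the resonant lattice count as the crux but leave it as a black box; in the paper this is the law‑of‑cosines computation in Cases 2--3 (estimates~\eqref{SS2}, \eqref{SS4a}--\eqref{SS5}), where $1-\cos\theta$ measures the deviation from parallel interaction and the near‑resonant set is confined to a cone of volume $\sim N_2^3\theta^2$, which exactly cancels the $\theta^{-2}$ loss from $\jb{\kk_1}^{-1}$. Making that step explicit is where the real work lies, and it is what distinguishes the dispersive situation from the parabolic one; the rest of your sketch is the paper's argument in slightly different clothing.
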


This proposition shows an extra $\frac 12$-smoothing
for $\<20>$ as compared to \eqref{naive1}.
This extra smoothing results from a multilinear interaction of waves
and is a manifestation of {\it dispersion} (at a multilinear level), 
which is  a key difference between  dispersive and parabolic equations.
In proving Proposition \ref{PROP:sto1}, 
we combine stochastic tools with multilinear dispersive analysis, 
in particular, carefully estimating
the  (nearly) time resonant and time non-resonant contributions.
See Remark \ref{REM:res}.
In the following, we will exploit
the dispersive nature of our problem in a crucial manner. 

We now write $u$ as 
\begin{align}
u = \<1> - \<20> + v.
\label{decomp3}
\end{align}

\noi
Then, it follows from \eqref{SNLW1} and \eqref{decomp3} that $v$ satisfies
\begin{align*}
(\dt^2  +1 -  \Dl)  v 
&  =  - (v + \<1> - \<20>)^2 + \<2>\notag \\
& = -  (v  - \<20>)^2 -2 v \<1> + 2 \<1>\<20>.
\end{align*}

\noi
At the second equality,  we performed the Wick renormalization: $\<1>^2 \rightsquigarrow \<2>$.
The last term  $\<1>\<20>$ 
has regularity $-\frac 12 -$, inheriting the worse regularity of $\<1>$.
Hence, we expect $v$ to have regularity at most $\frac 12- = (-\frac 12-) + 1$.
In particular, the product 
$v \<1>$ is not well defined since $(\frac 12-) + (-\frac 12-) < 0$.

In order to overcome this problem, 
we proceed with the paracontrolled calculus.
The main ingredients for the paracontrolled approach in the parabolic setting
are (i) a paracontrolled ansatz 
and (ii) commutator estimates.
For the wave equation, however, 
there seems to be no smoothing for a certain relevant commutator
(Remark \ref{REM:SQE})
and we need to introduce an alternative argument. 

Let us first recall
 the definition and basic properties of  paraproducts
 introduced by Bony~\cite{Bony}.
 See Section~\ref{SEC:2} and \cite{BCD, GIP} for further details.
Given $j \in \N\cup\{0\}$,  let $\P_{j}$
 be the (non-homogeneous) Littlewood-Paley projector
 onto the (spatial) frequencies $\{n \in \Z^3: |n|\sim 2^j\}$
 such that 
 \[ f = \sum_{j = 0}^\infty \P_jf.\]
 
 \noi
Given two functions $f$ and $g$ on $\T^3$
of regularities $s_1$ and $s_2$, 
we write the product $fg$ as
\begin{align}
fg & \hspace*{0.3mm}
= f\pl g + f \pe g + f \pg g\notag \\
& := \sum_{j < k-2} \P_{j} f \, \P_k g
+ \sum_{|j - k|  \leq 2} \P_{j} f\,  \P_k g
+ \sum_{k < j-2} \P_{j} f\,  \P_k g.
\label{para1}
\end{align}

\noi
The first term 
$f\pl g$ (and the third term $f\pg g$) is called the paraproduct of $g$ by $f$
(the paraproduct of $f$ by $g$, respectively)
and it is always well defined as a distribution
of regularity $\min(s_2, s_1+ s_2)$.
On the other hand, 
the resonant product $f \pe g$ is well defined in general 
only if $s_1 + s_2 > 0$.
In the following, we also use the notation $f\pge g := f\pg g + f\pe g$.

As in the study of SQE on $\T^3$, we now introduce our paracontrolled ansatz.
Namely, we suppose that $v = u - \<1> + \<20>$
can be decomposed as 
\begin{align}
v = X+ Y,
\label{decomp3a}
\end{align}

\noi
where $X$ and $Y$ satisfy
\begin{align}
 (\dt^2  +1 - \Dl) X  & = - 2 (X+Y-\<20>)\pl \<1>, \label{SNLW5}\\
 (\dt^2 +1  - \Dl) Y 
&  = - (X+Y-\<20>)^2 -  2(X+Y-\<20>)\pge \<1>.
\label{SNLW6}
\end{align}

\noi
Furthermore, 
we  postulate that both $X$ and $Y$ have positive regularities $s_1$ and $s_2$, respectively,
with $0 < s_1 < s_2$.

\begin{remark}\rm
We say that a distribution $f$ is paracontrolled (by a given distribution $g$)
if there exists $f'$ such that
\[ f= f' \pl g + h\]

\noi
where $h$ is a ``smoother'' remainder.
See Definition 3.6 in \cite{GIP}
for a precise definition.
Note, however, that the definition in \cite{GIP} 
is given in terms of the Besov-H\"older spaces $\mathcal{C}^s = B^s_{\infty, \infty}$
and is not necessarily useful for our dispersive problem.
Formally speaking, 
via the decomposition \eqref{decomp3a}
with \eqref{SNLW5} and the regularity assumption
$0 < s_1 < s_2$, 
we are postulating 
$(\dt^2  +1 - \Dl)v$ is paracontrolled by $\<1>$.

\end{remark}

From  the first equation  \eqref{SNLW5}, 
we see that
 $X$ has regularity $\frac 12- = (-\frac{1}{2}-) + 1$.
For now, let us ignore the resonant product 
$-2(X+Y-\<20>)\pe \<1>$ in \eqref{SNLW6}
and discuss the regularity of $Y$.
Recalling that $\<20>$ has regularity $\frac 12-$, 
we see that 
the paraproduct 
$- 2(X+Y-\<20>)\pg \<1>$ (with regularity $0-$)
as well as $- (X+Y-\<20>)^2 $ in \eqref{SNLW6} 
hints that $Y$ would have regularity $1- = (0-) + 1$.
This  is of course
provided that we can give a meaning
to the resonant product 
$-2(X+Y-\<20>)\pe \<1>$.
By postulating that $Y$ has regularity at least $\frac 12+\eps$ for some $\eps > 0$, 
we see that the resonant product 
$Y\pe \<1>$ makes sense as a distribution of regularity $s_2+( - \frac 12 -) > 0$
without any problem.
Furthermore, we can  make sense
of the following resonant product:
\begin{align}
\<21p> := \<20>\pe \<1>
\label{soX}
\end{align}

\noi
as a distribution of regularity $0- = (\frac{1}{2}-) + (-\frac 12 -)$
(without renormalization).

\begin{proposition}\label{PROP:sto2}
Let   $T >0$.
Then, 
$\<21p>_N := \<20>_N\pe \<1>_N$
 converges to $\<21p>$
 in 
$C([0,T];W^{ -\eps,\infty}(\T^3))$
almost surely
for any  $\eps > 0$.
 In particular, we have
  \[\<21p> \in C([0,T];W^{ -\eps,\infty}(\T^3))\]
  
  \noi
  almost surely  for any  $\eps > 0$.
\end{proposition}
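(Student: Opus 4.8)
The plan is to realize $\<21p>_N$ as a sum of elements of the first and third homogeneous Wiener chaos over the space-time white noise, to prove uniform-in-$N$ second-moment bounds on its dyadic Littlewood--Paley pieces with a small power gain, and then to upgrade these to the asserted almost sure convergence in $C([0,T];W^{-\eps,\infty})$ by the standard route: Wiener chaos hypercontractivity (Nelson's estimate) to pass from $L^2(\O)$ to $L^p(\O)$ for large $p$ in a fixed chaos, Bernstein's inequality, the spatial homogeneity of the noise (which makes $\E[|\P_j\<21p>_N(x,t)|^2]$ independent of $x$), and a Kolmogorov/Borel--Cantelli argument in the variables $t$ and $N$. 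Since $\<20>_N$ lives in the second chaos and $\<1>_N$ in the first, the product formula for multiple Wiener integrals splits the resonant product $\<20>_N\pe\<1>_N$ into a third-chaos component and a first-chaos (contraction) component, which I would estimate separately. The key quantitative goal is, for each dyadic $Q\sim 2^j$,
\[
\sup_{t\in[0,T]}\E\big[|\P_Q\<21p>_N(0,t)|^2\big]\lesssim 2^{\eps j}
\quad\text{uniformly in }N,
\]
together with the analogous bound for the difference $\<21p>_N-\<21p>_M$ carrying an extra factor $M^{-\theta}$.

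\textbf{The third-chaos component.} Here I would insert the explicit formulas \eqref{so4}, \eqref{so4a}, \eqref{so4b} and \eqref{sconv1} and write this component on the Fourier side as a triple stochastic integral indexed by $(n_1,n_2,n_3)$ with the resonance constraint $|n_1+n_2|\sim|n_3|$. By orthogonality of multiple Wiener integrals and by carrying out the three deterministic time integrations, its second moment becomes a sum over such $(n_1,n_2,n_3)$ with $|n_1+n_2+n_3|\sim 2^j$ of the weight $\jb{n_1+n_2}^{-2}\jb{n_1}^{-2}\jb{n_2}^{-2}\jb{n_3}^{-2}$ multiplied by the square of a time-integral factor. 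Expanding the sines from the wave propagators into exponentials, this time factor is dominated by a sum of terms each decaying like an inverse power of the modulations $\jb{\pm\jb{n_1}\pm\jb{n_2}\pm\jb{n_1+n_2}}$ (coming from $\I$ in \eqref{so4}) and $\jb{\pm\jb{n_1}\pm\jb{n_2}\pm\jb{n_3}}$ (coming from the resonant pairing with $\<1>_N$). One then runs the dichotomy used in the proof of Proposition \ref{PROP:sto1} (see Remark \ref{REM:res}): in the time non-resonant regime, where a modulation is $\gg 1$, the resulting gain is used to absorb the loss of the crude frequency count; in the nearly time resonant regime, where the modulation is $\lesssim 1$, one instead uses that $\pm\jb{n_1}\pm\jb{n_2}\pm\jb{n_3}=O(1)$ together with $|n_1+n_2|\sim|n_3|$ and $|n_1+n_2+n_3|\sim 2^j$ confines $(n_1,n_2,n_3)$ to a set whose size, weighted as above, is only $\lesssim 2^{\eps j}$. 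Summing gives the desired bound; restricting, in the difference, one of $n_1,n_2,n_3$ to be $\gtrsim M$ furnishes the extra $M^{-\theta}$.

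\textbf{The contraction term and conclusion.} The first-chaos component arises by pairing one factor of $\<1>_N$ inside $\<20>_N$ with the external $\<1>_N$; after performing the contraction it reduces to a single Wiener integral with an explicit kernel, whose second moment is a single sum over the surviving frequency $n_2$ of the squared kernel. One checks, again using the oscillation of the wave propagators in the time integrations, that this sum is $\lesssim 2^{\eps j}$ on the block $|n_2|\sim 2^j$; in particular no renormalization is needed, the superficially divergent sum being tamed by the time integrations. Hölder continuity in $t$ follows from the same estimates applied to $\<21p>_N(t)-\<21p>_N(t')$, and Kolmogorov's continuity criterion together with hypercontractivity and Borel--Cantelli then yield convergence of $\<21p>_N$ in $C([0,T];W^{-\eps,\infty}(\T^3))$ almost surely, for every $\eps>0$.

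\textbf{Main obstacle.} The delicate step is the multilinear dispersive bookkeeping for the third-chaos component: the purely ``parabolic'' count (ignoring the oscillations) only delivers regularity $-\tfrac12-$, so the $\tfrac12$-smoothing of $\<20>$ must genuinely be propagated through the resonant product, and the crux is a sharp enough counting estimate for the nearly time resonant frequency configurations where no modulation gain is available. This is precisely where dispersion, rather than a routine Wiener-chaos computation, enters.
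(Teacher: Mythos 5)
Your structural plan is sound and matches the paper's at the level of architecture: split $\<21p> = \<20>\pe\<1>$ according to Wiener chaos, estimate second moments of Fourier modes uniformly in $N$ with a small power gain, and upgrade to almost sure $C([0,T];W^{-\eps,\infty})$ convergence via hypercontractivity, spatial homogeneity, and Kolmogorov. You also correctly identify the main subtlety: the contraction (first--chaos) piece looks logarithmically divergent and the whole point is that no renormalization is needed because dispersion saves the day. And your estimate for the third--chaos piece by re-running the time non-resonant / nearly-resonant dichotomy of Proposition~\ref{PROP:sto1} would work, though it is more labour than needed: the paper instead uses Jensen's inequality to reduce the pairing count, reuses the already-proven bound $\E[|\ft{\<20>}(m,t)|^2]\le C(t)\jb{m}^{-4+}$ from Proposition~\ref{PROP:sto1} together with $\E[|\ft{\<1>}(n_3,t)|^2]\les\jb{n_3}^{-2}$ and the resonant convolution estimate Lemma~\ref{LEM:SUM}\,(ii). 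That shortcut avoids re-doing the dispersive counting inside the third chaos.

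The genuine gap is in your treatment of the contraction term. You write that ``one checks, again using the oscillation of the wave propagators in the time integrations, that this sum is $\les 2^{\eps j}$'' and conclude the logarithmic divergence is ``tamed by the time integrations.'' That mechanism does not close the argument. After expanding $\sin$ and $\cos$ into exponentials (see \eqref{Y5}--\eqref{Y6}), the relevant phase for the contraction piece is $\kk_3(\bar n)=\eps_1\jb{n+n_2}+\eps_3\jb{n_2}-\eps_5\jb{n}$, and in the sign configuration $\eps_1=-\eps_3$ with $|n|\ll|n_2|^\g$ one has $|\kk_3(\bar n)|=O(\jb{n})$, i.e.\ the modulation is $O(1)$ uniformly in the summation variable $n_2$. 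Integrating in time therefore gives no gain at all; the naive bound $\sum_{n_2}\jb{n+n_2}^{-1}\jb{n_2}^{-2}\sim\sum_{n_2}\jb{n_2}^{-3}$ genuinely diverges logarithmically, and no amount of time oscillation in $t'$ recovers it. What actually rescues the sum (the paper's Case~3 in Section~\ref{SEC:sto2}) is a \emph{spatial} cancellation: pairing $n_2\leftrightarrow -n_2$ and using the Taylor expansion $\jb{n\pm n_2}-\jb{n_2}=\pm\jb{n,n_2}/\jb{n_2}+\Theta^{\pm}(n,n_2)$ with $\Theta^{\pm}=O(\jb{n}^2/\jb{n_2})$ as in~\eqref{Y14}. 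The two sines in the symmetrized kernel then nearly cancel, giving an extra factor of $\jb{n_2}^{-1}$ (up to small powers) that makes the $n_2$-sum conditionally convergent. This symmetrization is a distinct idea from the modulation dichotomy and you would need to identify it explicitly; an appeal to generic time oscillation will not produce it. (Your formulation also conflates the Littlewood--Paley block on the output frequency with a dyadic block on $n_2$ and states the second moment of the contraction piece is a \emph{single} sum over $n_2$, whereas after squaring and taking the time $L^2$-norm of the deterministic kernel it is a double sum over $n_2,n_2'$; this needs tidying but is not the main issue.)

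Finally, two small omissions: the piece with $n_1+n_2=0$ (which is killed by the Wick renormalization inside $\<20>$ and by the resonant projection) should be disposed of explicitly; and the degenerate pairing $n_1=n_2=n=-n_3$ (a lower-order contribution to the third chaos) is most easily treated separately, since it is not covered by a blanket ``generic third-chaos'' count.
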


If one simply writes out  $\<21p>$,  
then there seems to be a logarithmically divergent contribution
(see \eqref{Y3}).
We can, however, exploit dispersion at a multilinear level
as in Proposition~\ref{PROP:sto1}
and show that  $\<21p>$ is indeed a well defined distribution.

Hence, it remains to give a meaning
to the resonant product $X\pe \<1>$.
Writing 
the equation~\eqref{SNLW5}
in the Duhamel formulation, 
we have
\begin{align}
 X  = S(t) (X_0, X_1) -2\I \big((X+Y-\<20>)\pl \<1>\big),
\label{X1}
 \end{align}
\noi
where $(X, \dt X )|_{t = 0} = (X_0, X_1) \in \H^{s_1}(\T^3)$ and $S(t)$ is the propagator for the linear wave equation
defined by 
\begin{equation*}
S(t)(u_0, u_1) := \cos(t\jb{\nb})u_0+\frac{\sin (t\jb{\nb})}{\jb{\nb}}u_1.
\end{equation*}

We need to make sense of  the  resonant product between $\<1>$
and each of the terms on the right-hand side of~\eqref{X1}.
The next lemma establishes a regularity property of 
  the resonant product: 
\begin{align*}
Z =Z(X_0, X_1):= \big(S(t) (X_0,   X_1)\big) \pe \<1>.
\end{align*}

\begin{lemma}\label{LEM:IV}
Given   $s_1 > 0$, 
let  $(X_0, X_1) \in \H^{s_1}(\T^3)$.
Then, 
given  $T >0$  and $\eps > 0$,
\begin{align*}
Z_N := \big(S(t) (X_0,   X_1)\big) \pe \<1>_N
\end{align*}

\noi
 converges to $Z = \big(S(t) (X_0,   X_1)\big) \pe \<1>$
 in $C([0,T];H^{ s_1 - \frac 12 -\eps}(\T^3))$ almost surely.
 In particular, we have
\begin{align*}
Z = \big(S(t) (X_0,   X_1)\big) \pe \<1> \in C([0,T];H^{ s_1 - \frac 12 -\eps}(\T^3))
\end{align*}

\noi
  almost surely  for any  $\eps > 0$.

\end{lemma}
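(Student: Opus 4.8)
The plan is to follow the standard route for a stochastic object lying in a fixed Wiener chaos: reduce to a second moment estimate, upgrade to $L^p(\O)$ by hypercontractivity, and conclude with Kolmogorov's continuity criterion. Write $w(t):=S(t)(X_0,X_1)$ for the free wave with data $(X_0,X_1)$, a deterministic function with $|\widehat w(m,t)|\le c_m:=|\widehat{X_0}(m)|+\jb m^{-1}|\widehat{X_1}(m)|$. Since $w$ is deterministic and $\<1>_N$ is Gaussian, $Z_N(t)$ --- as well as the frequency increment $Z_N(t)-Z_M(t)$ and the time increment $Z_N(t)-Z_N(t')$ --- belongs to the first Wiener chaos, so it suffices to bound $\E\big[\|Z_N(t)\|_{H^\sigma}^2\big]$ with $\sigma=s_1-\tfrac12-\eps$, together with these two increments. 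Applying hypercontractivity to each Fourier coefficient $\widehat{Z_N}(k,t)$ (itself a first-chaos element) promotes the $L^2(\O)$ bounds to $L^p(\O)$ for every finite $p$, and Kolmogorov's criterion then yields continuity in $t$ and, via a polynomial-in-$N$ rate together with Borel--Cantelli, the almost sure convergence $Z_N\to Z$ in $C([0,T];H^\sigma)$.

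The heart of the matter is the Fourier-side computation. By the definition of $\pe$,
\[
\widehat{Z_N}(k,t)=\sum_{\substack{m+n=k\\ |m|\sim|n|,\ |n|\le N}}\widehat w(m,t)\,\widehat{\<1>}(n,t),
\]
a sum over comparable dyadic blocks. The modes of $\<1>$ have diagonal covariance,
\[
\E\big[\widehat{\<1>}(n,t)\,\overline{\widehat{\<1>}(n',t)}\big]=\delta_{nn'}\Big(\tfrac{t}{2\jb n^{2}}-\tfrac{\sin(2t\jb n)}{4\jb n^{3}}\Big)\lesssim_T\delta_{nn'}\jb n^{-2}
\]
by \eqref{sigma1}, and it is precisely this independence of the modes --- not any dispersive input --- that drives the estimate. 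Only $n=n'$ survives, so
\[
\E\big[\|Z_N(t)\|_{H^\sigma}^2\big]\ \lesssim_T\ \sum_k\jb k^{2\sigma}\!\!\sum_{\substack{m+n=k\\ |m|\sim|n|,\ |n|\le N}}\!\! c_m^2\,\jb n^{-2}.
\]
The point is that this elementary bound already closes for \emph{every} $s_1>0$: summing in $k$ with $m$ fixed, and using that $|k|=|m+n|\lesssim|m|$ and $\jb n\sim\jb m$ on the support,
\[
\sum_{|n|\sim|m|}\jb{m+n}^{2\sigma}\jb n^{-2}\ \lesssim\ \jb m^{-2}\sum_{|l|\lesssim\jb m}\jb l^{2\sigma}\ \lesssim\ \jb m^{2\sigma+1},
\]
which is valid because $2\sigma+3=2s_1+2-2\eps>0$. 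Hence $\E[\|Z_N(t)\|_{H^\sigma}^2]\lesssim_T\sum_m c_m^2\jb m^{2\sigma+1}=\|c\|_{H^{s_1-\eps}}^2\lesssim\|(X_0,X_1)\|_{\H^{s_1}}^2$, uniformly in $N$ and $t\in[0,T]$; the loss $\eps$ is exactly the room needed for the $k$-sum to converge, and it is what makes the resonant product meaningful even when $s_1<\tfrac12$.

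The frequency and time increments are variants of the same computation. For $Z_N-Z_M$ with $N>M$, the restriction $M<|n|\le N$ forces $|m|\gtrsim M$, so $\E[\|Z_N(t)-Z_M(t)\|_{H^\sigma}^2]\lesssim_T\sum_{|m|\gtrsim M}c_m^2\jb m^{2\sigma+1}\lesssim_T M^{-2\eps}\|(X_0,X_1)\|_{\H^{s_1}}^2\to0$, so $(Z_N)$ is Cauchy with a polynomial rate. For the time increment I would telescope $\widehat{Z_N}(k,t)-\widehat{Z_N}(k,t')$ in its two factors, using $|\widehat w(m,t)-\widehat w(m,t')|\lesssim|t-t'|^{\theta}\jb m^{\theta}c_m$ and $\E[|\widehat{\<1>}(n,t)-\widehat{\<1>}(n,t')|^2]\lesssim_T|t-t'|^{2\theta}\jb n^{-2(1-\theta)}$ for $\theta\in[0,1]$; choosing $\theta\in(0,\eps)$, the extra factor $\jb m^{2\theta}\sim\jb n^{2\theta}$ is absorbed because $\|c\|_{H^{s_1-\eps+\theta}}\lesssim\|(X_0,X_1)\|_{\H^{s_1}}$, giving $\E[\|Z_N(t)-Z_N(t')\|_{H^\sigma}^2]\lesssim_T|t-t'|^{2\theta}\|(X_0,X_1)\|_{\H^{s_1}}^2$ uniformly in $N$ (and the analogue for $Z_N-Z_M$, with an extra gain $M^{-2\eps'}$). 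Passing to $L^p(\O)$ by hypercontractivity and invoking Kolmogorov's criterion then completes the proof; when $s_1>\tfrac12$ a routine argument identifies the limit $Z$ with the classical resonant product $\big(S(t)(X_0,X_1)\big)\pe\<1>$, which legitimizes the notation in general.

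The only delicate point is the $k$-summation in the core estimate: one must resist bounding $\jb k^{2\sigma}$ by $1$ when $\sigma<0$ --- that would cost a power of $\jb m$ and spuriously force $s_1\ge\tfrac12$ --- and instead keep the negative power of $\jb k$, exploiting that $|k|=|m+n|\lesssim|m|$ on the resonant support. Everything else, including the $\theta$-interpolation in the time-regularity estimate, is routine bookkeeping.
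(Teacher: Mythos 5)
Your proposal is correct and follows essentially the same route as the paper's proof: a mode-by-mode second-moment computation that exploits the diagonal covariance of $\widehat{\<1>}(n,\cdot)$ together with $|n_1+n_2|\lesssim|n_1|\sim|n_2|$ on the resonant support, an upgrade to $L^p(\Omega)$ via the Wiener chaos estimate (plus Minkowski), and Kolmogorov's continuity criterion. The only differences are stylistic and in thoroughness: you telescope the Fourier coefficients for the time increment where the paper telescopes at the level of the bilinear resonant product, and you spell out the frequency increment $Z_N-Z_M$ with its polynomial rate $M^{-2\eps}$, which the paper leaves implicit (referring to the general principle stated after Lemma~3.1).
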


See Section \ref{SEC:po} for the proof.

\begin{remark}\label{REM:IV}\rm
While the proof of Lemma \ref{LEM:IV}
is a straightforward application of the Wiener chaos estimate (Lemma \ref{LEM:hyp}), 
we point out that the set of probability one on which 
the conclusion of Lemma \ref{LEM:IV} holds depends
on the choice of deterministic initial data $(X_0, X_1) \in \H^{s_1}(\T^3)$.
This is analogous to the situation
for the recent study of
nonlinear dispersive PDEs
with randomized initial data \cite{BT1, BT3, LM, BOP2, POC, OP, OP2, BOP3},
where a set of probability one for local-in-time or global-in-time well-posedness
depends on the choice of deterministic initial data (to which a randomization is applied).
See \cite{BT3} for a further discussion.

\end{remark}

The main difficulty arises in making sense of 
the resonant product of the second term on the right-hand side of \eqref{X1}
and $\<1>$.
In the parabolic setting, 
it is at this step where
one would introduce commutators in \eqref{X1} and exploit their smoothing properties.
For our dispersive problem, however, such an argument does not seem to work.
See Remark~\ref{REM:SQE} below.
This is where our discussion  diverges from the parabolic case.

The main idea is to study the following {\it paracontrolled operator} $\If_{\pl}$
 and exhibit some smoothing property.
Given a function $w \in C(\R_+; H^{s_1}(\T^3) )$ with $0 < s_1 < \frac 12$, define
\begin{align}
\begin{split}
 \If_{\pl}(w) (t)
:\! & =  \I (w\pl \<1>)(t) 
 = \sum_{j < k - 2} \I (\P_j w \cdot \P_k \<1>)\\
& =       \sum_{n \in \Z^3}
e_n  \sum_{\substack{n =  n_1 +  n_2\\  |n_1| \ll |n_2|}}
\int_0^t \frac{\sin ((t - t') \jb{n})}{\jb{n}} 
\ft w(n_1, t')\,  \ft{ \<1>}(n_2, t') dt'.
\end{split}
\label{X2}
\end{align}

\noi
Here, $|n_1| \ll |n_2|$ signifies the paraproduct $\pl$
in the definition of $\If_{\pl}$.\footnote{For simplicity of the presentation, 
we use the less precise definitions of paracontrolled operators
in the remaining part of this introduction.
See \eqref{XX2a}, \eqref{A0a},  and \eqref{A0b}
for the precise definitions of 
the paracontrolled operators $ \If_{\pl}^{(1)}$ 
and $\If_{\pl, \pe}$.}
In the following, we decompose the paracontrolled operator
$\If_{\pl}$ into two pieces
and study them separately.

Fix small $\theta  > 0$.
Denoting by $n_1$ and $n_2$ the spatial frequencies
of $w$ and $\<1>$ as in \eqref{X2}, 
we further define $ \If_{\pl}^{(1)}$ and $ \If_{\pl}^{(2)}$ 
as the restrictions of $\If_{\pl}$ onto $\{ |n_1| \ges |n_2|^{\theta}\}$
and  $\{ |n_1| \ll |n_2|^{\theta}\}$.
More concretely, we set 
\begin{align}
 \If_{\pl}^{(1)}(w)(t) 
: = \sum_{n \in \Z^3}
e_n  \sum_{\substack{n =  n_1 +  n_2\\    |n_2|^\theta \les |n_1| \ll |n_2|}}
\int_0^t \frac{\sin ((t - t') \jb{n})}{\jb{n}} 
\ft w(n_1, t')\,  \ft{ \<1>}(n_2, t') dt'
\label{X3}
\end{align}

\noi
and $ \If_{\pl}^{(2)}(w)   := \If_{\pl}(w) - \If_{\pl}^{(1)}(w)$.
As for the first paracontrolled operator $ \If_{\pl}^{(1)}$, 
thanks to the lower bound  $|n_1| \ges |n_2|^\theta$ 
 and the positive regularity of $w$, 
we exhibit some smoothing property
such that 
 the resonant product 
$ \If_{\pl}^{(1)}(X+Y-\<20>) \pe \<1>$ is well defined.
See Lemma~\ref{LEM:sto3}
and Corollary \ref{COR:sto3a}.

Next, we discuss the second paracontrolled operator $ \If_{\pl}^{(2)}$.
Our goal is to make sense of the resonant product
$  \If_{\pl}^{(2)}(w)\pe\<1>$
for $w$ with spatial regularity $\frac 12-$.
Unlike $ \If_{\pl}^{(1)}$, 
the operator  $ \If_{\pl}^{(2)}$ does not seem to possess a smoothing property
and thus we need to directly study the  operator $\If_{\pl, \pe}$ defined by
\begin{align}
\begin{split}
\If_{\pl, \pe}(w) (t)
 & :=
  \If_{\pl}^{(2)}(w)\pe\<1>(t) \\
&  = \sum_{n \in \Z^3}e_n 
    \int_0^{t} 
\sum_{n_1 \in \Z^3}
\ft w(n_1, t') \A_{n, n_1} (t, t') dt', 
\end{split}
\label{X6}
\end{align}

\noi
where $\A_{n, n_1} (t, t')$ is given by 
\begin{align}
\A_{n, n_1} (t, t')
= \ind_{[0 , t]}(t') \sum_{\substack{n - n_1 =  n_2 + n_3\\ |n_1| \ll |n_2|^\theta \\ |n_1 + n_2|\sim |n_3|}}
 \frac{\sin ((t - t') \jb{n_1 + n_2 })}{\jb{ n_1 + n_2 }} 
   \ft{\<1>}(n_2, t')\,   \ft{\<1>}(n_3, t) .
\label{X7}
\end{align}

\noi
Here, 
the condition  $ |n_1 + n_2|\sim |n_3|$
is used to denote the  spectral multiplier corresponding 
to the resonant product $\pe$ in \eqref{X6}.
See \eqref{A0b} for a more precise  definition.

Given $n \in \Z^3$ and  $0 \leq t_2\leq t_1$, 
define $\s_{n}(t_1, t_2 )$ by 
\begin{align}
\begin{split}
\s_{n}(t_1, t_2 )   : \! & =
\E  \big[  \ft{\<1>}(n, t_1)  \,  \ft{\<1>}(-n, t_2) \big]
  =
   \int_0^{t_2} \frac{\sin ((t_1 - t')\jb{n} )}{\jb{n}}
   \frac{\sin ((t_2 - t') \jb{n})}{\jb{n}} d t'\\
&  =    \frac{\cos((t_1 - t_2)\jb{n}) }{2 \jb{n}^2} t_2
+  \frac{\sin ((t_1-t_2)  \jb{n})}{4\jb{n}^3 }
-  \frac{\sin ((t_1 +  t_2) \jb{n})}{4\jb{n}^3 }.
\end{split}
\label{sigma2}
\end{align}

\noi
Recall from the definition \eqref{so3} (also see \eqref{sconv1})
that $\ft{\<1>}(n_2, t')$ and $\ft{\<1>}(n_3, t)$
are uncorrelated unless $n_2+ n_3 = 0$,
i.e.~$n = n_1$.
This leads to the following decomposition of $\A_{n, n_1}$:
\begin{align}
\begin{split}
\A_{n, n_1} (t, t')
& = \ind_{[0 , t]}(t') \sum_{\substack{n - n_1 =  n_2 + n_3\\ |n_1| \ll |n_2|^\theta \\ |n_1 + n_2|\sim |n_3|}}
 \frac{\sin ((t - t') \jb{ n_1 + n_2 })}{\jb{ n_1 + n_2 }} \\
& \hphantom{XXXXXXXX}
\times (   \ft{\<1>}(n_2, t') \, \ft{\<1>}(n_3, t) - \ind_{n_2 + n_3 = 0} \cdot \s_{n_2}(t, t')) \\
& \hphantom{X}
+ \ind_{[0 , t]}(t') \cdot \ind_{n=n_1}\cdot \sum_{\substack{n_2 \in \Z^3 \\ |n| \ll |n_2|^\theta}}
 \frac{\sin ((t - t') \jb{n + n_2 })}{\jb{ n + n_2} } 
 \s_{n_2}(t, t') \\
& = :
\A^{(1)}_{n, n_1} (t, t')+    \A^{(2)}_{n, n_1} (t, t').
\end{split}
\label{X9}
\end{align}

\noi
The second term $\A^{(2)}_{n, n_1}$ is a (deterministic) ``counter term''
for the contribution in \eqref{X7} from $n_2 + n_3 = 0$.
For this term,  the condition $|n_1 + n_2|\sim |n_3|$
reduces to $|n + n_2|\sim |n_2|$ which is automatically satisfied
under $|n| \ll |n_2|^\theta$ for small $\theta > 0$.
See \eqref{Y12} and \eqref{Y13} below.

In view of \eqref{sigma2}, 
 the sum in $n_2$ for the second term $\A^{(2)}_{n, n_1}$ is not absolutely convergent.
Nonetheless, 
by  exploiting dispersion, 
we  
show the following boundedness
property
of the paracontrolled operator $\If_{\pl, \pe}$ defined in \eqref{X6}.
 Given Banach spaces $B_1$ and $B_2$, 
 we use $\L(B_1; B_2)$ to denote the space
 of bounded linear operators from $B_1$ to $B_2$.

\begin{proposition}\label{PROP:sto4}
Let  $ s_2 < 1$ and $T > 0$.
Then, there exists small $\theta = \theta (s_2) > 0$
and $\eps > 0$
such that
the paracontrolled operator $\If_{\pl, \pe}$ 
defined in \eqref{X6}
belongs to  the class:
\begin{align}
\L_1 = \L(C([0, T]; L^2(\T^3) ) \cap C^1([0,T];H^{-1 -\eps}(\T^3))
\, ;\, 
 C([0, T]; H^{s_2 -1}(\T^3) )),
\label{L2}
\end{align}
almost surely.
Moreover, 
if we define  the paracontrolled operator $\If_{\pl, \pe}^N$, $N \in \N$, 
by replacing 
$\<1>$ in \eqref{X6} and \eqref{X7}
with the truncated stochastic convolution $\<1>_N$ in \eqref{so4a}, 
then
the truncated paracontrolled operators $\If_{\pl, \pe}^N$ converge almost surely to $\If_{\pl, \pe}$ 
in $\L_1$.
\end{proposition}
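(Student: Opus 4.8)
The plan is to estimate $\If_{\pl, \pe}$ through the decomposition \eqref{X9} of its kernel, $\A_{n, n_1} = \A^{(1)}_{n, n_1} + \A^{(2)}_{n, n_1}$. Accordingly I would write $\If_{\pl, \pe} = \J_1 + \J_2$, where $\J_1(w)(t) = \sum_{n} e_n \int_0^t \sum_{n_1} \ft w(n_1, t')\, \A^{(1)}_{n, n_1}(t, t')\, dt'$ carries the genuinely random part and
\[
\J_2(w)(t) = \sum_{n \in \Z^3} e_n \int_0^t \ft w(n, t')\, \ind_{[0, t]}(t')\, M_n(t, t')\, dt', \qquad M_n(t, t') := \sum_{\substack{n_2 \in \Z^3 \\ |n| \ll |n_2|^{\ta}}} \frac{\sin((t - t') \jb{n + n_2})}{\jb{n + n_2}}\, \s_{n_2}(t, t')
\]
is the deterministic counterterm; here I used that $\A^{(2)}_{n, n_1}$ is supported on $\{n_1 = n\}$, so $\J_2$ is diagonal in the spatial frequency, and that under $|n| \ll |n_2|^{\ta}$ the condition $|n_1 + n_2| \sim |n_3|$ in \eqref{X7} is automatic. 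The operator $\J_1$ is an element of the second Wiener chaos (its kernel being the renormalized quadratic in \eqref{X9}), and I expect to control it by a Hilbert--Schmidt-type bound together with the Wiener chaos estimate, as in Proposition~\ref{PROP:sto1}, with no dispersive input needed. The operator $\J_2$ is deterministic, but its kernel $M_n$ is a conditionally convergent lattice sum that diverges logarithmically under absolute values; handling $M_n$ is the heart of the matter and the step where multilinear dispersion enters.

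For $\J_1$: by Cauchy--Schwarz in $(n_1, t')$, for each $t \in [0, T]$ one has $\| \J_1(w)(t) \|_{H^{s_2 - 1}}^2 \les_T R(t)\, \| w \|_{C([0, T]; L^2)}^2$ with $R(t) := \sum_{n} \jb{n}^{2(s_2 - 1)} \int_0^T \sum_{n_1} |\A^{(1)}_{n, n_1}(t, t')|^2\, dt'$, so it suffices to show $\sup_{t \in [0, T]} R(t) < \infty$ almost surely along with a modulus-of-continuity estimate in $t$; both follow from Lemma~\ref{LEM:hyp} once $\E[R(t)]$ and $\E[|R(t) - R(t_0)|]$ are bounded. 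Since $\A^{(1)}_{n, n_1}(t, t')$ lies in the second Wiener chaos, Wick's theorem expresses $\E[|\A^{(1)}_{n, n_1}(t, t')|^2]$ as a sum of products of two covariances of the form \eqref{sigma2}, each bounded up to a polynomial in $(t, t')$ by $\jb{\bullet}^{-2}$; using $|n_1| \ll |n_2|^{\ta}$ and the dyadic resonance condition $|n_1 + n_2| \sim |n - n_1 - n_2|$ from \eqref{X9} (which forces $|n_2| \ges |n|$), a power count gives, schematically,
\[
\E[R(t)] \les_T \sum_{n \in \Z^3} \jb{n}^{2(s_2 - 1)} \sum_{\substack{n_1, n_2 \in \Z^3, \, |n_1| \ll |n_2|^{\ta} \\ |n_1 + n_2| \sim |n - n_1 - n_2|}} \frac{1}{\jb{n_1 + n_2}^2\, \jb{n_2}^2\, \jb{n - n_1 - n_2}^2} \les_T \sum_{n \in \Z^3} \jb{n}^{2 s_2 - 5 + 3\ta},
\]
which converges for $s_2 < 1$ once $\ta = \ta(s_2) > 0$ is small. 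Replacing one copy of $\<1>$ by $\<1> - \<1>_N$ gains a factor $N^{-\delta}$ from localizing one of $|n_2|, |n - n_1 - n_2|$ to be $\ges N$, and Lemma~\ref{LEM:hyp} with Borel--Cantelli then yields $\J_1^N \to \J_1$ almost surely in $\L_1$.

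For $\J_2$: inserting the explicit formula \eqref{sigma2} for $\s_{n_2}(t, t')$ and expanding all trigonometric products into exponentials, $M_n(t, t')$ becomes a finite combination of oscillatory lattice sums of the form
\[
p(t, t') \sum_{\substack{n_2 \in \Z^3 \\ |n| \ll |n_2|^{\ta}}} \frac{e^{i t_* (\pm \jb{n + n_2} \pm \jb{n_2})}}{\jb{n + n_2}\, \jb{n_2}^{\kk}}, \qquad \kk \in \{2, 3\}, \ \ t_* \in \{t - t',\, t + t'\},
\]
with $p$ polynomial; the $\kk = 2$ sums are the log-divergent ones. I would split the $n_2$-sum, as in Remark~\ref{REM:res}, into a non-resonant regime $|\pm\jb{n+n_2}\pm\jb{n_2}| \ges \jb{n_2}^{\b_0}$ (which contains the $++,--$ signs, where the phase is $\sim \jb{n_2}$) and a near-resonant regime $|\jb{n+n_2}-\jb{n_2}| \ll \jb{n_2}^{\b_0}$, in which $n_2$ lies in a thin slab about $\{2 n \cdot n_2 + |n|^2 = 0\}$. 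Rather than estimating $M_n$ pointwise (which degenerates at $t' = t$), I would perform one integration by parts in $t'$ inside $\int_0^t \ft w(n, t')\, \ind_{[0,t]}(t')\, M_n(t, t')\, dt'$ — this is the role of the $C^1([0, T]; H^{-1 - \eps})$ component of the domain — picking up the boundary term $\ft w(n, t) \widetilde M_n(t, t)$ (the one at $t' = 0$ vanishes) and the bulk term $- \int_0^t \dt \ft w(n, t')\, \widetilde M_n(t, t')\, dt'$, where $\widetilde M_n(t, t') := \int_0^{t'} M_n(t, s)\, ds$. In the non-resonant regime $\widetilde M_n$ is now an absolutely convergent $n_2$-sum, the $s$-integration supplying the missing $\jb{n_2}^{-1}$; in the near-resonant regime one combines the thinness of the slab with $|\sin((t - s)(\jb{n + n_2} - \jb{n_2}))| \les |t - s|\, \jb{n_2}^{\b_0}$. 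Using also $|n_2| \ges |n|^{1/\ta}$, I expect $\sup_{0 \le t' \le t \le T} |\widetilde M_n(t, t')| \les_{\ta, T} \jb{n}^{-L}$ with $L = L(\ta)$ arbitrarily large, whence $\jb{n}^{s_2 - 1} | \int_0^t \ft w(n, t') \ind_{[0,t]}(t') M_n(t, t')\, dt' | \les_{\ta, T} \jb{n}^{s_2 - 1 - L} ( \sup_{[0,T]} |\ft w(n, \cdot)| + \sup_{[0,T]} |\dt \ft w(n, \cdot)| )$; summing over $n$ (possible since $L$ is large) gives $\| \J_2(w)(t) \|_{H^{s_2 - 1}} \les_{\ta, T} \| w \|_{C([0,T]; L^2)} + \| \dt w \|_{C([0,T]; H^{-1 - \eps})}$, uniformly in $t$, with continuity in $t$ following from the same bounds applied to $M_n(t, \cdot) - M_n(t_0, \cdot)$. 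Finally $\J_2^N$ restricts the $n_2$-sum to $|n_2| \le N$, so $\| \J_2 - \J_2^N \|_{\L_1} \les_{\ta, T} N^{-\delta}$; combining the two parts completes the proof of both assertions.

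The step I expect to be the main obstacle is the analysis of $M_n$: because the bare amplitude $\jb{n_2}^{-3}$ is only logarithmically non-summable, one must extract a small negative power of $\jb{n_2}$ from the oscillation $e^{i(t - t')(\pm\jb{n + n_2} \pm \jb{n_2})}$ — which requires a quantitative dispersive / oscillatory-sum estimate, not a crude bound — while keeping enough integrability in $t - t'$ to carry out the $t'$-integration against $\ft w$ (whence the integration by parts in time and the $C^1$-in-time assumption); and the near-resonant frequencies $\jb{n + n_2} \approx \jb{n_2}$, where no oscillatory gain is available, must be isolated and treated by a slab estimate. This is exactly the point where ``dispersion at a multilinear level'' replaces the paradifferential commutator estimates of the parabolic paracontrolled approach (cf.\ Remark~\ref{REM:SQE}).
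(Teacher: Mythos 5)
Your treatment of $\J_1$ (the contribution of $\A^{(1)}_{n,n_1}$) is essentially the paper's argument: a Hilbert--Schmidt (Cauchy--Schwarz) bound in the kernel variables, followed by a second--moment estimate on the kernel using Wick's theorem and Lemma~\ref{LEM:hyp}, then Kolmogorov's criterion for continuity in $t$ and the Borel--Cantelli step for the convergence of $\If_{\pl,\pe}^N$. The paper organizes the kernel norm dyadically in $|n_2|\sim N_2$ with a small weight $N_2^\delta$ (see the $\A(T)$-norm in the proof), but the mechanism is the same and your power count $\E[R(t)]\les_T\sum_n\jb{n}^{2s_2-5+3\theta}$ is exactly what makes $s_2<1$ the threshold.

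The treatment of $\J_2$, however, has a genuine gap. Your scheme is: split into ``non-resonant'' (phase $\ges\jb{n_2}^{\b_0}$, including the $++,--$ signs) and ``near-resonant'' ($|\jb{n+n_2}-\jb{n_2}|\ll\jb{n_2}^{\b_0}$), handle the first by one integration by parts in $t'$ (gain $\jb{n_2}^{-1}$ from the phase), and handle the second by combining a slab count with $|\sin((t-s)(\jb{n+n_2}-\jb{n_2}))|\les|t-s|\jb{n_2}^{\b_0}$. The first half is fine and matches the paper's $\A^{(3)}_n$ in \eqref{A4a} (this is precisely where the $C^1$-in-time assumption is used). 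But the slab-plus-smallness estimate for the near-resonant piece fails. Under the paracontrolled constraint $|n|\ll|n_2|^\theta$ the phase satisfies $|\jb{n+n_2}-\jb{n_2}|\les|n|\ll\jb{n_2}^\theta$, so if $\b_0\geq\theta$ the slab condition is vacuous and every $n_2$ with $|n_2|\sim N_2$ is ``near-resonant''; the bound $|\sin|\les|t-s||n|$ then gives a contribution $\sim T|n|$ from \emph{each} dyadic $N_2\ges|n|^{1/\theta}$, so the sum over $N_2$ diverges. If instead $0<\b_0<\theta$ the slab has $\sim N_2^{3+\b_0}/|n|$ lattice points for $N_2\les|n|^{1/\b_0}$ (and is again all of $\{|n_2|\sim N_2\}$ for larger $N_2$), and the contribution per dyadic block is $\sim T N_2^{2\b_0}/|n|$, which again is not summable over $N_2$. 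The reason is structural: with amplitude $\jb{n_2}^{-3}$ the slab measure and the sine smallness together provide \emph{no} net decay in $N_2$, only a rearrangement of the same total, so the logarithmic divergence of $\sum\jb{n_2}^{-3}$ is not removed. Integration by parts in $t'$ does not rescue this piece either, since the phase $\jb{n+n_2}-\jb{n_2}$ can be as small as $O(\jb{n_2}^{-1})$ for lattice points near the sphere $\{2n\cdot n_2+|n|^2=0\}$, so $1/|\text{phase}|$ is uncontrolled. Consequently your expectation $\sup_{t'\le t\le T}|\widetilde M_n(t,t')|\les\jb{n}^{-L}$ with $L$ large cannot be obtained this way (and is in any case stronger than the paper's bound, which only establishes $|\A^{(4)}_n(t,t')|\les\jb{n}^{\delta}$ for small $\delta>0$, compensated by the weight $\jb{n}^{s_2-1}$ with $s_2<1$).

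What is missing is the cancellation from pairing $n_2$ with $-n_2$. Writing $\jb{n\pm n_2}-\jb{n_2}=\pm\jb{n,n_2}/\jb{n_2}+\Theta^{\pm}(n,n_2)$ with $\Theta^\pm=O(\jb{n}^2/\jb{n_2})$ as in \eqref{Y14}, the sum $\sin((t-s)(\jb{n+n_2}-\jb{n_2}))+\sin((t-s)(\jb{n-n_2}-\jb{n_2}))$ collapses, by the mean value theorem, to something of size $O(|t-s|\,\jb{n}^2/\jb{n_2})$; this extra factor $\jb{n_2}^{-1}$ promotes the log-divergent $\sum_{n_2}\jb{n_2}^{-3}$ to an absolutely convergent $\jb{n}^2\sum_{n_2}\jb{n_2}^{-4}$, and the excess $\jb{n}^{2}$ (tempered by a $\min(1,\cdot)^{2\delta}$ interpolation) is absorbed using $|n|\ll|n_2|^\theta$. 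This symmetry $n_2\leftrightarrow-n_2$, which exploits the Wick/counterterm structure, is the dispersive mechanism here and there is no substitute for it in the slab argument; without it the counterterm remains genuinely divergent.
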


As in the proofs of Propositions \ref{PROP:sto1} and \ref{PROP:sto2}, 
dispersion plays an essential role in 
establishing the regularity property
of  the paracontrolled operator $\If_{\pl, \pe}$.
See Section \ref{SEC:po} for the proof.

Putting all together, we obtain the following system 
of PDEs for $X$ and $Y$:
\begin{align}
\begin{split}
 (\dt^2  + 1- \Dl) X  &  = - 2 (X+Y-\<20>)\pl \<1>, \\
 (\dt^2 +  1- \Dl) Y  
&   = - (X+Y-\<20>)^2 -  2(X+Y-\<20>)\pg \<1>\\
& \hphantom{X} 
  - 2 Y\pe \<1> + 2 \<21p>
 -2Z \\ 
& \hphantom{X} 
+  4 \If_{\pl}^{(1)}(X + Y - \<20>)\pe \<1> +4 \If_{\pl, \pe}(X + Y - \<20>), \\
(X,  \dt X, Y,     \dt Y)|_{t = 0} & = (X_0, X_1, Y_0, Y_1).
\end{split} 
 \label{SNLW7}
\end{align}

\noi
Let $s_1 < \frac 12$
and fix a pair of deterministic functions  $(X_0, X_1)$ in  $\H^{s_1}(\T^3)$.
The stochastic terms and operator appearing in the system \eqref{SNLW7} are 
\begin{align}
\<1>,\quad \<20>,\quad  \<21p>,
\quad  
Z = Z(X_0, X_1), 
\quad 
\text{and} 
\quad \If_{\pl, \pe}, 
\label{so5}
\end{align}

\noi
In  Lemma  \ref{LEM:stoconv} and Propositions \ref{PROP:sto1} and   \ref{PROP:sto2}, 
we  study the regularity properties of $\<1>$, $\<20>$, and  $\<21p>$, 
and show that 
each of these terms  belongs almost surely to 
$C(\R_+; W^{s, \infty}(\T^3))$
with the regularity $s$ shown in Table \ref{TAB:1}.
In  Lemma \ref{LEM:IV}, 
we prove that $Z \in C(\R_+; H^{s}(\T^3))$
almost surely for $s < s_1 - \frac{1}{2}$.
In Proposition~\ref{PROP:sto4}, 
we establish the almost sure boundedness property
of the paracontrolled operator
 $\If_{\pl, \pe}$
in an appropriate space.
We summarize these regularity properties
 in Table~\ref{TAB:1}.

\begin{table}[h]
  \begin{tabular}{|l||c|c|c|c|c|}
\hline
&  \rule{0mm}{5mm} 
\hspace*{2.5mm}  $\<1>$ \hspace*{3mm}
&\hspace*{3mm}  $\<20>$ \hspace*{4mm}
&\hspace*{3mm}  $\<21p>$ \hspace*{4mm}
&\hspace*{4.5mm}  $Z$ \hspace*{5.5mm} 
& $\If_{\pl, \pe}$ 
\\
\hline
\rule[-2.5mm]{0mm}{7mm}
$s$  
\hspace*{1.5mm} & $- \frac12 -\eps$ & $ \frac12 -\eps$ & $-\eps$  
& $s_1 - \frac 12 -\eps$
& \hspace*{2mm}$\L_1$ in \eqref{L2}\hspace*{2mm}
\\
\hline
  \end{tabular}

\vspace{4mm}
  \caption{The list of relevant stochastic terms
  with their regularities}
  \label{TAB:1}
\end{table}

\noi
In  Lemma \ref{LEM:sto3}
and Corollary \ref{COR:sto3a}, 
we also study 
the regularity property
of the paracontrolled operator $ \If_{\pl}^{(1)}$.

We now state our main result
on local well-posedness of the system \eqref{SNLW7}, 
viewing the terms and operators in \eqref{so5}
as {\it predefined deterministic} data
with certain regularity properties.

\begin{theorem}\label{THM:1}
Let $\frac 14 < s_1 < \frac 12 < s_2  \leq   s_1 +\tfrac 14$.
There exist small $\ta = \ta(s_2) > 0$
and 
$\eps = \eps(s_1, s_2, \ta) > 0$
such that 
if 
\begin{itemize}
\item   $\<1>, \, \<20>,$ and $\<21p>$,  are distributions
belonging
to 
$C(\R_+; W^{s, \infty}(\T^3))$
for $s$ as in Table~\ref{TAB:1}.
Moreover, we assume that 
\[ \<20>\in   C^1(\R_+;W^{-1 -\eps,\infty}(\T^3)),\]

\item 
$Z$ is a distribution 
belonging to $C(\R_+; H^{s_1 - \frac{1}{2}-\eps}(\T^3))$, 

\smallskip

\item 
 the operator 
 $\If_{\pl, \pe}$
belongs to  the class 
 $\L_1$ in \eqref{L2}, 

\end{itemize}

\noi
then
the system \eqref{SNLW7} is locally well-posed
in $\H^{s_1}(\T^3)\times \H^{s_2}(\T^3)$.
More precisely, 
given any $(X_0, X_1, Y_0, Y_1) \in \H^{s_1}(\T^3)\times \H^{s_2}(\T^3)$, 
there exists $T > 0$ 
such that there exists  a unique solution $(X, Y) $ to the system \eqref{SNLW7} on $[0, T]$
in the class 
\begin{align*}
 Z^{s_1, s_2}_T 
 & =  X^{s_1}_T\times  Y^{s_2}_T
 & \subset 
C([0, T];H^{s_1}(\T^3)\times H^{s_2}(\T^3))
\cap C^1([0, T]; H^{s_1-1}(\T^3)\times H^{s_2-1}(\T^3)), 
\end{align*}

\noi
depending  continuously 
on the enhanced data set:
\begin{align}
\Xi = \big(X_0, X_1, Y_0, Y_1, \<1>, \<20>,  \<21p>, Z, 
 \If_{\pl, \pe}\big)
\label{data1}
\end{align}

\noi
in the class:
\begin{align}
\begin{split}
\mathcal{X}^{s_1, s_2, \eps}_T
& = \H^{s_1}(\T^3) \times 
\H^{s_2}(\T^3)
\times 
C([0,T]; W^{-\frac 12 - \eps, \infty}(\T^3))\\
& \hphantom{X}
\times 
\big(C([0,T]; W^{\frac 12 - \eps, \infty}(\T^3))
\cap C^1([0,T];W^{-1 -\eps,\infty}(\T^3))\big)\\
& \hphantom{X}
\times
C([0,T]; W^{ - \eps, \infty}(\T^3))
 \times 
C([0, T]; H^{s_1 - \frac{1}{2}-\eps}(\T^3))
\times 
 \L_1.
\end{split}
\label{L3}
\end{align}

\noi
Here, 
$X^{s_1}_T$ and $Y^{s_2}_T$ are 
the energy spaces at the regularities $s_1$ and $s_2$
intersected with appropriate Strichartz spaces.
See \eqref{M0}  below.

\end{theorem}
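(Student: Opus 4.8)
The plan is to run a contraction mapping argument for the coupled system \eqref{SNLW7} in the product space $Z^{s_1,s_2}_T = X^{s_1}_T \times Y^{s_2}_T$, treating the stochastic data $\Xi$ in \eqref{data1} as fixed deterministic inputs with the regularities recorded in Table \ref{TAB:1}. First I would write the Duhamel formulation of \eqref{SNLW7}: $X = S(t)(X_0,X_1) - 2\I\big((X+Y-\<20>)\pl\<1>\big)$ and a similar expression for $Y$ with the full right-hand side of the second equation in \eqref{SNLW7} under $\I$. The map $\Gamma = (\Gamma_X, \Gamma_Y)$ sending $(X,Y)$ to these two Duhamel integrals is the object to be contracted. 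The energy/Strichartz spaces $X^{s_1}_T$ and $Y^{s_2}_T$ (definition \eqref{M0}) are chosen precisely so that $\I$ gains one derivative on the energy side and the Strichartz estimates (Lemma \ref{LEM:Str}) give the extra integrability needed to close the quadratic term $-(X+Y-\<20>)^2$; note $\<20>$ lands in $C([0,T];W^{\frac12-\eps,\infty})$, which is subcritical for the $H^{s_2}$ analysis since $s_2 \le s_1 + \frac14 < \frac34$ and the square of something of regularity $\frac12-$ still has regularity $\frac12-$ by the product rules.

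The key step is to check that every term on the right-hand side of the $Y$-equation maps $Z^{s_1,s_2}_T$ boundedly into a space to which $\I$ applies with a gain, landing back in $Y^{s_2}_T$. I would organize this term by term. The paraproduct $-2(X+Y-\<20>)\pg\<1>$ has regularity $\min(s_2,\tfrac12)+(-\tfrac12-) = 0-$ by the paraproduct rule, so after $\I$ it is $1- \supset s_2$. The resonant piece $-2Y\pe\<1>$ is legitimate because $s_2 > \frac12$ forces $s_2 + (-\frac12-) > 0$; the pieces $2\<21p>$ and $-2Z$ are directly supplied with regularities $-\eps$ and $s_1-\frac12-\eps$ respectively, both $> s_2 - 1$, hence fine after one smoothing. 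The term $4\If_{\pl}^{(1)}(X+Y-\<20>)\pe\<1>$ is handled by Lemma \ref{LEM:sto3} / Corollary \ref{COR:sto3a}: the lower frequency cutoff $|n_1| \gtrsim |n_2|^\theta$ combined with $s_1 > 0$ buys enough smoothing that the resonant product against $\<1>$ is defined with positive regularity. Finally $4\If_{\pl,\pe}(X+Y-\<20>)$ is exactly the operator controlled by Proposition \ref{PROP:sto4}: it maps $C([0,T];L^2)\cap C^1([0,T];H^{-1-\eps})$ into $C([0,T];H^{s_2-1})$, which after $\I$ is $H^{s_2}$ — this is where the operator-valued data in $\L_1$ is used, and where $\theta = \theta(s_2)$ and the smallness of $\eps$ get pinned down. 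For the $X$-equation the only term is the paraproduct $-2(X+Y-\<20>)\pl\<1>$, of regularity $\min(s_1,\frac12,s_2)+(-\frac12-) = -\frac12-$ after using that the lowest of $X,Y,\<20>$ has regularity $s_1 > \frac14 > 0$, so $X$ lands in $\frac12- \supset s_1$ after $\I$; note all these estimates require feeding $X+Y-\<20>$ into $\If_{\pl,\pe}$ only through its $L^2 \cap C^1 H^{-1-\eps}$ norm, which is controlled by the $Y^{s_2}_T$ and $X^{s_1}_T$ norms together with the assumed regularity of $\<20>$ (here one uses $s_1 > 0$, so $X,Y \in C([0,T];L^2)$, and $\<20> \in C^1([0,T];W^{-1-\eps,\infty})$).

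The same bilinear bounds applied to differences $(X_1 - X_2, Y_1 - Y_2)$ give a contraction for $T$ small, since every estimate comes with a positive power of $T$ from the time integration in $\I$ (the Strichartz and energy estimates on a short interval, and the explicit $\int_0^t$ in \eqref{X2}, \eqref{X6}); the linear-in-the-unknown operators $\If_{\pl}^{(1)}$ and $\If_{\pl,\pe}$ likewise carry a factor $T^{\alpha}$. Continuous dependence on $\Xi \in \mathcal{X}^{s_1,s_2,\eps}_T$ then follows from the multilinearity of all the estimates in the data, exactly as in the standard Ito–Lyons picture of Figure \ref{FIG:1}. The main obstacle is the single term $\If_{\pl,\pe}(X+Y-\<20>)$: unlike all the others it cannot be estimated by a deterministic product rule or commutator, and one must invoke Proposition \ref{PROP:sto4} in the precise form stated — so the real content of the theorem is verifying that the space $C([0,T];L^2)\cap C^1([0,T];H^{-1-\eps})$ demanded by \eqref{L2} is indeed the space in which $X+Y-\<20>$ naturally lives given $0 < s_1$, and that the target $H^{s_2-1}$ is weak enough to permit one application of $\I$ back into $Y^{s_2}_T$; this is what forces the constraint $s_2 < 1$ and the coupling $s_2 \le s_1 + \frac14$.
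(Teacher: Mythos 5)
Your overall strategy is the same as the paper's Section 6: a contraction mapping argument for $\Phi = (\Phi_1,\Phi_2)$ in $Z^{s_1,s_2}_T = X^{s_1}_T\times Y^{s_2}_T$, with each source term estimated via the Strichartz estimates (Lemma \ref{LEM:Str}), the paraproduct estimates (Lemma \ref{LEM:para}), the fractional Leibniz rule (Lemma \ref{LEM:bilin}), and the assumed regularities of the enhanced data $\Xi$. Your term-by-term organization of the $Y$-equation is essentially the paper's \eqref{M1}--\eqref{M4}, and you correctly single out $\If_{\pl,\pe}$ as the term that has no deterministic product or commutator bound and must be fed through Proposition \ref{PROP:sto4}.

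There is, however, a genuine misattribution that would cause confusion if one tried to write out the estimates: you claim that the constraint $s_2 \le s_1 + \tfrac14$ is ``forced'' by the paracontrolled operator $\If_{\pl,\pe}$. It is not --- that operator only requires the far weaker condition $s_2 < 1$ (automatic here, since $s_2 \le s_1 + \tfrac14 < \tfrac34$). The binding constraint comes from the purely deterministic quadratic term $(X+Y-\<20>)^2$: one must control $\|\jb{\nb}^{s_2-\frac12}(X+Y-\<20>)^2\|_{L^{4/3}_{T,x}}$ after the Strichartz estimate with the dual $\frac12$-admissible pair $(\tfrac43,\tfrac43)$, and the piece $\|\jb{\nb}^{s_2-\frac12}X\|_{L^8_T L^{8/3}_x}$ is only bounded by the $X^{s_1}_T$-norm $\|X\|_{L^8_T W^{s_1-\frac14,8/3}_x}$ if $s_2 - \tfrac12 \le s_1 - \tfrac14$, i.e.\ $s_2 \le s_1 + \tfrac14$ --- this is exactly \eqref{M4}. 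Your heuristic ``the square of something of regularity $\frac12-$ still has regularity $\frac12-$'' is true for $\<20>\in W^{\frac12-\eps,\infty}$ but not for $X\in H^{s_1}$: an $L^2$-based Sobolev square $H^{s_1}\cdot H^{s_1}$ at $s_1<\tfrac12$ lands at regularity $2s_1-\tfrac32<0$, so no energy estimate alone can close this term; the Strichartz spaces built into $X^{s_1}_T$ and $Y^{s_2}_T$ are what rescue it. Relatedly, your regularity count $\min(s_2,\tfrac12)+(-\tfrac12-)$ for $(X+Y-\<20>)\pg\<1>$ should read $s_1 + (-\tfrac12-\eps)$, since $X$ at regularity $s_1$ is the least regular piece of $X+Y-\<20>$; the paper's requirement in \eqref{M2} is precisely $s_2-1 < s_1-\tfrac12-2\eps$, i.e.\ $s_2 < s_1 + \tfrac12$, which is again non-binding once $s_2 \le s_1 + \tfrac14$.
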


Theorem \ref{THM:1} states local well-posedness of the system
\eqref{SNLW7} when we view 
the enhanced data set $\<1>$,  $\<20>$,  $\<21p>$,
$Z$, 
and $\If_{\pl, \pe}$ 
as given deterministic distributions or operator.
As such, the proof of Theorem \ref{THM:1} is entirely deterministic.
By writing~\eqref{SNLW7} in the Duhamel formulation
\begin{align}
\begin{split}
 X (t)  &  = \Phi_1(X, Y)(t) \\
 & := S(t)(X_0, X_1) - 2 \int_0^t \frac{\sin((t-t')\jb{\nb})}{\jb{\nb}} 
 \big[(X+Y - \<20>)\pl \<1>\big](t')dt',  \\
Y  (t) &   = \Phi_2(X, Y)(t) \\
 & :=
S(t)(Y_0, Y_1) -
 \int_0^t \frac{\sin((t-t')\jb{\nb})}{\jb{\nb}} 
\Big[(X+Y-\<20>)^2 +  2(X+Y-\<20>)\pg \<1>\\
& 
\hspace{25.5mm}
+ 2 Y\pe \<1> - 2 \<21p>
 + 2Z \\
& \hspace{25.5mm}
 -4 \If_{\pl}^{(1)}(X + Y - \<20>)\pe \<1> -4\If_{\pl, \pe}(X + Y - \<20>)\Big] (t') dt',
\end{split}
 \label{SNLW8}
\end{align}

\noi
we show  that the map $\Phi = (\Phi_1, \Phi_2)$
is a contraction on a closed ball in 
$Z^{s_1, s_2}_T$
for sufficiently small $T>0$
which  depends only on 
the $\mathcal{X}^{s_1, s_2, \eps}_T$-norm 
of the enhanced data set $\Xi$ in  \eqref{data1}.
The main tools are (i) the Strichartz estimates
for the wave equations (Lemma~\ref{LEM:Str}) 
and (ii)~the paraproduct estimates (Lemma~\ref{LEM:para}).
See Section \ref{SEC:proof1} for details.

\smallskip

Finally, let us discuss the consequence of Theorem \ref{THM:1}
on the original SNLW \eqref{SNLW1}.

\begin{proof}[Proof of Theorem \ref{THM:2-intro}]

Let $\frac 14 <  s < \frac 12$.
Given  $(u_0, u_1) \in \H^{s}(\T^3)$, 
let $(X_0, X_1, Y_0, Y_1) = (u_0, u_1, 0, 0)$.
For each $N \in \N$, 
we construct the enhanced data set associated with the truncated noise $\xi_N = \pi_N \xi$:
\begin{align*}
\Xi_N
= \big(u_0, u_1, 0, 0, \<1>_N, \<20>_N,  \<21p>_N, 
Z_N, 
 \If_{\pl, \pe}^N\big).
\end{align*}

\noi
Here, 
 $\<1>_N$, $\<20>_N$,  $\<21p>_N$,
 and  
$ \If_{\pl, \pe}^N$
are as in \eqref{so4a} and Propositions \ref{PROP:sto2} and \ref{PROP:sto4}, 
while we set 
$Z_N = Z_N(u_0, u_1) = S(t)(u_0, u_1)\pe \<1>_N$.
Let $(X_N, Y_N)$ be the unique local-in-time
solution to the system \eqref{SNLW7}
with  the enhanced data set $\Xi_N$
and  define $u_N$ by
\begin{align}
 u_N =  \<1>_N - \<20>_N + X_N + Y_N.
\label{decomp4}
 \end{align}

\noi
Then, by reversing the discussion above with \eqref{so4b}, 
we see that $u_N$ satisfies the renormalized SNLW \eqref{SNLW10} \emph{provided}  $\s_N$ is chosen as in \eqref{sigma1}.

From 
Lemma \ref{LEM:stoconv},
Propositions \ref{PROP:sto1},  \ref{PROP:sto2}, 
Lemma \ref{LEM:IV}, 
Corollary \ref{COR:sto3a}, 
and
Proposition \ref{PROP:sto4}, 
we see that $\Xi_N$ converges 
almost surely to 
\begin{align}
\Xi =  \big(u_0, u_1, 0, 0, \<1>, \<20>,  \<21p>,  
S(t)(u_0, u_1)\pe \<1>, 
\If_{\pl, \pe}\big)
\label{so7}
\end{align}

\noi
in the $\mathcal{X}^{s, \frac 12 +\eps, \eps}_1$-topology
for some small $\eps > 0$.
Then, 
the (pathwise) continuous dependence
of the solution map for the system \eqref{SNLW7}
on the enhanced data set in $\mathcal{X}^{s, \frac 12 +\eps, \eps}_1$
implies 
that 

\medskip

\begin{itemize}
\item
the (random) local existence time  $T = T(\o)$ 
depicted in Theorem \ref{THM:1} can be chosen uniformly 
for $\big\{(X_N, Y_N) \big\}_{N \in \N}$ and $(X, Y)$.
Here,  $(X, Y)$ is the unique solution to~\eqref{SNLW7}
with the enhanced data $\Xi$ in \eqref{so7}.

\smallskip

\item 
the solution $u_N$ to the renormalized SNLW \eqref{SNLW10}
defined  in \eqref{decomp4}
converges almost surely to  $u$ 
in $C([0, T]; H^{-\frac 12 -\eps} (\T^3))$, 
where $u$ is 
 given by 
\begin{align}
u = \<1> - \<20> + X + Y.
\label{so8}
\end{align}

\end{itemize}

\noi 
This proves Theorem~\ref{THM:2-intro} under the condition that $\s_N$ is chosen as described in  
\eqref{sigma1}.
\end{proof}

\begin{remark}\rm
As we pointed out in Remark \ref{REM:IV}, 
the set $\Si$ of probability one
on which Theorem \ref{THM:2-intro} holds
depends
on the choice of (deterministic) initial data $(u_0, u_1) \in \H^s(\T^3)$
due to Lemma~\ref{LEM:IV}.
If  we assume a slightly higher regularity, 
namely, if we work with $(u_0, u_1) \in \H^{s}(\T^3)$ for some $s > \frac 12$, 
we can choose the set $\Si$ of probability one, independent of $(u_0, u_1) \in \H^s(\T^3)$, 
by simply setting
$(X_0, X_1, Y_0, Y_1) = (0, 0, u_0, u_1)$,
which avoids the use of Lemma \ref{LEM:IV}.
\end{remark}

\begin{remark} \label{REM:uniq}\rm
Given  $\rho \in C^\infty_c(\R^3)$ 
with   $\int_{\R^3} \rho(x) dx = 1$
and $\supp \rho \subset [-\frac 12, \frac 12)^3\simeq  \T^3$, 
we define a smooth mollifier $\rho_\dl$, $0< \dl \leq 1$, 
by setting
\begin{align}
\rho_\dl(x) = \dl^{-3} \rho(\dl^{-1} x).
\label{molli2}
\end{align}

\noi
We also say that such $\rho$ is a mollification kernel. Then, the same argument leading to Theorem~\ref{THM:2-intro} can be used to prove the following convergence and uniqueness statement.
See~\cite{Hairer, CC}.
Given $\frac 14 < s < \frac 12$, 
let $(u_0, u_1) \in \H^{s}(\T^3)$.
Let $ \xi_\dl =  \rho_\dl * \xi$
be the smoothed noise
by a smooth mollifier $\rho_\dl$.
Then, for any $0< \dl \leq 1$, 
there exists $C_\dl=C_\dl(t, \rho)$
such that 
the solution $u_\dl$ to the following smoothed SNLW:
\begin{align*}
\begin{cases}
\dt^2 u_\dl + (1-  \Dl)  u_\dl  =  - u_\dl^2  + C_\dl+ \xi_\dl\\
(u_\dl, \dt u_\dl)|_{t = 0} = (u_0, u_1)
\end{cases}
\end{align*}

\noi
converges in probability  to some distribution $u$  in $C([0, T]; H^{-\frac 12 -\eps}(\T^3))$
for any $\eps > 0$, 
where $T = T(\o)$ is an almost surely positive stopping time, 
independent of $0< \dl \leq 1$.
Here, we have
$C_\dl(t, \rho) =  C_0 \frac{t}{\dl} + C(t, \rho)$, 
where $C_0$ is a universal constant and $C(t, \rho)$ is a finite constant.
Moreover, the limit $u$ is unique 
in the sense that it is independent
of the choice of the mollification kernel $\rho$.

In the proof of Proposition~\ref{PROP:sto4} presented 
in Section \ref{SEC:po} below, 
we make use of certain symmetry,
which may seem to suggest that 
the Fourier transform of a mollification kernel $\rho$ needs to be symmetric.
It is, however, possible to extend 
Theorem~\ref{THM:2-intro}
for general mollification 
even if the Fourier transform of a mollification kernel $\rho$ is not symmetric.
See Remark~\ref{REM:sym}
for a further discussion.

Furthermore, we point out that we can also consider
space-time mollifiers and obtain an analogous result.
In this case,  we impose an additional assumption\footnote
{As pointed out in \cite{Hairer0, Hairer}, 
in the case of the KPZ equation, 
 regularization via a non-symmetric space-time mollifier can cause the appearance 
  of an additional transport term. 
    See Proposition 15.12 and Remark 15.13 in 
  \cite{friz_course_2014}.
For the quadratic SNLW  \eqref{SNLW1} on $\T^3$, 
it may also be possible to use 
 regularization via a non-symmetric space-time mollifier 
by introducing new types of counter terms
for $\<21p>$ and the paracontrolled operator
$\If_{\pl, \pe}$.
We, however,  do not pursue this issue in this paper.}
that a space-time mollification kernel $\rho(x, t)$ is even in $x$, 
namely, $\rho(-x, t) = \rho(x, t)$ for any $t \in \R$.
See Remark \ref{REM:MAX}.
In the context of  
Theorem \ref{THM:weak} on the weak universality, 
this modification allows us to handle 
 noises that are smooth in both space and time.
\end{remark}

Let us complete this section by some additional observations.

\begin{remark}\rm
As we saw in \eqref{sigma1}, the variance $\s_N(t)$ of the truncated stochastic convolution
$\<1>_N $ is time dependent, 
resulting in a time-dependent renormalization constant
in Theorem \ref{THM:2-intro}.
This is due to the lack of any dissipation mechanism in the dispersive setting.
In the parabolic setting, for example, in the case of SQE \eqref{SQE1}, 
there exists a unique invariant measure 
for the truncated linear stochastic dynamics, 
which allows us to take time-independent renormalization constants.
In the wave equation case, 
we may consider the equation with damping,
namely, replace the left-hand side of
\eqref{SNLW1} by $\dt^2 u  + \dt u + (1 -  \Dl)  u$
such that there exists a unique invariant Gaussian measure $\mu_N$ for the (truncated) linear dynamics.
In this case, 
by taking initial data  distributed according to this invariant Gaussian measure $\mu_N$,
the variance of the truncated stochastic convolution
becomes time independent
and thus we can use a time-independent renormalization constant.
See \cite{GKOT, ORTz, OOR, ORW}.

We point out that
in the parabolic setting, it is possible to start with arbitrary deterministic initial data~$u_0$
(under some regularity assumption)
and still use time-independent renormalization constants.
This is thanks to the 
 strong parabolic smoothing, 
  allowing us to handle rough initial data of the form 
  $u_0  - z_0$, 
  where $z_0$
  is a random function distributed by the massive Gaussian free field:
 \[ z_0 = \sum_{n \in \Z^3} \frac{g_n}{\jb{n}}e_n.\]
 
 \noi
 Here, $\{g_n\}_{n \in \Ld_0}$ is a sequence of independent
 standard complex-valued Gaussian random variables 
 and $g_{-n}: = \cj{g_n}$, $n \in \Ld_0$.
 On the other hand, in the damped  wave case, 
due to the lack of strong smoothing, 
our solution theory does not allow us to 
 handle the random data 
 of the form $(u_0, u_1) - (z_0, z_1)$, 
 where $z_0$ is as above and $z_1$ is distributed by the spatial white noise.
Unfortunately, such initial data is too rough to handle in the deterministic manner
for the damped wave equation.
 This in particular implies that 
for the damped wave equation, 
it is not possible to start with 
 arbitrary deterministic initial data
(under some regularity assumption)
and  use a time-independent renormalization constant.
See also Remark 1.2\,(iii) in~\cite{OOR}.

\end{remark}

\begin{remark}
\rm

(i) 
In making sense of the resonant product $X\pe \<1>$, 
we substituted the Duhamel formula
for $X$ as in \eqref{X1}.
This is analogous to the treatment of SQE \eqref{SQE1}; see \cite{MW1}.
Note that 
such an iteration of the (part of) Duhamel formula
already appears in  in the study of the stochastic KdV equation 
with an additive (almost) white noise.
See~\cite{Gub12, Oh}.

\smallskip

\noi
(ii) 
Unlike the parabolic setting, we need to assume higher regularity for initial data
than the stochastic convolution.
This is due to the lack of smoothing in our dispersive problem.
If initial data is random (independent of the additive space-time white noise), we may take it to be of low regularity.

\smallskip

\noi
(iii) In Proposition \ref{PROP:sto4}, 
we assumed one time differentiability
of an input function for the paracontrolled operator
$\If_{\pl, \pe}$. 
This smoothness in time allows us to exploit the time oscillation
by integration by parts.  See \eqref{A7} below.
On the one hand, we may prove an analogous
boundedness result by assuming less time regularity of an input function.
On the other hand, it seems that 
we do need to assume some time regularity of an input function.
This necessity for smoothness in time is analogous to the parabolic setting,
but for a different reason in the parabolic setting; see \cite{Hairer, CC, MW1}.

%
%

\end{remark}

\begin{remark}[On commutators]\label{REM:SQE}
\rm

As we mentioned above, 
commutators play an important role in applying
the paracontrolled calculus in the parabolic setting.
If we were to follow the argument for SQE presented in \cite{MW1}, 
then we would 
write \eqref{X1} as 
\begin{align}
\begin{split}
 X   & = S(t) (X_0, X_1) - 2\I \big((X  +Y+\<20>)\pl \<1>\big)\\
&   = S(t) (X_0, X_1) - 2 (X+Y+\<20>)\pl \I(\<1>)+ \com_1.
\end{split}
\label{com0a}
 \end{align}

\noi
Here, the commutator
$\com_1$ denotes
the commutator of the paraproduct $\pl$ and the Duhamel integral operator $\I = (\dt^2 + 1 - \Dl)^{-1}$.

In the case of SQE \eqref{SQE1} on $\T^3$, it was crucial that 
the commutator of the paraproduct $\pl$ and the Duhamel integral operator $ (\dt - \Dl)^{-1}$
for the heat equation
enjoyed some smoothing property,
which resulted from the smoothing property of the commutator  $[e^{t\Dl}, \pl]$ between 
 the linear heat semigroup $e^{t \Dl}$ and 
the paraproduct $\pl$
(see Lemma 2.5 in \cite{CC}
and Proposition~A.16 in \cite{MW1}).
Unfortunately, in our dispersive setting, 
 the commutator $\com_1$ does not seem to provide any smoothing.
We point out that 
if the identity \eqref{com0a} were to hold
with a smoother commutator, 
then the rest would follow as in the parabolic setting \cite{MW1}
(and in particular, there would no need to introduce
paracontrolled operators).
Namely,  by defining
\begin{align*}
[\pl, \pe](f, g, h) =  (f\pl g) \pe h - f (g \pe h), 
\end{align*}

\noi
we can write $ X \pe \<1>$ as 
%
\begin{align*}
 X \pe \<1> = 
 S(t) (X_0, X_1) - 2 (X+Y+\<20>) \big(\I(\<1>)\pe\<1>\big)
+ \com_1 \pe  \<1>+ \com_2 , 
 \end{align*}

\noi
where $\com_2$ is given by 
$\com_2  = [\pl, \pe]\big(X+Y+\<20>,  \I(\<1>), \<1>\big)$.
Note that $\com_2$ is a well defined distribution 
thanks to the smoothing property of  $[\pl, \pe]$.
See Lemma~2.4 in \cite{GIP}
and Proposition~A.9 in \cite{MW1}.

Let us now consider the first commutator $\com_1$.
Given an operator $T$, 
let 
\begin{align*}
\big[T, \pl\big](f, g) =T(f\pl g) - f\pl (T g).
\end{align*}

\noi
Then, 
by setting   $\S = \jb{\nb} \I = \jb{\nb} (\dt^2 + 1 - \Dl)^{-1}$, 
we have
\[ [\I, \pl](f, g) = \S \circ \big[\jb{\nb}^{-1}, \pl\big](f, g) + [\S, \pl] (f,  \jb{\nb}^{-1}g).\]

\noi
It is easy to see that the first commutator 
$[ \jb{\nb}^{-1}, \pl]$ enjoys certain smoothing.\footnote{If $f$ and $g$ have regularities $0 < s_1 < 1$ and $s_2<0$ with  $s_1 + s_2 < 0$, 
then each of $\jb{\nb}^{-1}(f \pl g)$ and $f \pl (\jb{\nb}^{-1}g)$
has 
regularity $s_2 + 1$.
On the other hand, the commutator $\big[\jb{\nb}^{-1}, \pl\big](f, g)$
has regularity $s_1 + s_2 + 1$.
Roughly speaking, this fact follows from the following observation;
given $n, n_1, n_2 \in \Z^3$ with  $n = n_1 + n_2$, we have
\begin{align*}
\bigg|\frac{1}{\jb{n}} - \frac{1}{\jb{n_2}}\bigg|
= \frac{\big|\jb{n_2} - \jb{n}\big|}{\jb{n}\jb{n_2}}
\les \frac{\jb{n_1}}{\jb{n}\jb{n_2}}
\end{align*}

\noi
In particular, when $|n_1| \ll |n_2| \sim |n|$
and the first function $f$ has positive regularity, 
this observation provides  smoothing.
}
On the other hand, 
if we were to exhibit smoothing 
for  the second commutator $[\S, \pl]$ as in the parabolic setting, 
we would need to study the smoothing property
of the commutator $[ \,\sin(t \jb{\nb}), \pl]$.
Unfortunately, there seems to be no smoothing
for this commutator in general,\footnote{Under $|n_1| \ll |n_2|$, 
there is no smoothing for
$\sin( t\jb{n_1+n_2}) - (\sin t \jb{n_2})$.
}
which prevents us from working with commutators for our dispersive problem.
By introducing the paracontrolled operators, 
we indeed exhibit smoothing
under 
the commutator $[\S, \pl]$ (and hence under $[\I, \pl]$)
in a probabilistic manner with a specific second input function, i.e.~$g = \<1>$.
 See Proposition \ref{PROP:sto4}.
This is in sharp contrast with the parabolic setting,
where a smoothing can be shown for 
 $[e^{t\Dl}, \pl]$ in a deterministic manner
 (without specifying the second input function either).

Lastly, we point out that our approach
via  paracontrolled operators
also works in the parabolic setting.
In particular, 
in place of using commutators, 
we can direct study relevant  paracontrolled operators
to prove local well-posedness of SQE \eqref{SQE1} on $\T^3$.

\end{remark}

\section{Notations and basic lemmas}
\label{SEC:2}

\subsection{Sobolev spaces 
 and Besov spaces}

Let $s \in \R$ and $1 \leq p \leq \infty$.
We define the $L^2$-based Sobolev space $H^s(\T^3)$
by the norm:
\begin{align*}
\| f \|_{H^s} = \| \jb{n}^s \ft f (n) \|_{\l^2_n}
\end{align*}

\noi
and set $\H^s(\T^3)$ to be 
\[\H^s(\T^3) = H^s(\T^3)\times H^{s-1}(\T^3).\]

\noi
We also define the $L^p$-based Sobolev space $W^{s, p}(\T^3)$
by the norm:
\begin{align*}
\| f \|_{W^{s, p}} = \big\| \F^{-1} (\jb{n}^s \ft f(n))\big\|_{L^p}
\end{align*}

\noi
with the standard modification when $p = \infty$.
When $p = 2$, we have $H^s(\T^3) = W^{s, 2}(\T^3)$.

Let $\phi:\R \to [0, 1]$ be a smooth  bump function supported on $[-\frac{8}{5}, \frac{8}{5}]$ 
and $\phi\equiv 1$ on $\big[-\frac 54, \frac 54\big]$.
For $\xi \in \R^3$, we set $\phi_0(\xi) = \phi(|\xi|)$
and 
\[\phi_{j}(\xi) = \phi\big(\tfrac{|\xi|}{2^j}\big)-\phi\big(\tfrac{|\xi|}{2^{j-1}}\big)\]

\noi
for $j \in \N$.
Then, for $j \in \N_0 := \N \cup\{0\}$, 
we define  the Littlewood-Paley projector  $\P_j$ 
as the Fourier multiplier operator with a symbol $\varphi_j$
given by 
\begin{align}
 \varphi_j(\xi) = \frac{\phi_j(\xi)}{\sum_{k \in \N_0} \phi_k(\xi)}.
\label{phi1}
 \end{align}

\noi
Note that, for each $\xi \in \R^3$,  the sum in the denominator is over finitely many $k$'s.
Thanks to the normalization \eqref{phi1}, 
we have 
\[ f = \sum_{j = 0}^\infty \P_j f, \]

\noi
which is used in \eqref{para1}.

We briefly recall the basic properties of the Besov spaces $B^s_{p, q}(\T^3)$
defined by the norm:
\begin{equation*}
\| u \|_{B^s_{p,q}} = \Big\| 2^{s j} \| \P_{j} u \|_{L^p_x} \Big\|_{\l^q_j(\N_0)}.
\end{equation*}

\noi
Note that  $H^s(\T^3) = B^s_{2,2}(\T^3)$.

\begin{lemma}\label{LEM:para}
\textup{(i) (paraproduct and resonant product estimates)}
Let $s_1, s_2 \in \R$ and $1 \leq p, p_1, p_2, q \leq \infty$ such that 
$\frac{1}{p} = \frac 1{p_1} + \frac 1{p_2}$.
Then, we have 
\begin{align}
\| f\pl g \|_{B^{s_2}_{p, q}} \les 
\|f \|_{L^{p_1}} 
\|  g \|_{B^{s_2}_{p_2, q}}.  
\label{para2a}
\end{align}

\noi
When $s_1 < 0$, we have
\begin{align}
\| f\pl g \|_{B^{s_1 + s_2}_{p, q}} \les 
\|f \|_{B^{s_1 }_{p_1, q}} 
\|  g \|_{B^{s_2}_{p_2, q}}.  
\label{para2}
\end{align}

\noi
When $s_1 + s_2 > 0$, we have
\begin{align}
\| f\pe g \|_{B^{s_1 + s_2}_{p, q}} \les 
\|f \|_{B^{s_1 }_{p_1, q}} 
\|  g \|_{B^{s_2}_{p_2, q}}  .
\label{para3}
\end{align}

\noi
\textup{(ii)}
Let $s_1 <  s_2 $ and $1\leq p, q \leq \infty$.
Then, we have 
\begin{align} 
\| u \|_{B^{s_1}_{p,q}} 
&\les \| u \|_{W^{s_2, p}}.
\label{embed}
\end{align}

\end{lemma}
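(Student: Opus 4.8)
The plan is to derive all four estimates from the classical Littlewood--Paley realization of Bony's paraproduct decomposition; see \cite{Bony, BCD, GIP}. The only inputs needed are the $L^p$-boundedness, uniform in $j \in \N_0$, of the projectors $\P_j$ and of the smooth partial-sum operators $\sum_{j \le k}\P_j$; Hölder's inequality in the $x$-variable; Young's inequality $\l^1 * \l^q \embeds \l^q$ in the dyadic index; and elementary frequency-support bookkeeping based on the fact that, for $j \ge 1$, $\P_j f$ has spatial Fourier support in the annulus $\{|n| \sim 2^j\}$ (with the harmless modification for $j = 0$). I will write $m$ for the dyadic index appearing in the output Besov norm.

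For the paraproduct bounds \eqref{para2a} and \eqref{para2}, I would first observe that $\P_j f \cdot \P_k g$ with $j \le k - 2$ has Fourier support in the annulus $\{|n| \sim 2^k\}$, so that only the $O(1)$ blocks $k$ with $|k - m| \le C$ contribute to $\P_m(f\pl g)$; Hölder's inequality (with $\frac1p = \frac1{p_1}+\frac1{p_2}$) then gives
\begin{align*}
\| \P_m(f \pl g)\|_{L^p_x} \les \sum_{k \, : \, |k - m| \le C} \Big\| \sum_{j \le k - 3}\P_j f\Big\|_{L^{p_1}_x} \| \P_k g\|_{L^{p_2}_x}.
\end{align*}
For \eqref{para2a} I would bound the inner partial sum by $\| f\|_{L^{p_1}_x}$ uniformly in $k$, multiply by $2^{s_2 m}$, and pass to the $\l^q_m$-norm using $k \sim m$. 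For \eqref{para2}, since $s_1 < 0$, I would instead estimate $\| \sum_{j \le k - 3}\P_j f\|_{L^{p_1}_x} \le \sum_{j \le k-3} 2^{-s_1 j}\big(2^{s_1 j}\|\P_j f\|_{L^{p_1}_x}\big)$ and recognize the right-hand side as an $\l^1$-convolution of the summable geometric sequence $2^{-s_1 j}\ind_{j \le k-3}$ against $\big(2^{s_1 j}\|\P_j f\|_{L^{p_1}_x}\big)_j$, after which Young's inequality together with $\l^q \embeds \l^\infty$ closes the $\l^q_m$-estimate with the bound $\les \| f\|_{B^{s_1}_{p_1,q}}\| g\|_{B^{s_2}_{p_2,q}}$.

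The resonant product \eqref{para3} is where some care is required. The key structural difference is that $\P_j f \cdot \P_k g$ with $|j - k| \le 2$ has Fourier support only in the \emph{ball} $\{|n| \les 2^j\}$, so $\P_m(f \pe g)$ receives contributions only from $j \ge m - C$. Setting $a_j := 2^{s_1 j}\| \P_j f\|_{L^{p_1}_x}$ and $b_j := 2^{s_2 j}\| \wt{\P}_j g\|_{L^{p_2}_x}$ with $\wt{\P}_j = \sum_{|k - j|\le 2}\P_k$, Hölder's inequality gives
\begin{align*}
2^{(s_1 + s_2)m}\| \P_m(f \pe g)\|_{L^p_x} \les \sum_{j \ge m - C} 2^{(s_1 + s_2)(m - j)}\, a_j\, b_j.
\end{align*}
Since $s_1 + s_2 > 0$ and $m - j \le C$, the weight $l \mapsto 2^{(s_1 + s_2)l}\ind_{l \le C}$ is an $\l^1$-sequence; applying Young's inequality in $m$ and then $\| a_j b_j\|_{\l^q_j} \le \| a_j\|_{\l^q_j}\| b_j\|_{\l^\infty_j}$ followed by $\l^q \embeds \l^\infty$ bounds the $\l^q_m$-norm by $\| f\|_{B^{s_1}_{p_1,q}}\| g\|_{B^{s_2}_{p_2,q}}$. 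I expect this $\l^q$-summation --- the step where the hypothesis $s_1 + s_2 > 0$ is genuinely used --- to be the main, and essentially the only, point requiring attention.

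Finally, for the embedding \eqref{embed} I would factor $\P_j u = \big(\P_j \jb{\nb}^{-s_2}\big)\jb{\nb}^{s_2}u$. The multiplier $\P_j \jb{\nb}^{-s_2}$ has symbol supported in $\{|n|\sim 2^j\}$ (for $j \ge 1$), of size $\sim 2^{-s_2 j}$ with Mikhlin-type derivative bounds, hence is bounded on $L^p$ with norm $\les 2^{-s_2 j}$, and likewise for $j = 0$; this gives the pointwise-in-$j$ estimate $2^{s_1 j}\| \P_j u\|_{L^p_x} \les 2^{(s_1 - s_2)j}\| u\|_{W^{s_2, p}}$. Since $s_1 - s_2 < 0$, the sequence $\big(2^{(s_1 - s_2)j}\big)_{j \ge 0}$ lies in $\l^q$, so taking the $\l^q_j$-norm yields \eqref{embed}. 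Apart from the resonant product, no step poses a real obstacle: the content is entirely the frequency-support geometry --- annulus for $\pl$, ball for $\pe$ --- combined with Hölder's and Young's inequalities.
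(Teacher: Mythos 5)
Your argument is the standard Littlewood--Paley proof of Bony's paraproduct and resonant product estimates; the paper itself gives no proof here and simply refers to \cite{BCD, MW2}, where this same argument appears, so the approaches coincide.

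The one spot worth tightening is \eqref{para2}. The sequence $j \mapsto 2^{-s_1 j}\ind_{j\le k-3}$ that you describe as ``summable'' is in fact \emph{growing} in $j$ (since $s_1 < 0$), and $\sum_{j\le k-3}2^{-s_1 j}a_j$ is not a convolution in $k$ as written, because the cutoff ties $j$ to $k$. The clean bookkeeping is to reindex $l = k - j$, giving
\begin{align*}
\sum_{j\le k-3} 2^{-s_1 j} a_j = 2^{-s_1 k}\sum_{l\ge 3} 2^{s_1 l}\, a_{k-l},
\end{align*}
which is $2^{-s_1 k}$ times a convolution against the genuinely $\ell^1$ sequence $(2^{s_1 l}\ind_{l\ge 3})_l$ (summable precisely because $s_1 < 0$); or, more simply, sum the geometric series to get $\sum_{j\le k-3}2^{-s_1 j}a_j \les 2^{-s_1 k}\|a\|_{\ell^\infty}$. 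Either way the factor $2^{-s_1 k}$ is absorbed by the $2^{s_1 m}$ part of the output weight (using $m\sim k$), the $\ell^q_m$-summability falls entirely on the $g$-block, and $\|a\|_{\ell^\infty}\le\|a\|_{\ell^q}$ closes \eqref{para2}. The remaining estimates --- annulus support for $\pl$, ball support for $\pe$ with the hypothesis $s_1+s_2>0$ supplying the $\ell^1$-weight in Young's inequality, and the factorization $\P_j u = (\P_j\jb{\nb}^{-s_2})\jb{\nb}^{s_2}u$ for \eqref{embed} --- are correctly argued.
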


The product estimates \eqref{para2a},  \eqref{para2},  and \eqref{para3}
follow easily from the definition \eqref{para1} of the paraproduct 
and the resonant product.
See \cite{BCD, MW2} for details of the proofs in the non-periodic case
(which can be easily extended to the current periodic setting).
The embedding \eqref{embed}
 follows from the $\l^{q}$-summability 
of $\big\{2^{(s_1 - s_2)j}\big\}_{j \in \N_0}$ for $s_1 < s_2$
and the uniform boundedness of the Littlewood-Paley projector $\P_j$.

We also recall the following fractional Leibniz rule.

\begin{lemma}\label{LEM:bilin}
 Let $0\le s \le 1$.
Suppose that 
 $1<p_j,q_j,r < \infty$, $\frac1{p_j} + \frac1{q_j}= \frac1r$, $j = 1, 2$. 
 Then, we have  
\begin{equation*}  
\| \jb{\nb}^s (fg) \|_{L^r(\T^d)} 
\les  \| f \|_{L^{p_1}(\T^d)} 
\| \jb{\nb}^s g \|_{L^{q_1}(\T^d)} + \| \jb{\nb}^s f \|_{L^{p_2}(\T^d)} 
\|  g \|_{L^{q_2}(\T^d)}.
\end{equation*}  

\end{lemma}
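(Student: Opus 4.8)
The estimate is the Kato--Ponce/fractional Leibniz rule on $\T^d$, and I would prove it by Bony's paraproduct decomposition together with the square-function characterization of the $L^r$-Sobolev spaces. For $s=0$ the claim is just H\"older's inequality (using $\frac1r=\frac1{p_1}+\frac1{q_1}$), so assume $0<s\le1$. Decompose $fg=f\pl g+f\pe g+f\pg g$ as in \eqref{para1}. The term $f\pg g$ is obtained from $f\pl g$ by interchanging $f\leftrightarrow g$ and $(p_1,q_1)\leftrightarrow(p_2,q_2)$, so it produces the second term on the right-hand side; hence it suffices to bound $\|\jb{\nb}^s(f\pl g)\|_{L^r}$ and $\|\jb{\nb}^s(f\pe g)\|_{L^r}$ by $\|f\|_{L^{p_1}}\|\jb{\nb}^s g\|_{L^{q_1}}$.

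The two classical inputs, valid for $1<r<\infty$, are: (a) if $u_j$ has Fourier support in $\{|\xi|\les 2^j\}$ and $s>0$, then $\|\sum_j u_j\|_{W^{s,r}(\T^d)}\les\big\|\big(\sum_j 2^{2js}|u_j|^2\big)^{1/2}\big\|_{L^r}$, which in the special case $u_j=\P_j h$ gives the two-sided characterization $\|\jb{\nb}^s h\|_{L^r}\sim\big\|\big(\sum_j2^{2js}|\P_j h|^2\big)^{1/2}\big\|_{L^r}$ (both follow from the Mikhlin--H\"ormander multiplier theorem / Littlewood--Paley inequality); and (b) the pointwise bound $\sup_j|\P_{\le j}f|+\sup_j|\P_j f|\les\mathcal{M}f$ with $\mathcal{M}$ the Hardy--Littlewood maximal function, bounded on $L^r$ for $r>1$, where $\P_{\le j}:=\sum_{k<j}\P_k$. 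For the paraproduct, $\P_\ell(f\pl g)$ is a finite sum of terms $\P_\ell(\P_{\le k-2}f\cdot\P_k g)$ with $k=\ell+O(1)$; using (a), the pointwise estimate $|\P_{\le k-2}f\cdot\P_k g|\le(\sup_j|\P_{\le j}f|)\,|\P_k g|$, and then H\"older,
\begin{align*}
\|\jb{\nb}^s(f\pl g)\|_{L^r}
&\les\Big\|\Big(\sum_k 2^{2ks}\big|\P_{\le k-2}f\cdot\P_k g\big|^2\Big)^{1/2}\Big\|_{L^r}
\les\Big\|\big(\sup_j|\P_{\le j}f|\big)\Big(\sum_k 2^{2ks}|\P_k g|^2\Big)^{1/2}\Big\|_{L^r}\\
&\les\big\|\sup_j|\P_{\le j}f|\big\|_{L^{p_1}}\Big\|\Big(\sum_k 2^{2ks}|\P_k g|^2\Big)^{1/2}\Big\|_{L^{q_1}}
\les\|f\|_{L^{p_1}}\|\jb{\nb}^s g\|_{L^{q_1}},
\end{align*}
where (b) is used on $f$ and (a) on $g$.

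For the resonant term write $f\pe g=\sum_j\P_j f\,\widetilde\P_j g$ with $\widetilde\P_j:=\sum_{|k-j|\le2}\P_k$; each summand has Fourier support in a ball $\{|\xi|\les 2^j\}$, so (a) applies with $u_j=\P_j f\,\widetilde\P_j g$, and then, as above, the pointwise bound $|\P_j f\,\widetilde\P_j g|\le(\sup_i|\P_i f|)\,|\widetilde\P_j g|$ followed by H\"older and (b) gives
\begin{align*}
\|\jb{\nb}^s(f\pe g)\|_{L^r}
\les\Big\|\Big(\sum_j 2^{2js}|\P_j f\,\widetilde\P_j g|^2\Big)^{1/2}\Big\|_{L^r}
\les\|f\|_{L^{p_1}}\Big\|\Big(\sum_j 2^{2js}|\widetilde\P_j g|^2\Big)^{1/2}\Big\|_{L^{q_1}}
\les\|f\|_{L^{p_1}}\|\jb{\nb}^s g\|_{L^{q_1}}.
\end{align*}
Summing the three contributions, with $f\pg g$ handled by symmetry, yields the lemma.

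\textbf{Main obstacle.} The estimate itself is elementary once one grants (a) and (b); the genuine content is that (a) requires $1<r<\infty$ (it fails at the endpoints, and this is exactly the hypothesis of the lemma), and that the summation lemma in (a) needs $s>0$, which is why the case $s=0$ is split off as the trivial H\"older bound. An alternative route is to establish only the endpoints $s=0$ (H\"older) and $s=1$ (via $\nabla(fg)=(\nabla f)g+f\nabla g$, H\"older, and $\|\jb{\nb}h\|_{L^r}\sim\|h\|_{L^r}+\|\nabla h\|_{L^r}$) and interpolate; but then one must run a bilinear complex interpolation keeping the two distinct exponent pairs fixed, which is slightly delicate, so I would prefer the Littlewood--Paley argument above.
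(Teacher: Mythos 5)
Your proof is correct, but it takes a genuinely different route from the paper, which disposes of the lemma in one line by invoking the Coifman--Meyer theorem on $\R^d$ (citing \cite{CM} and \cite{MS}) and then the transference principle of \cite{FS} to pass from $\R^d$ to $\T^d$. You instead give a self-contained Littlewood--Paley/paraproduct proof directly on the torus, which is more elementary in the sense that it does not rely on the (bilinear multiplier) Coifman--Meyer machinery or on transference. The trade-off is that your argument implicitly leans on the Fefferman--Stein vector-valued maximal inequality: the two-sided square-function characterization $\|\jb{\nb}^s h\|_{L^r}\sim\|(\sum_j 2^{2js}|\P_j h|^2)^{1/2}\|_{L^r}$ follows from Mikhlin--H\"ormander, but your claim (a) for general $u_j$ supported in balls $\{|\xi|\les 2^j\}$ requires, after writing $2^{\ell s}|\P_\ell(\sum_j u_j)|\les \sum_{j\ges\ell}2^{(\ell-j)s}\cdot 2^{js}\mathcal{M}u_j$ and summing the geometric tail (using $s>0$), the $L^r(\ell^2)$ boundedness of $\mathcal{M}$, which is Fefferman--Stein and not just Mikhlin; this is worth stating explicitly since it is the place where the hypothesis $1<r<\infty$ really enters. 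Apart from that, your handling of the three paraproduct pieces, the symmetry reduction for $f\pg g$, and the split-off of the trivial $s=0$ case via H\"older are all in order, and your identification of the range constraints as the genuine content of the lemma is accurate. Both approaches are standard; the paper's is shorter given the references, yours is self-contained and makes visible exactly where the hypotheses $s>0$ and $1<r<\infty$ are used.
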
 

This lemma follows from 
 the Coifman--Meyer theorem on $\R^d$  (see \cite{CM} and the inequality~(1.1) in \cite{MS})
 and   the transference principle \cite[Theorem 3]{FS}.

\subsection{On discrete convolutions}

Next, we recall the following basic lemma on a discrete convolution.

\begin{lemma}\label{LEM:SUM}
\textup{(i)}
Let $d \geq 1$ and $\al, \be \in \R$ satisfy
\[ \al+ \be > d  \qquad \text{and}\qquad \al, \be < d.\]
\noi
Then, we have
\[
 \sum_{n = n_1 + n_2} \frac{1}{\jb{n_1}^\al \jb{n_2}^\be}
\les \jb{n}^{d - \al - \be}\]

\noi
for any $n \in \Z^d$.

\smallskip

\noi
\textup{(ii)}
Let $d \geq 1$ and $\al, \be \in \R$ satisfy $\al+ \be > d$.
\noi
Then, we have
\[
 \sum_{\substack{n = n_1 + n_2\\|n_1|\sim|n_2|}} \frac{1}{\jb{n_1}^\al \jb{n_2}^\be}
\les \jb{n}^{d - \al - \be}\]

\noi
for any $n \in \Z^d$.

\end{lemma}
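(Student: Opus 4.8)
The plan is to prove both parts by a dyadic decomposition of the summation variables, reducing the discrete sums to counting lattice points in dyadic shells and summing a geometric-type series. For part~(i), I would split into the three regimes $|n_1| \lesssim |n_2|$, $|n_1| \sim |n_2|$, and $|n_2| \lesssim |n_1|$ (the first and third being symmetric under $\al \leftrightarrow \be$, $n_1 \leftrightarrow n_2$), and within the first regime further dyadically localize $|n_2| \sim 2^j$. In the regime $|n_1| \lesssim |n_2|$, the triangle inequality $n = n_1 + n_2$ forces $|n_2| \gtrsim |n|$ (up to constants, since $|n| \leq |n_1| + |n_2| \lesssim |n_2|$), so the relevant dyadic scales satisfy $2^j \gtrsim \jb n$. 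On the shell $|n_2| \sim 2^j$, one has $\jb{n_2}^{-\be} \sim 2^{-\be j}$, while $\sum_{|n_2| \sim 2^j} \jb{n_1}^{-\al} = \sum_{|n - n_2| \lesssim 2^j}\cdots$; here I would use the elementary bound $\sum_{|m| \leq R} \jb m^{-\al} \lesssim R^{d-\al}$ for $\al < d$ (and $\lesssim \log R$ or $\lesssim 1$ otherwise, but the hypothesis $\al < d$ rules that out), giving a contribution $\lesssim 2^{(d-\al)j} \cdot 2^{-\be j} = 2^{(d-\al-\be)j}$. Since $\al + \be > d$ the exponent $d - \al - \be$ is negative, so summing over $2^j \gtrsim \jb n$ yields a convergent geometric series dominated by its largest term $\jb n^{d-\al-\be}$, as claimed.

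For the regime $|n_1| \sim |n_2|$ in part~(i) — which is also exactly the content of part~(ii) — the dyadic constraint is weaker: now $|n_1| \sim |n_2| \sim 2^j$ only forces $2^j \gtrsim \jb n$ again (but with no lower bound coming from a "large" variable otherwise), and on such a shell the sum is $\lesssim \sum_{|n_1| \sim 2^j} 2^{-\al j} 2^{-\be j} \lesssim 2^{dj} 2^{-(\al+\be)j} = 2^{(d-\al-\be)j}$; summing over $2^j \gtrsim \jb n$ again gives $\lesssim \jb n^{d-\al-\be}$ since $\al + \be > d$. Notice that this estimate used only $\al + \be > d$ and \emph{not} the individual bounds $\al, \be < d$, which is precisely why part~(ii) can drop those hypotheses. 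For part~(i) I would then combine the three regimes; the bound $\al, \be < d$ is needed exactly in the two unbalanced regimes to control $\sum_{|m| \leq R} \jb m^{-\al} \lesssim R^{d-\al}$ without a logarithmic loss.

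I expect the only mild subtlety — not really an obstacle — to be bookkeeping the implicit constants uniformly in $n$, in particular verifying that the lowest relevant dyadic scale is $\sim \jb n$ rather than something $n$-dependent in a worse way, and handling the degenerate low-frequency case $|n| \lesssim 1$ separately (where the sum over $n_1$ is simply $\lesssim \sum_{n_1} \jb{n_1}^{-\al}\jb{n-n_1}^{-\be} \lesssim 1 \sim \jb n^{d-\al-\be}$ by the same shell estimate, since $d - \al - \be < 0$ makes $\jb n^{d-\al-\be} \sim 1$). Everything else is a routine geometric series. One can also note that the result and proof are standard — see, e.g., \cite{GKO} — so in the actual write-up it would suffice to sketch the dyadic argument as above.
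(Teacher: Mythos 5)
Your proposal is correct and complete. The paper itself does not include a proof of this lemma --- it only notes that the statement ``follows from elementary computations'' and refers to Lemmas 4.1 and 4.2 of Mourrat--Weber--Xu \cite{MWX}. Your dyadic-shell argument is precisely the standard route those references take: localize both summation variables to dyadic annuli, use the counting bound $\sum_{|m|\leq R}\jb{m}^{-\alpha}\lesssim R^{d-\alpha}$ (valid for $\alpha<d$) on the unbalanced regimes, use the crude lattice-point count $\#\{|n_1|\sim 2^j\}\sim 2^{dj}$ on the balanced regime, and sum the resulting geometric series $\sum_{2^j\gtrsim\jb{n}}2^{(d-\alpha-\beta)j}\lesssim\jb{n}^{d-\alpha-\beta}$ using $\alpha+\beta>d$. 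You have also correctly isolated where the side hypotheses $\alpha,\beta<d$ enter (only in the unbalanced regimes, via the counting bound), which cleanly explains why part (ii) can drop them, and you have handled the low-frequency case $|n|\lesssim 1$ as well as the degenerate terms. No gap.
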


Namely, in the resonant case (ii), we do not have the restriction $\al, \be < d$.
Lemma \ref{LEM:SUM} follows
from elementary  computations.
See, for example,  Lemmas 4.1 and 4.2 in \cite{MWX} for the proof.

\subsection{Strichartz estimates}
Given  $0 \leq s \leq 1$, 
we say that a pair $(q, r)$ is $s$-admissible 
(a pair $(\wt q, \wt r)$ is dual $s$-admissible,\footnote{Here, we define
the notion of dual $s$-admissibility for the convenience of the presentation.
Note that $(\wt q, \wt r)$ is dual $s$-admissible
if and only if $(\wt q', \wt r')$ is $(1-s)$-admissible.}
 respectively)
if $1 \leq \wt q < 2 < q \leq \infty$, 
 $1< \wt r \le 2 \leq r <\infty$, 
\begin{align*}
 \frac{1}{q} + \frac 3r  = \frac{3}{2}-  s =  \frac1{\wt q}+ \frac3{\wt r} -2, 
\qquad
\frac 1q + \frac{1}{r} \leq \frac 1 2, 
\qquad \text{and} 
\qquad  
\frac1{\wt q}+\frac1{\wt r} \geq \frac32   .
\end{align*}

\noi
We refer to the first two equalities as the scaling conditions
and the last two inequalities as the admissibility conditions.

We say that  $u$ is a solution to the following nonhomogeneous linear wave equation:
\begin{align}
\begin{cases}
(\dt^2 + 1-\Dl) u = f \\
( u, \dt u) |_{t = 0}=(u_0,  u_1) 
\end{cases}
\label{NLW1}
\end{align}

\noi
on a time interval containing $t= 0$, 
if $u$ satisfies the following Duhamel formulation:
\[ u =  \cos (t\jb{\nb}) u_0 +   \frac{\sin (t\jb{\nb})}{ \jb{\nb}} u_1 
+ \int_0^t  \frac{\sin ((t-t')\jb{\nb})}{ \jb{\nb}}   f(t') dt'. \]  

\noi
In the following, we often use
the following short-hand notation:
\[ \I(f)(t) =  \int_0^t  \frac{\sin ((t-t')\jb{\nb})}{ \jb{\nb}}   f(t') dt'. \]

\noi
We now recall the  Strichartz estimates
for solutions to the nonhomogeneous linear wave equation~\eqref{NLW1}.

\begin{lemma}\label{LEM:Str}
Given $0 \leq s \leq 1$,
let $(q, r)$ and $(\wt q,\wt r)$
be $s$-admissible and dual $s$-admissible pairs, respectively. 
Then, a solution $u$ to the nonhomogeneous linear wave equation \eqref{NLW1}
satisfies
\begin{align*}
\| (u, \dt u) \|_ {L^\infty_T \H^s_x } + 
 \| u  \|_{L^q_TL^r_x}
\lesssim 
\|(u_0, u_1) \|_{\H^s}  +  \| f \|_{L^{\wt q}_TL^{\wt r}_x}
\end{align*}

\noi
for all $0 < T \leq 1$. 
The following estimate also holds:
\begin{align*}
\| (u, \dt u) \|_ {L^\infty_T \H^s_x } 
+  \| u  \|_{L^q_TL^r_x}
\lesssim 
\|(u_0, u_1) \|_{\H^s}  +  \| f \|_{L^{1}_T H^{s-1}_x}
\end{align*}

\noi
for all $0 < T \leq 1$.
Here, we used a shorthand notation
$L^q_TL^r_x$ = $L^q([0, T]; L^r(\T^3))$, etc.
\end{lemma}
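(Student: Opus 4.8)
The plan is to derive both inequalities of the lemma from a single input: the loss-free homogeneous Strichartz estimate on $\T^3$,
\begin{align*}
\big\| e^{\pm i t \jb{\nb}} f \big\|_{L^q_T L^r_x} \lesssim \| f \|_{H^s(\T^3)}, \qquad 0 < T \le 1,
\end{align*}
valid for every $s$-admissible pair $(q,r)$ with $0 \le s \le 1$, together with the elementary energy bound $\| S(t)(u_0, u_1) \|_{L^\infty_T \H^s} \lesssim \|(u_0, u_1)\|_{\H^s}$. First I would prove the homogeneous estimate by a Littlewood--Paley decomposition. For the lowest-frequency block (finitely many frequencies), Bernstein's inequality gives $\| e^{\pm it\jb{\nb}} \P_0 f \|_{L^q_T L^r_x} \lesssim \| f \|_{L^2}$. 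For a dyadic block $\P_N f$ with $N \gg 1$, I would invoke the classical local-in-time Strichartz estimates for the Euclidean wave equation, transferred to $\T^3$ without derivative loss on $0 < T \le 1$ by finite speed of propagation (localising to a fundamental domain, where the equation is Euclidean); the passage from the wave symbol $|\xi|$ to the Klein--Gordon symbol $\jb{\xi}$ is a harmless multiplier comparison on each shell $|\xi| \sim N$, since $\jb{\xi} - |\xi|$ is a symbol of order $-1$ there. This yields $\| e^{\pm it\jb{\nb}} \P_N f \|_{L^q_T L^r_x} \lesssim N^s \| \P_N f \|_{L^2}$. Summing in $N$ via the Littlewood--Paley square function estimate in $L^r_x$ (here $r < \infty$) and Minkowski's inequality (using $q, r \ge 2$) gives the claim.

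Granting this, the second inequality is essentially immediate. Its homogeneous part is the energy/Strichartz bound just stated applied to $S(t)(u_0,u_1)$. For the Duhamel term $\I(f)(t) = \int_0^t \frac{\sin((t-t')\jb{\nb})}{\jb{\nb}} f(t')\, dt'$, observe that $\frac{\sin((t-t')\jb{\nb})}{\jb{\nb}} f(t')$ is the value at time $t$ of the free evolution of the Cauchy data $(0, f(t'))$ posed at time $t'$, and $\| (0, f(t')) \|_{\H^s} = \| f(t') \|_{H^{s-1}}$. Applying Minkowski's integral inequality in $t'$ and then the homogeneous Strichartz (and energy) bound to each slice produces exactly $\int_0^T \| f(t') \|_{H^{s-1}}\, dt' = \| f \|_{L^1_T H^{s-1}_x}$, controlling both $\| \I(f)\|_{L^q_T L^r_x}$ and $\| (\I(f), \dt \I(f)) \|_{L^\infty_T \H^s}$.

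For the first inequality I would run the standard $TT^*$ argument. The $L^\infty_T \H^s$ control of $\I(f)$ follows by duality: pairing $\I(f)(t)$ with $g \in H^{-s}$ and moving the self-adjoint propagator onto $g$, Hölder in time gives a bound by $\| f \|_{L^{\wt q}_T L^{\wt r}_x} \cdot \| e^{\pm i(t-\,\cdot\,)\jb{\nb}} g \|_{L^{\wt q'}_T L^{\wt r'}_x}$, and since $(\wt q', \wt r')$ is $(1-s)$-admissible the last factor is $\lesssim \| g \|_{H^{-s}}$ uniformly in $t$ (and likewise for $\dt \I(f)$). The $L^q_T L^r_x$ control comes by first bounding the non-retarded operator $f \mapsto \int_0^T \frac{\sin((t-t')\jb{\nb})}{\jb{\nb}} f(t')\,dt'$ from $L^{\wt q}_T L^{\wt r}_x$ to $L^q_T L^r_x$ (composing the $s$-admissible homogeneous estimate with the dual, $(1-s)$-admissible, one), and then invoking the Christ--Kiselev lemma, which applies because the pairs are strictly non-endpoint — in particular $\wt q < 2 < q$, hence $\wt q < q$ — to pass to the retarded operator $\I(f)$.

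The main obstacle is the loss-free homogeneous estimate on $\T^3$: it is genuinely stronger than Euclidean Strichartz, resting on the finite speed of propagation of the wave equation to reduce to $\R^3$ in local coordinates, plus the dyadic-scale comparison of the Klein--Gordon and wave symbols. Once that is in hand, the remaining steps (Littlewood--Paley summation, Minkowski, duality, Christ--Kiselev) are entirely routine.
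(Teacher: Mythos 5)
Your proposal is correct and follows the same route the paper indicates: invoke the classical Euclidean Strichartz estimates for the wave (or directly the Klein--Gordon) equation and transfer them to $\T^3$ via finite speed of propagation on a bounded time interval. The paper treats this as a citation (to Ginibre--Velo, Lindblad--Sogge, Keel--Tao, Killip--Stovall--Visan) and does not spell out the Littlewood--Paley summation, the $\jb{\xi}$-versus-$|\xi|$ symbol comparison, or the $TT^*$/Christ--Kiselev step; your write-up fills in exactly this standard material and is sound.
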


The Strichartz estimates on $\R^d$ have been studied extensively by many
mathematicians.  See \cite{GV, LS, KeelTao}
in the context of the wave equation.
For the Klein-Gordon equation under consideration, 
see \cite{KSV}.
Thanks to the finite speed of propagation, 
these estimates on $\T^3$ follow from the corresponding
estimates on $\R^3$.

In proving Theorem \ref{THM:1}, we use the fact that 
$\big(8, \frac{8}{3}\big)$
and $(4, 4)$ are $\frac 14$-admissible
and 
  $\frac 12$-admissible, respectively.
We also use
a dual $\frac12$-admissible pair  $\big(\frac 43, \frac 43\big)$.
In proving Theorem \ref{THM:weak}, 
we use 
$\big(\frac{4}{1+2\s}, \frac{4}{1-2\s}\big)$
and $\big(\frac{4}{3+8\s}, \frac{4}{3-4\s}\big)$
which are
$(\frac 12 + \s) $-admissible 
and dual $(\frac 12 + \s) $-admissible, respectively,
for small $\s > 0$.

\subsection{Tools from stochastic analysis}

We conclude this section by recalling useful lemmas
from stochastic analysis.
See \cite{Bog, Shige} for basic definitions.
Let $(H, B, \mu)$ be an abstract Wiener space.
Namely, $\mu$ is a Gaussian measure on a separable Banach space $B$
with $H \subset B$ as its Cameron-Martin space.
Given  a complete orthonormal system $\{e_j \}_{ j \in \N} \subset B^*$ of $H^* = H$, 
we  define a polynomial chaos of order
$k$ to be an element of the form $\prod_{j = 1}^\infty H_{k_j}(\jb{x, e_j})$, 
where $x \in B$, $k_j \ne 0$ for only finitely many $j$'s, $k= \sum_{j = 1}^\infty k_j$, 
$H_{k_j}$ is the Hermite polynomial of degree $k_j$, 
and $\jb{\cdot, \cdot} = \vphantom{|}_B \jb{\cdot, \cdot}_{B^*}$ denotes the $B$--$B^*$ duality pairing.
We then 
denote the closure  of 
polynomial chaoses of order $k$ 
under $L^2(B, \mu)$ by $\mathcal{H}_k$.
The elements in $\H_k$ 
are called homogeneous Wiener chaoses of order $k$.
We also set
\[ \H_{\leq k} = \bigoplus_{j = 0}^k \H_j\]

\noi
 for $k \in \N$.

Let $L = \Dl -x \cdot \nabla$ be 
 the Ornstein-Uhlenbeck operator.\footnote{For simplicity, 
 we write the definition of the Ornstein-Uhlenbeck operator $L$
 when $B = \R^d$.}
Then, 
it is known that 
any element in $\mathcal H_k$ 
is an eigenfunction of $L$ with eigenvalue $-k$.
Then, as a consequence
of the  hypercontractivity of the Ornstein-Uhlenbeck
semigroup $U(t) = e^{tL}$ due to Nelson \cite{Nelson2}, 
we have the following Wiener chaos estimate
\cite[Theorem~I.22]{Simon}.
See also \cite[Proposition~2.4]{TTz}.

\begin{lemma}\label{LEM:hyp}
Let $k \in \N$.
Then, we have
\begin{equation*}
\|X \|_{L^p(\O)} \leq (p-1)^\frac{k}{2} \|X\|_{L^2(\O)}
 \end{equation*}
 
 \noi
 for any $p \geq 2$
 and any $X \in \H_{\leq k}$.

\end{lemma}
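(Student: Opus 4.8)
The plan is to deduce the estimate from Nelson's hypercontractivity theorem for the Ornstein--Uhlenbeck semigroup $U(t) = e^{tL}$, which asserts that $\|U(t) f\|_{L^q(B, \mu)} \leq \|f\|_{L^{p_0}(B, \mu)}$ whenever $1 < p_0 \leq q < \infty$ and $e^{2t} \geq \frac{q-1}{p_0-1}$. Taking $q = p$ (the exponent in the statement) and base exponent $p_0 = 2$, the hypothesis becomes $e^{2t} \geq p-1$, so I fix $t = t_p := \tfrac12 \log(p-1) \geq 0$, for which $\|U(t_p) f\|_{L^p(\O)} \leq \|f\|_{L^2(\O)}$ for every $f \in L^2(B, \mu)$.

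First I would recall the spectral description of $L$: the homogeneous Wiener chaoses $\H_j$ are mutually orthogonal in $L^2(B, \mu)$, and every element of $\H_j$ is an eigenfunction of $L$ with eigenvalue $-j$; hence $U(t)$ acts on $\H_j$ as multiplication by $e^{-jt}$. Given $X \in \H_{\leq k}$, decompose $X = \sum_{j=0}^k X_j$ with $X_j \in \H_j$, and set $Y := \sum_{j=0}^k e^{j t_p} X_j \in \H_{\leq k}$, so that $U(t_p) Y = X$.

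Applying Nelson's inequality to $Y$ and then using the $L^2$-orthogonality of the $\H_j$ together with $e^{2 t_p} = p-1$ yields
\begin{align*}
\|X\|_{L^p(\O)}^2 = \|U(t_p) Y\|_{L^p(\O)}^2 \leq \|Y\|_{L^2(\O)}^2 = \sum_{j=0}^k e^{2 j t_p} \|X_j\|_{L^2(\O)}^2 \leq e^{2 k t_p} \sum_{j=0}^k \|X_j\|_{L^2(\O)}^2 = (p-1)^k \|X\|_{L^2(\O)}^2,
\end{align*}
which is the claim. In the infinite-dimensional setting one first runs this argument on the finite-dimensional subspaces of $B$ generated by finitely many of the $e_j$'s, where polynomial chaoses of a given order span $\H_{\le k}$, and then passes to the limit by density.

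There is no genuine obstacle here: the only nontrivial input is Nelson's hypercontractivity theorem, which I would invoke as a black box (see \cite{Nelson2, Simon}), and the remaining steps are the elementary spectral bookkeeping above. If a self-contained route were desired, the real work would move to proving the two-dimensional hypercontractive estimate for a single Gaussian variable and tensorizing over the coordinates, but given the cited references this is unnecessary.
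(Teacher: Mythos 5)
Your proof is correct and follows the same route the paper indicates: deduce the estimate from Nelson's hypercontractivity by using the spectral action of $U(t)$ on each $\H_j$, decomposing $X$ into its chaos components, and applying the $L^2$-orthogonality; the paper itself simply cites \cite{Simon, TTz} for this standard argument rather than spelling it out.
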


The following lemma will be used in studying regularities of stochastic objects.
We say that a stochastic process $X:\R_+ \to \mathcal{D}'(\T^d)$
is spatially homogeneous  if  $\{X(\cdot, t)\}_{t\in \R_+}$
and $\{X(x_0 +\cdot\,, t)\}_{t\in \R_+}$ have the same law for any $x_0 \in \T^d$.
Given $h \in \R$, we define the difference operator $\dl_h$ by setting
\begin{align}
\dl_h X(t) = X(t+h) - X(t).
\label{diff1}
\end{align}

\begin{lemma}\label{LEM:reg}
Let $\{ X_N \}_{N \in \N}$ and $X$ be spatially homogeneous stochastic processes
$:\R_+ \to \mathcal{D}'(\T^d)$.
Suppose that there exists $k \in \N$ such that 
  $X_N(t)$ and $X(t)$ belong to $\H_{\leq k}$ for each $t \in \R_+$.

\smallskip
\noi\textup{(i)}
Let $t \in \R_+$.
If there exists $s_0 \in \R$ such that 
\begin{align*}
\E\big[ |\ft X(n, t)|^2\big]\les \jb{n}^{ - d - 2s_0}
\end{align*}

\noi
for any $n \in \Z^d$, then  
we have
$X(t) \in W^{s, \infty}(\T^d)$, $s < s_0$, 
almost surely.
Furthermore, if there exists $\g > 0$ such that 
\begin{align*}
\E\big[ |\ft X_N(n, t) - \ft X(n, t)|^2\big]\les N^{-\g} \jb{n}^{ - d - 2s_0}
\end{align*}

\noi
for any $n \in \Z^d$ and $N \geq 1$, 
then 
$X_N(t)$ converges to $X(t)$ in $W^{s, \infty}(\T^d)$, $s < s_0$, 
almost surely.

\noi

\smallskip
\noi\textup{(ii)}
Let $T > 0$ and suppose that \textup{(i)} holds on $[0, T]$.
If there exists $\s \in (0, 1)$ such that 
\begin{align*}
 \E\big[ |\dl_h \ft X(n, t)|^2\big]
 \les \jb{n}^{ - d - 2s_0+ \s}
|h|^\s, 
\end{align*}

\noi
for any  $n \in \Z^d$, $t \in [0, T]$, and $h \in [-1, 1]$,\footnote{We impose $h \geq - t$ such that $t + h \geq 0$.}
then we have 
$X \in C([0, T]; W^{s, \infty}(\T^d))$, 
$s < s_0 - \frac \s2$,  almost surely.
Furthermore, 
if there exists $\g > 0$ such that 
\begin{align*}
 \E\big[ |\dl_h \ft X_N(n, t) - \dl_h \ft X(n, t)|^2\big]
 \les N^{-\g}\jb{n}^{ - d - 2s_0+ \s}
|h|^\s, 
\end{align*}

\noi
for any  $n \in \Z^d$, $t \in [0, T]$,  $h \in [-1, 1]$, and $N \geq 1$, 
then 
$X_N$ converges to $X$ in $C([0, T]; W^{s, \infty}(\T^d))$, $s < s_0 - \frac{\s}{2}$,
almost surely.

\end{lemma}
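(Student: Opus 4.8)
The plan is to prove both parts by the now-standard combination of a Littlewood--Paley decomposition, the Wiener chaos estimate (Lemma~\ref{LEM:hyp}), Bernstein's inequality, and the Kolmogorov continuity criterion. First I would record the basic frequency-localized estimate. Fix $t \in \R_+$ and $j \in \N_0$. Since $\P_j X(t)(x)$ is a \emph{finite} linear combination of the Fourier coefficients $\ft X(n,t) \in \H_{\le k}$, it belongs to $\H_{\le k}$ for every $x \in \T^d$; moreover, spatial homogeneity forces the law of $\P_j X(t)(x)$ to be independent of $x$, so by Plancherel
\[
\E\big[|\P_j X(t)(x)|^2\big] = \int_{\T^d}\E\big[|\P_j X(t)(y)|^2\big]\,dy = \sum_{n\in\Z^d}\varphi_j(n)^2\,\E\big[|\ft X(n,t)|^2\big].
\]
Under the hypothesis of (i) this is $\les \sum_{|n|\sim 2^j}\jb{n}^{-d-2s_0}\les 2^{-2s_0 j}$. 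Lemma~\ref{LEM:hyp} then upgrades this to $\|\P_j X(t)(x)\|_{L^p(\O)}\les p^{k/2}2^{-s_0 j}$ for every $p\ge 2$, and Bernstein's inequality $\|\P_j f\|_{L^\infty_x}\les 2^{jd/p}\|\P_j f\|_{L^p_x}$ together with Minkowski's inequality gives $\big\|\|\P_j X(t)\|_{L^\infty_x}\big\|_{L^p(\O)}\les p^{k/2}2^{jd/p}2^{-s_0 j}$.

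For part (i), fix $s<s_0$ and choose $p$ large enough that $d+(s-s_0)p<0$. Summing over $j$ the previous bound and using $\sup_j a_j^p\le\sum_j a_j^p$,
\[
\E\big[\|X(t)\|_{B^s_{\infty,\infty}}^p\big]\le\sum_{j\ge 0}2^{sjp}\,\E\big[\|\P_j X(t)\|_{L^\infty_x}^p\big]\les p^{pk/2}\sum_{j\ge 0}2^{(d+(s-s_0)p)j}<\infty,
\]
so $X(t)\in B^s_{\infty,\infty}(\T^d)$ almost surely for every $s<s_0$, hence $X(t)\in W^{s,\infty}(\T^d)$ for every $s<s_0$ by the standard embedding $B^{s'}_{\infty,\infty}\hookrightarrow W^{s,\infty}$ for $s<s'$. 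The convergence statement is obtained verbatim with $X_N-X$ in place of $X$: the extra hypothesis $\E[|\ft X_N(n,t)-\ft X(n,t)|^2]\les N^{-\g}\jb{n}^{-d-2s_0}$ yields $\E[\|X_N(t)-X(t)\|_{W^{s,\infty}}^p]\les_p N^{-\g p/2}$, so taking $p>4/\g$ and applying Chebyshev's inequality with threshold $N^{-\g/4}$ together with the Borel--Cantelli lemma gives $\|X_N(t)-X(t)\|_{W^{s,\infty}}\to 0$ almost surely.

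For part (ii) I would run the same machinery on the increments $\dl_h X(t)=X(t+h)-X(t)\in\H_{\le k}$. Using $\E[|\dl_h\ft X(n,t)|^2]\les\jb{n}^{-d-2s_0+\s}|h|^\s$, the Plancherel identity gives $\E[|\P_j\dl_h X(t)(x)|^2]\les|h|^\s 2^{(\s-2s_0)j}$, and then exactly as above, for every $p\ge 2$,
\[
\E\big[\|\dl_h X(t)\|_{W^{s,\infty}}^p\big]\les_p|h|^{\s p/2}\sum_{j\ge 0}2^{(d+(s+\frac{\s}{2}-s_0)p)j}.
\]
For $s<s_0-\frac{\s}{2}$ one may choose $p$ so that both the geometric series converges \emph{and} $\frac{\s p}{2}>1$; the Kolmogorov--Chentsov continuity criterion for $W^{s,\infty}(\T^d)$-valued processes then produces a continuous modification, which is the natural one (e.g.\ the limit of the smooth $X_N$), whence $X\in C([0,T];W^{s,\infty}(\T^d))$ almost surely. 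The convergence statement follows by applying the same argument to $X_N-X$: the hypotheses of (i) on $[0,T]$ and of (ii) give $\E[\|(X_N-X)(0)\|_{W^{s,\infty}}^p]\les N^{-\g p/2}$ and $\E[\|\dl_h(X_N-X)(t)\|_{W^{s,\infty}}^p]\les N^{-\g p/2}|h|^{\s p/2}$, which a standard Kolmogorov-type bound on the supremum upgrades to $\E[\|X_N-X\|_{C([0,T];W^{s,\infty})}^p]\les_p N^{-\g p/2}$, and Chebyshev plus Borel--Cantelli finish the argument.

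The computation is essentially routine, so there is no serious obstacle; the points deserving attention are: (a) the $2^{jd/p}$ loss in Bernstein's inequality, which is why the conclusions hold for $s<s_0$ (resp.\ $s<s_0-\frac{\s}{2}$) with strict inequality and forces one to take $p$ large; (b) that hypercontractivity (Lemma~\ref{LEM:hyp}) costs only the polynomial factor $p^{k/2}$, so $p$ may be sent to infinity, which both makes the $j$-sum converge for $s$ up to the threshold and, crucially in (ii), pushes the increment exponent $\frac{\s p}{2}$ above $1$ so that Kolmogorov's criterion applies; and (c) the use of spatial homogeneity to replace the pointwise-in-$x$ second moment by the Plancherel sum over frequencies, without which a crude Cauchy--Schwarz over a dyadic block would lose a spurious factor $2^{jd}$ and the estimates would fail.
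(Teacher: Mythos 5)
Your proof is correct and follows essentially the same route that the paper points to (the standard argument of Proposition~3.6 in \cite{MWX}, which the authors cite): spatial homogeneity plus Plancherel to convert the second-moment hypothesis into a dyadic estimate for $\E[|\P_j X(t)(x)|^2]$, hypercontractivity (Lemma~\ref{LEM:hyp}) to upgrade to $L^p(\O)$ at the cost of $p^{k/2}$, Bernstein and a large-$p$ choice to control the Besov--H\"older norm, the embedding $B^{s'}_{\infty,\infty}\hookrightarrow W^{s,\infty}$ for $s<s'$ (exactly the logarithmic-loss remark in the paper), Borel--Cantelli for the $N$-convergence, and a quantitative Kolmogorov--Chentsov criterion for the time continuity in (ii), with the modified exponents $\jb{n}^{-d-2s_0+\s}$ and $s<s_0-\s/2$ as in the statement. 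Nothing is missing; the only caveat you already flag is that Kolmogorov produces a continuous modification, which is then identified with $X$ as is standard in this setting.
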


Lemma \ref{LEM:reg} follows
from a straightforward application of the Wiener chaos estimate
(Lemma~\ref{LEM:hyp}).
For the proof, see Proposition 3.6 in \cite{MWX}
and  Appendix in \cite{OOTz}.
As compared to  Proposition~3.6 in \cite{MWX}, 
we made small adjustments.
In studying the time regularity, we 
made the following modifications:
$\jb{n}^{ - d - 2s_0+ 2\s}\mapsto\jb{n}^{ - d - 2s_0+ \s}$
and $s < s_0 - \s \mapsto s < s_0 - \frac \s2$  
so that it is suitable
for studying  the wave equation.
Moreover, while the result in \cite{MWX} is stated in terms of the
Besov-H\"older space $\mathcal{C}^s(\T^d) = B^s_{\infty, \infty}(\T^d)$, 
Lemma \ref{LEM:reg} handles the $L^\infty$-based Sobolev space $W^{s, \infty}(\T^3)$.
Note that 
the required modification of the proof is straightforward
since $W^{s, \infty}(\T^d)$ and $B^s_{\infty, \infty}(\T^d)$
differ only logarithmically:
\[ \| f \|_{W^{s, \infty}} \leq 
\sum_{j = 0}^\infty
 \|\P_j f \|_{W^{s, \infty}}
 \les \| f\|_{B^{s+\eps}_{\infty, \infty}}\]

\noi
for any $\eps > 0$.
For the proof of the almost sure convergence claims, 
see \cite{OOTz}.

Lastly, we recall the following Wick's theorem.
See Proposition I.2 in \cite{Simon}.

\begin{lemma}\label{LEM:Wick}	
Let $g_1, \dots, g_{2n}$ be \textup{(}not necessarily distinct\textup{)}
 real-valued jointly Gaussian random variables.
Then, we have
\[ \E\big[ g_1 \cdots g_{2n}\big]
= \sum  \prod_{k = 1}^n \E\big[g_{i_k} g_{j_k} \big], 
\]

\noi
where the sum is over all partitions of $\{1, \dots, 2 n\}$
into disjoint pairs $(i_k, j_k)$.
\end{lemma}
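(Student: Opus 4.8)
The plan is to reduce everything to the exponential (Laplace transform) formula for a Gaussian vector. Assume, as in the cited reference \cite{Simon}, that the $g_j$ are centered, and set $C_{ij} = \E[g_i g_j]$. Since any real linear combination $\sum_{j=1}^{2n} t_j g_j$ is again a centered Gaussian --- with variance $\sum_{i,j} C_{ij} t_i t_j$ --- one has
\[
\E\Big[ \exp\Big( \sum_{j=1}^{2n} t_j g_j\Big)\Big]
= \exp\Big( \tfrac12 \sum_{i, j =1}^{2n} C_{ij} t_i t_j\Big) =: e^{Q(t)},
\qquad t = (t_1, \dots, t_{2n}) \in \R^{2n}.
\]
No distinctness of the $g_j$ is needed: the indices $1, \dots, 2n$ are merely formal labels for (possibly repeated) members of the Gaussian family, and repetitions only force some $C_{ij}$ to coincide. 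The first step is then to observe that differentiating the left-hand side once in each of $t_1, \dots, t_{2n}$ and setting $t = 0$ yields exactly $\E[g_1 \cdots g_{2n}]$; this is legitimate since the Gaussian Laplace transform is real-analytic, so one may differentiate under the expectation.

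Next I would carry out the same mixed derivative on $e^{Q(t)}$ using the power series $e^{Q(t)} = \sum_{N \ge 0} Q(t)^N / N!$. Because $Q$ is homogeneous of degree $2$, only the term $N = n$ can contribute: terms with $N < n$ are polynomials of degree below $2n$, so the order-$2n$ derivative kills them, while terms with $N > n$ consist of monomials of degree above $2n$, so they vanish at $t = 0$ after differentiation. Writing
\[
\frac{Q(t)^n}{n!}
= \frac{1}{2^n\, n!} \sum_{(a_1, b_1), \dots, (a_n, b_n)} \Big( \prod_{k=1}^n C_{a_k b_k}\Big)\, \prod_{k=1}^n t_{a_k} t_{b_k},
\]
where the sum runs over all ordered $n$-tuples of ordered pairs from $\{1, \dots, 2n\}$, I apply $\partial_{t_1} \cdots \partial_{t_{2n}}\big|_{t=0}$: a given monomial contributes $1$ exactly when the multiset $\{a_1, b_1, \dots, a_n, b_n\}$ equals $\{1, \dots, 2n\}$ with no repetitions --- i.e. when the ordered data encodes a perfect matching of $\{1,\dots,2n\}$ --- and $0$ otherwise.

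The main bookkeeping point, and the step I would be most careful about, is the constant: a fixed perfect matching $\{(i_1, j_1), \dots, (i_n, j_n)\}$ of $\{1, \dots, 2n\}$ is represented by exactly $n!\cdot 2^n$ ordered $n$-tuples of ordered pairs ($n!$ ways to order the pairs, $2^n$ ways to order within the pairs), and this cancels the prefactor $1/(2^n n!)$ precisely. Hence $\partial_{t_1}\cdots\partial_{t_{2n}} e^{Q(t)}\big|_{t=0} = \sum_{\text{pairings}} \prod_{k=1}^n C_{i_k j_k}$, which together with the first step gives the claim. As an alternative route, one could induct on $n$ via the Gaussian integration-by-parts identity $\E[g_1 F] = \sum_{j=2}^{2n} C_{1j}\, \E[\partial_j F]$ applied to $F = g_2 \cdots g_{2n}$: this directly expands $\E[g_1 \cdots g_{2n}]$ as $\sum_{j \ge 2} C_{1j}\, \E\big[\prod_{\ell \ne 1, j} g_\ell\big]$, and the inductive hypothesis on the $(2n-2)$-fold products reorganizes the right-hand side as the sum over pairings in which $1$ is matched with $j$.
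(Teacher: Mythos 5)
Your proof is correct and complete. Note, though, that the paper does not actually prove Lemma~\ref{LEM:Wick}: it simply cites Proposition~I.2 of Simon's $P(\varphi)_2$ book, so there is no in-paper argument to compare against. Your moment-generating-function derivation is the standard one and fills that gap cleanly; the key counting step ($n!\cdot 2^n$ ordered representatives per perfect matching cancelling the $1/(2^n n!)$ prefactor) is handled correctly, as is the reduction to the $N=n$ term in the exponential series by degree homogeneity. Your remark that the $g_j$ are implicitly taken centered is also the right reading --- the identity as stated is false for noncentered Gaussians, and every instance in the paper (Brownian increments in \eqref{Wiener1}, Fourier modes of $\<1>$) is indeed centered. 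The alternative inductive route via Gaussian integration by parts ($\E[g_1 F]=\sum_{j\ge2} C_{1j}\E[\partial_j F]$) that you sketch at the end is equally standard and is arguably closer in spirit to how the identity is actually deployed in Sections~\ref{SEC:sto1}--\ref{SEC:sto2}, where one peels off one pairing at a time as in \eqref{eq:wick-squares-corr} and the expansion of $\E[|\ft{\<2>}(n,t)|^2]$.
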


\section{On the stochastic terms, Part I}
\label{SEC:sto1}

In this and the next sections, 
we establish the regularity properties
of the stochastic objects 
$\<1>$, $\<20>$, and $\<21p>$ defined in \eqref{so3},
\eqref{so4}, and \eqref{soX}, respectively.
The following lemma establishes the regularity properties
of the stochastic convolution $\<1>$ and the Wick power $\<2>$.
See also the proof of Proposition 2.1 in \cite{GKO}.

\begin{lemma}\label{LEM:stoconv}
Let  $T >0$.

\smallskip

\noi
\textup{(i)}
For any $\eps > 0$, 
$ \<1>_N $  in \eqref{so4a} converges to $\<1>$
in $C([0,T];W^{- \frac 12 -\eps,\infty}(\T^3))$ almost surely.
 In particular, we have
  \[\<1> \in C([0,T];W^{- \frac 12 -\eps,\infty}(\T^3))
  \]
  
  \noi
  almost surely.

\smallskip

\noi
\textup{(ii)}
For any $\eps > 0$, 
$ \<2>_N $  in \eqref{so4b} converges to $\<2>$
 in 
$C([0,T];W^{- 1-\eps,\infty}(\T^3))$ almost surely.
 In particular, we have
 \[\<2> \in C([0,T];W^{-1-\eps,\infty}(\T^3))\]
 
 \noi
 almost surely.

\end{lemma}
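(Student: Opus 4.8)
The plan is to establish both convergence statements via the abstract criterion of Lemma~\ref{LEM:reg}, reducing everything to second-moment estimates on the Fourier coefficients of $\<1>_N$, $\<2>_N$ and their increments in time. For part (i), the stochastic convolution is Gaussian, so each $\ft{\<1>}(n,t)$ lives in $\H_{\leq 1}$; from the explicit formula \eqref{sconv1} one computes
\begin{align*}
\E\big[|\ft{\<1>_N}(n,t)|^2\big] = \ind_{|n|\le N}\int_0^t \Big[\frac{\sin((t-t')\jb{n})}{\jb{n}}\Big]^2 dt' \les \jb{n}^{-2},
\end{align*}
which is $\les \jb{n}^{-3-2s_0}$ with $s_0 = -\tfrac12$, and similarly the tail $\E[|\ft{\<1>_N}(n,t)-\ft{\<1>}(n,t)|^2] \les \ind_{|n|>N}\jb{n}^{-2}\le N^{-\g}\jb{n}^{-2+\g}$ for small $\g>0$. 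The time-increment bound $\E[|\dl_h\ft{\<1>}(n,t)|^2]\les \jb{n}^{-2+\s}|h|^\s$ follows by writing $\dl_h\ft{\<1>}(n,t)$ as a sum of two stochastic integrals, one over $[t,t+h]$ contributing $|h|$ directly, the other over $[0,t]$ where one uses $|\sin((t+h-t')\jb{n})-\sin((t-t')\jb{n})|\les \min(1,|h|\jb{n})\le (|h|\jb{n})^\s$; this gives $s_0-\tfrac\s2 = -\tfrac12-\tfrac\s2$, so for any $\eps>0$ we may take $\s$ small to land in $W^{-\frac12-\eps,\infty}$. Applying Lemma~\ref{LEM:reg}(i)--(ii) with $d=3$ then yields (i).

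For part (ii), $\<2>_N = (\<1>_N)^2 - \s_N$ is a second-order Wiener chaos element, so each $\ft{\<2>_N}(n,t)\in \H_{\leq 2}$ and Lemma~\ref{LEM:reg} still applies with $k=2$. Here
\begin{align*}
\ft{\<2>_N}(n,t) = \sum_{\substack{n_1+n_2 = n\\|n_1|,|n_2|\le N}} \ft{\<1>}(n_1,t)\,\ft{\<1>}(n_2,t) - \ind_{n=0}\s_N(t),
\end{align*}
and the Wick ordering exactly removes the diagonal $n_1 = -n_2$ contribution from the $n=0$ mode, so by Wick's theorem (Lemma~\ref{LEM:Wick}) the second moment is a sum over pairings with no self-contraction, giving
\begin{align*}
\E\big[|\ft{\<2>_N}(n,t)|^2\big] \les \sum_{n_1+n_2=n} \jb{n_1}^{-2}\jb{n_2}^{-2} \les \jb{n}^{-1}
\end{align*}
by Lemma~\ref{LEM:SUM}(i) (with $d=3$, $\al=\be=2$, so $\al+\be=4>3$ and $\al,\be<3$), which is $\jb{n}^{-3-2s_0}$ with $s_0 = -1$. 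The difference $\E[|\ft{\<2>_N}(n,t)-\ft{\<2>}(n,t)|^2]$ carries the constraint $\max(|n_1|,|n_2|)>N$, and since at least one frequency exceeds $N$ one extracts $N^{-\g}$ at the cost of $\jb{n_i}^{\g}$, still summable. The time-increment estimate works the same way: expand $\dl_h\ft{\<2>}(n,t)$ using bilinearity, so each term has one factor $\dl_h\ft{\<1>}$ (contributing $(|h|\jb{n_i})^\s$) and one bounded factor, and the resulting sum is controlled by Lemma~\ref{LEM:SUM}. This gives $s_0-\tfrac\s2 = -1-\tfrac\s2$, hence $\<2>\in C([0,T];W^{-1-\eps,\infty})$ for any $\eps>0$, completing (ii).

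The main obstacle — and the only genuinely delicate point — is bookkeeping the combinatorics of the fourth-moment (equivalently, paired second-moment) computation for $\<2>_N$ so that the Wick renormalization is seen to kill precisely the divergent diagonal term: one must verify that after subtracting $\s_N$, every surviving pairing in $\E[\ft{\<2>}(n,t)\overline{\ft{\<2>}(n,t)}]$ pairs an $n_1$-type frequency with an $n_1'$-type one (no contraction within a single copy of $\<2>$), which is what keeps the sum of shape $\sum_{n_1+n_2=n}\jb{n_1}^{-2}\jb{n_2}^{-2}$ rather than the divergent $\sum_{n_1}\jb{n_1}^{-2}$. Everything else is a routine application of the Gaussian hypercontractivity packaged in Lemma~\ref{LEM:reg} together with the discrete convolution bound of Lemma~\ref{LEM:SUM}; no dispersive (multilinear smoothing) input is needed at this stage — that enters only later, for $\<20>$ and $\<21p>$.
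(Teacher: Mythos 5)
Your proposal is correct and follows essentially the same route as the paper: reduce everything to second-moment bounds on the Fourier coefficients and their time increments, invoke Lemma~\ref{LEM:reg}, use the explicit variance formula \eqref{sconv3} with the mean value theorem for $\<1>$, and Wick's theorem (Lemma~\ref{LEM:Wick}) together with Lemma~\ref{LEM:SUM}(i) for $\<2>$, noting that the Wick subtraction cancels the self-contractions so only cross-pairings survive and the convolution sum $\sum_{n=n_1+n_2}\jb{n_1}^{-2}\jb{n_2}^{-2}\les\jb{n}^{-1}$ is exactly what appears. Your remark that no dispersive input is needed at this stage matches the paper's emphasis.
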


\begin{proof}
(i) 
Let $t \geq 0$.
From \eqref{sconv1}, we have
\begin{align}
\ft{\<1>}(n, t)   = 
   \int_0^t \frac{\sin ((t - t') \jb{ n })}{\jb{ n }} d \beta_n (t')
\label{sconv2}
\end{align}

\noi
and thus
\begin{align}
\begin{split}
\E\big[|\ft{\<1>}(n, t)|^2\big] 
& = \s_n(t, t) 
= \frac{t}{2\jb{n}^2} - \frac{\sin(2t \jb{n})}{4\jb{n}^3}\\
& \leq C(t) \jb{n}^{-2}
\end{split}
\label{sconv3}
\end{align}

\noi
for any $n \in \Z^3$, where $\s_n(t, t)$ is defined in \eqref{sigma2}.
Hence from Lemma \ref{LEM:reg}, 
we conclude that 
 $\<1>(t) \in W^{-\frac12 -\eps, \infty}(\T^3)$
 almost surely for any $\eps >0$.

Let  $0 \leq t_2 \leq t_1$. 
From \eqref{sconv1},  we have
\begin{align}
\begin{split}
\ft{\<1>}(n, t_1) - \ft{\<1>}(n, t_2)
& = \int_{t_2}^{t_1}
\frac{\sin((t_1 - t')\jb{n})}{\jb{n}}d\be_n(t')\\
& \hphantom{X}
+ \int_0^{t_2} 
\frac{\sin((t_1 - t')\jb{n}) - \sin((t_2 - t')\jb{n})}{\jb{n}} d\be_n(t').
\end{split}
\label{V0}
\end{align}

\noi
Then, from the mean value theorem,  we have
\begin{align}
\begin{split}
\E\big[|\ft{\<1>}(n, t_1) - \ft{\<1>}(n, t_2)|^2\big] 
& \les \jb{n}^{-2}|t_1 - t_2|
+ t_2 \jb{n}^{-2 + \s} |t_1 - t_2|^\s \\
& \leq C(t_2) \jb{n}^{-2 + \s} |t_1 - t_2|^\s
\end{split}
\label{V0a}
\end{align}

\noi
for any $n \in \Z^3$,   $0 \leq t_2 \leq t_1$ with $t_1 - t_2 \leq 1$, and  $\s \in [0, 1]$.
Hence, from Lemma \ref{LEM:reg}, 
we conclude that
 $\<1> \in C(\R_+; W^{-\frac12 -\eps, \infty}(\T^3))$
 almost surely
  for any $\eps > 0$.

Proceeding as above, we have
\begin{align*}
\E\big[|\ft{\<1>}_M(n, t) - \ft{\<1>}_N(n, t)|^2\big] 
& \leq C(t) \ind_{|n|> N} \cdot \jb{n}^{-2}
 \leq C(t) N^{-\g} \jb{n}^{-2+\g}.
\end{align*}

\noi
for any $n \in \Z^3$, $M \geq N \geq 1$, 
and $\g \geq  0$.
Similarly, with $\dl_h$ as in \eqref{diff1}, we have
\begin{align*}
\E\big[|\dl_h \ft{\<1>}_M(n, t) - \dl_h \ft{\<1>}_N(n, t)|^2\big] 
& \les C(t) \ind_{|n|> N} \cdot \jb{n}^{-2 + \s} |h|^\s\\
& \les C(t) N^{-\g} \jb{n}^{-2 + \s+\g} |h|^\s
\end{align*}

\noi
for any $n \in \Z^3$, $M \geq N \geq 1$, $h \in [-1, 1]$, 
$\g\geq 0$,  and  $\s \in [0, 1]$.
Therefore, it follows from 
Lemma \ref{LEM:reg} that 
given $T>0$ and $\eps > 0$,  the truncated stochastic convolution
$\<1>_N$ converges to $\<1>$
in $C([0, T]; W^{-\frac12 -\eps, \infty}(\T^3))$ almost surely.

\smallskip

\noi
(ii) 
Proceeding as in Part (i), the main task is to estimate  
$\E\big[|\ft{\<2>}(n, t)|^2\big] $.
The following discussion holds for $\<2>_N$
with constants independent of $N \in \N\cup\{\infty\}$.
From \eqref{so4b} and~\eqref{sigma1}, we have
\begin{align}
\begin{split}
\E\big[|\ft{\<2>}(n, t)|^2\big] 
& = \sum_{n = n_1 + n_2} 
\sum_{n = n_1' + n_2'}
\E\bigg[ \Big(\ft{\<1>}(n_1, t)\ft{\<1>}(n_2, t)
- \ind_{n = 0} \cdot \E\big[|\ft{\<1>}(n_1, t)|^2\big]\Big)\\
& \hphantom{XXXXXXXXX}
\times
\cj{\Big(\ft{\<1>}(n_1', t)\ft{\<1>}(n_2', t)
- \ind_{n = 0} \cdot \E\big[|\ft{\<1>}(n_1', t)|^2\big]\Big)}\bigg].
\end{split}
\label{V1}
\end{align}

\noi
In order to have non-zero contribution in \eqref{V1}, 
we must have~$n_1 = n_1'$ and $n_2 = n_2'$ up to permutation.
Thus, with \eqref{sconv1} and Lemma \ref{LEM:SUM}, we have
\begin{align}
\E\big[|\ft{\<2>}(n, t)|^2\big] 
& \les t^2 \sum_{n = n_1 + n_2} 
\frac{1}{\jb{n_1}^2 \jb{n_2}^2}
\les t^2 \jb{n}^{-1}.
\label{V2}
\end{align}

\noi
Hence from Lemma \ref{LEM:reg}, 
we conclude that
 $\<2>(t) \in W^{-1 -\eps, \infty}(\T^3)$
 almost surely
 for any $\eps > 0$.
A similar argument shows 
that  $\<2> \in C([0, T]; W^{-1 -\eps, \infty}(\T^3))$
 almost surely
and that 
$\<2>_N$
 convergences to $\<2>$
in 
$C([0, T]; W^{-1 -\eps, \infty}(\T^3))$ almost surely.
\end{proof}

\begin{remark}\rm
As we saw in the proof of Lemma \ref{LEM:stoconv}\,(i), 
once we establish regularity properties of a given stochastic object $\tau$, 
then a slight modification of the argument
shows convergence of the truncated stochastic objects
$\tau_N$ to $\tau$.
Hence, in the following, we only establish claimed regularity properties
of given stochastic terms.

\end{remark}

Next, we study the regularity of $\<20>$.
As pointed in the introduction, 
a naive parabolic thinking
would give a regularity of $0- = (-\frac 12-) + (-\frac 12-) + 1$, 
where one degree of smoothing comes from the Duhamel integral operator $\I$.
By exploiting multilinear dispersive effect, 
we show that there is in fact an extra $\frac12$-smoothing.

\begin{proof}[Proof of Proposition~\ref{PROP:sto1}]

By definition  $\<20> = \I(\<2>)$, we have
\begin{align}
\ft{\<20>}(n, t)
=  \int_0^t \frac{\sin((t - t') \jb{n})}{\jb{n}}  \ft{\<2>}(n, t')dt'
\label{S2}
\end{align}

\noi
and thus  we have 
\begin{align*}
\ft{\dt \<20>}(n, t)
=  \int_0^t \cos((t - t') \jb{n})  \ft{\<2>}(n, t')dt'.
\end{align*}

\noi
Then, from (the proof of) Lemma \ref{LEM:stoconv} (ii), 
we conclude that 
\[\dt \<20> \in  C([0,T];W^{-1 -\eps,\infty}(\T^3))\]

\noi
almost surely for any $\eps >0$.

In the following, 
we focus on proving that
$\<20> \in C([0,T];W^{\frac 12 -\eps,\infty}(\T^3))$
almost surely.
In view of Lemma \ref{LEM:reg}, it suffices to show that 
there exists small $\s \in (0, 1)$ such that 
\begin{align}
\E \big[ |\ft{\<20>}(n, t)|^2\big] & \le C(T) \jb{n}^{-4+},
\label{S3a}\\
\E \big[ |\ft{\<20>}(n, t_1) - \ft{\<20>}(n, t_2)|^2\big] 
& \le C(T) \jb{n}^{-4 + \s + }|t_1 - t_2|^\s  
\label{S3b}
\end{align}

\noi
for any $n \in \Z^3$ and  $0 \leq t, t_1, t_2 \leq T$
with $0 < |t_1 - t_2| < 1$.

\smallskip

We first prove \eqref{S3a} in the following.
From \eqref{S2}, 
we have 
\begin{align}
\E \big[ |\ft{\<20>}(n, t)|^2\big]
& = 
 \int_0^t \frac{\sin((t - t_1) \jb{n})}{\jb{n}} 
 \int_0^t \frac{\sin((t - t_2) \jb{n})}{\jb{n}} 
\E\Big[ \ft{\<2>}(n, t_1) \cj{\ft{\<2>}(n, t_2)} \Big]  dt_2dt_1.
\label{S4}
\end{align}

\noi
When $n = 0$, it follows from 
\eqref{so4b} with \eqref{sigma1} and \eqref{sigma2} that, we have
\begin{align*}
\E \big[ |\ft{\<20>}(0, t)|^2\big]
& = 
 \int_0^t \sin (t - t_1) 
 \int_0^t \sin (t-t_2) 
\E\big[ \ft{\<2>}(0, t_1) \cj{\ft{\<2>}(0, t_2)} \big]  dt_2dt_1\notag\\
& = 
 \int_0^t \sin (t - t_1) 
 \int_0^t \sin (t-t_2) \notag \\
& \hphantom{XXX}
\times
\sum_{k_1, k_2 \in \Z^3}
\E\Big[\big( |\ft{\<1>}(k_1, t_1)|^2 -  \s_{k_1}(t_1, t_1)\big)\big( |\ft{\<1>}(k_2, t_2)|^2 -  \s_{k_2}(t_2, t_2)\big)\Big] 
 dt_2dt_1\notag\\
& \leq C(T) \sum_{k \in \Z^3}\frac{1}{\jb{k}^4}
 \leq C(T),
\end{align*}

\noi
where  $\s_{k_j}(t_j,t_j)$ is as in~\eqref{sigma2}. 
In the last step,  we used
\begin{align}
\begin{split}
\E& \Big[\big(  |\ft{\<1>}(k_1, t_1)|^2 -  \s_{k_1}(t_1, t_1)\big)\big( |\ft{\<1>}(k_2, t_2)|^2 -  \s_{k_2}(t_2, t_2)\big)\Big] 
 =  \ind_{k_1=\pm k_2} \cdot  \s_{k_1}(t_1,t_2)^2.
\end{split}
\label{eq:wick-squares-corr}
\end{align}

\noi
The identity \eqref{eq:wick-squares-corr}
follows from Wick's theorem (Lemma \ref{LEM:Wick}).
This proves \eqref{S3a} when $n = 0$.

In the following, we assume $n \ne 0$.
By expanding 
$\ft{\<2>}(n, t_1)$ and $\ft{\<2>}(n, t_2)$
as in  \eqref{V1}
with $n = n_1 + n_2$ 
for $\ft{\<2>}(n, t_1)$
and $n = n_1' + n_2'$ for $\ft{\<2>}(n, t_2)$, 
we see that we must have~$n_1 = n_1'$ and $n_2 = n_2'$ up to permutation
in order to have non-zero contribution in \eqref{S4}.
Without loss of generality, assume that $0 \le t_2 \le t_1 \leq t$.
Then, we have
\begin{align}
\E \big[ |\ft{\<20>}(n, t)|^2\big]
& = 4  \sum_{\substack{n = n_1 + n_2\\n_1 \ne \pm n_2}} \int_0^{t} \frac{\sin((t - t_1) \jb{n})}{\jb{n}} 
 \int_0^{t_1} \frac{\sin((t - t_2) \jb{n})}{\jb{n}} 
\s_{n_1}(t_1, t_2) \s_{n_2}(t_1, t_2)  dt_2dt_1\notag\\
& \hphantom{X}
+ 2 \cdot \ind_{n \in 2 \Z^3\setminus\{0\}} 
 \int_0^t \frac{\sin((t - t_1) \jb{n})}{\jb{n}} 
 \int_0^{t_1} \frac{\sin((t - t_2) \jb{n})}{\jb{n}} 
\notag\\
& \hphantom{XXXXXXX}
\times \E\Big[ \ft{\<1>}\big(\tfrac{n}{2}, t_1\big)^2 \, \cj{\ft{\<1>}\big(\tfrac{n}{2}, t_2\big)^2 } \Big]  dt_2dt_1
\notag\\
& =: 
\1 (n, t)+ \II(n, t), 
\label{S5}
\end{align}

\noi
where $\s_{n_j}(t_1, t_2)$ is as in \eqref{sigma2}
and $\II(n, t)$ denotes the contribution from $n_1 = n_2 = n_1' = n_2' = \frac n2$.

We first estimate the second term $\II(n, t)$ in \eqref{S5}.
By Wick's theorem (Lemma \ref{LEM:Wick}) 
with~\eqref{sigma2}, 
we have
\begin{align*}
 \bigg|\E\Big[ \ft{\<1>}\big(\tfrac{n}{2}, t_1\big)^2 & \, \cj{\ft{\<1>}\big(\tfrac{n}{2}, t_2\big)^2 } \Big] \bigg|
\le C(T) \jb{n}^{-4}
\end{align*}

\noi
under $0 \le t_2 \le t_1 \leq t \leq T$.
Hence, from \eqref{S5},  we conclude that 
\[ |\II(n, t)|\le C(T) \jb{n}^{-6},\]

\noi
verifying \eqref{S3a}.

In the following, we estimate $\1(n, t)$ in \eqref{S5}:
\begin{align}
\begin{split}
\1 (n, t) & = 
 - \sum_{k_1, k_2 \in \{1, 2\}} \sum_{\eps_1, \eps_2 \in \{-1, 1\}}
\frac{\eps_1\eps_2e^{i (\eps_1+\eps_2) t\jb{n}}}{\jb{n}^2}
\sum_{\substack{n = n_1 + n_2\\n_1 \ne \pm n_2}}
\int_0^{t} 
e^{-i \eps_1t_1\jb{n}} \\ 
& \hphantom{XX}
\times  \int_0^{t_1} e^{-i \eps_2t_2\jb{n}}
\prod_{j = 1}^2\s_{n_j}^{(k_j)}(t_1, t_2)
\,   dt_2dt_1 =: \sum_{k_1, k_2 \in \{1, 2\}} \1^{(k_1,k_2)} (n, t),
\end{split}
\label{S6a}
\end{align}

\noi
where 
$\s_{n}^{(1)}(t_1, t_2)$
and $\s_{n}^{(2)}(t_1, t_2)$ are defined by 
\begin{align}
\begin{split}
\s_{n}^{(1)}(t_1, t_2) &  :=  
 \frac{\cos((t_1 - t_2)\jb{n}) }{2 \jb{n}^2} t_2, 
\\ 
\s_{n}^{(2)}(t_1, t_2)  & := 
  \frac{\sin ((t_1-t_2)  \jb{n})}{4\jb{n}^3 }
-  \frac{\sin ((t_1 +  t_2) \jb{n})}{4\jb{n}^3 }
\end{split}
\label{S6}
\end{align}

\noi
such that 
$ \s_{n}(t_1, t_2) =  \s_{n}^{(1)}(t_1, t_2) + \s_{n}^{(2)}(t_1, t_2)$.
If $|n_1| \sim 1$ or $|n_2| \sim1$, then 
 from \eqref{S6}
 with $\jb{n_1} \jb{n_2} \ges \jb{n}$, we easily obtain 
\begin{align}
|\1 (n, t)| \le C(T) \jb{n}^{-4+},
\label{S7}
\end{align}

\noi
satisfying \eqref{S3a}.
Hence, 
we assume  $|n_1|, |n_2|\gg 1$
in the following.
 By Lemma \ref{LEM:SUM}
 with \eqref{S6}, 
we can easily bound 
the contribution to $\1(n, t)$ in~\eqref{S6a}
from  $(k_1, k_2) \ne (1, 1)$
and obtain for them the decay required in~\eqref{S7}.

In  the following, we  estimate the worst contribution to $\1(n, t)$ coming from 
 $(k_1, k_2) = (1, 1)$:
\begin{align*}
\begin{split}
\1^{(1,1)} (n, t)
& : = -\frac{1}{16} \sum_{\eps_1, \eps_2, \eps_3, \eps_4 \in \{-1, 1\}}
\sum_{\substack{n = n_1 + n_2\\n_1 \ne \pm n_2}} 
\frac{\eps_1\eps_2e^{i (\eps_1+\eps_2) t\jb{n}}}{\jb{n}^2\jb{n_1}^2 \jb{n_2}^2}\\
& \hphantom{XXXX}\times
\int_0^{t} 
e^{-i t_1 \kk_1( \bar n)}
 \int_0^{t_1} 
t_2^2   e^{-i t_2\kk_2(\bar n)}
 \, dt_2dt_1, 
\end{split}
\end{align*}

\noi
where $\kk_1( \bar n)$ and $\kk_2( \bar n)$ are defined by 
\begin{align*}
\begin{split}
\kk_1(\bar n) & = 
 \eps_1 \jb{n} -\eps_3 \jb{n_1} - \eps_4\jb{n_2}, \\ 
\kk_2(\bar n) & = 
 \eps_2 \jb{n} +\eps_3 \jb{n_1} + \eps_4\jb{n_2}.
\end{split}
\end{align*}

\noi
When $|n| \les 1$, \eqref{S7} trivially holds.
Hence, we assume $|n|\gg 1$. 
We have to carefully  estimate the different 
contributions coming from the various combinations of  $\bar \eps = (\eps_1, \eps_2, \eps_3, \eps_4)$
by  exploiting either (i) the dispersion (= oscillation) 
or (ii) smallness of the measure of the relevant frequency set.

Fix our choice of $\bar \eps = (\eps_1, \eps_2, \eps_3, \eps_4)$ 
and denote by $\1^{(1,1)}_{\bar \eps}(n,t)$ the associated contribution to $\1^{(1,1)}(n,t)$.
By switching the order of integration and first integrating in $t_1$, we have 
\begin{align*}
\bigg|\int_0^{t} 
& e^{-i t_1 \kk_1( \bar n)}
 \int_0^{t_1} 
t_2^2   e^{-i t_2\kk_2(\bar n)}
 \, dt_2dt_1\bigg| \\
& = \bigg| \int_0^{t} 
t_2^2   e^{-i t_2\kk_2(\bar n)}
\frac{e^{-i t \kk_1( \bar n)} - e^{-i t_2 \kk_1( \bar n)}}{-i \kk_1(\bar n)}
 \, dt_2\bigg| \le C(T) (1+ |\kk_1(\bar n)|)^{-1}.
\end{align*}

\noi
Thus,  we have 
\begin{align}
|\1^{(1,1)}_{\bar \eps} (n, t)|
& \le C(T)   
\sum_{n = n_1 + n_2} 
\frac{1}{\jb{n}^2\jb{n_1}^2 \jb{n_2}^2(1+ |\kk_1(\bar n)|)}.
\label{SS1}
\end{align}
Without loss of generality, by symmetry we can assume $|n_1|\geq |n_2|$ in the following when estimating
the sum on the right-hand side.

\smallskip

\noi
$\bullet$ {\bf Case 1:}
$(\eps_1, \eps_3, \eps_4) = (\pm 1, \mp1, \mp1)$ or
$(\pm1, \mp1, \pm1)$.

In this case, we have 
$|\kk_1(\bar n)| \geq \jb{n}$.
Then, from Lemma \ref{LEM:SUM}, we obtain
\begin{align*}
|\1^{(1,1)}_{\bar \eps} (n, t)|
& \le C(T)   \jb{n}^{-4}.
\end{align*}

\noi
This proves \eqref{S3a}.

\medskip

\noi
$\bullet$ {\bf Case 2:}
$(\eps_1, \eps_3, \eps_4) = (\pm 1, \pm1, \mp1)$.

In this case, we have
$|\kk_1(\bar n)|=  \jb{n} + \jb{n_2} -\jb{n_1}$.
Under $n = n_1 + n_2$
and  $|n_1| \geq |n_2|$, we have 
\begin{align}
\jb{n_1} \sim \jb{n}+ \jb{n_2}.
\label{SS1a}
\end{align}
Under $ n = n_1 + n_2$, 
three vectors $n$, $n_1$, and $n_2$ form a triangle,
where we view $n_1$ as a vector based at $n_2$.
Then, 
by the law of cosines, we have
\begin{align}
|n|^2 + |n_2|^2 - |n_1|^2 = 2 |n| |n_2| \cos \big( \angle(n, n_2)\big),  
\label{SS1b}
\end{align}

\noi
where
$\angle(n, n_2)$ denotes the angle between $n$ and $n_2$.
Then, from \eqref{SS1a} and \eqref{SS1b}, 
we have
\begin{align}
\begin{split}
|\kk_1(\bar n)| 
& =  \frac{(\jb{n} + \jb{n_2})^2 -\jb{n_1}^2}
{\jb{n} + \jb{n_2} +\jb{n_1}}
  =  \frac{2\jb{n}\jb{n_2} + |n|^2 + |n_2|^2 - |n_1|^2 + 1}
{\jb{n} + \jb{n_2} +\jb{n_1}}\\
& 
\ges
 \frac{|n| |n_2| (1 - \cos \theta)}
{\jb{n_1}} 
\end{split}
\label{SS2}
\end{align}

\noi
where $\ta = \angle(n_2, -n) \in [0, \pi]$ is the angle between $n_2$ and $-n$.

\smallskip
\noi
{\bf \underline{Subcase 2.i:}}
We first consider the case $1 - \cos \ta \ges 1$.
See Figure~\ref{FIG:2}.
In this case, from \eqref{SS1} and \eqref{SS2}
with Lemma \ref{LEM:SUM}, 
we have 
\begin{align}
\begin{split}
|\1^{(1,1)}_{\bar \eps} (n, t)|
& \le C(T)   
\sum_{n = n_1 + n_2} 
\frac{1}{\jb{n}^{3}\jb{n_1} \jb{n_2}^{3}}\\
& \le C(T)   
\jb{n}^{-4+}, 
\end{split}
\label{SS3}
\end{align}

\noi
yielding \eqref{S3a}.

\begin{figure}[h]
\begin{tikzpicture}

\draw[ decoration={markings,mark=at position 1 with {\arrow[scale=2]{>}}},
    postaction={decorate},
    shorten >=0.4pt] (0,0) node[dot]{}  --(1, 2) 
node[pos=.5,  left] {$n_2$}
;

\draw[ decoration={markings,mark=at position 1 with {\arrow[scale=2]{>}}},
    postaction={decorate},
    shorten >=0.4pt] (1,2) -- (4, 2) 
node[pos=.5, above] {$n_1$}
;

\draw[ decoration={markings,mark=at position 1 with {\arrow[scale=2]{>}}},
    postaction={decorate},
    shorten >=0.4pt] (0,0) node[below]{$O$}-- (4, 2) 
node[pos=0.5, below right] {$n = n_1 + n_2~~$};

\draw[densely dashed](0,0) --(-1.5, -0.75) ;

\draw (66:0.5) 
 arc (60:210:0.5)
node[pos=0.5, left ] {$\ta$};


%

\end{tikzpicture}

\caption{A typical configuration in Subcase 2.i}
\label{FIG:2}
\end{figure}
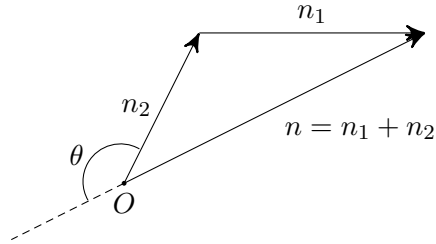

\smallskip
\noi
{\bf \underline{Subcase 2.ii:}}
Next, we  consider the case $1 - \cos \ta \ll1$.
In this case, we have $0\leq \ta \ll 1$, namely, 
$n$ and $n_2$ point in almost opposite directions.
In particular,  we have $1 - \cos \ta \sim \ta^2 \ll 1$.
By dyadically decomposing $n_2$ into $|n_2| \sim N_2$ for dyadic  $N_2 \geq 1$, 
we see that 
for fixed $n \in \Z^3$, 
 the range of possible $n_2$ with $|n_2| \sim N_2$
is constrained to a cone $\mathcal{C}$
whose height is $\sim N_2 \cos \ta \sim N_2$
and  the base disc of radius $\sim N_2 \sin \ta \sim N_2 \ta$
with the direction of  the central axis of the cone  given by $-n$.
Hence, we have $\text{vol}(\mathcal{C}) \sim N_2^3 \ta^2$.
See Figure~\ref{FIG:3}.
Then,  from \eqref{SS1} and \eqref{SS2}
with $|n_1| \ges \max(|n|, |n_2|)$, 
we have 
\begin{align}
\begin{split}
|\1^{(1,1)}_{\bar \eps} (n, t)|
& \le C(T)   
\sum_{\substack{N_2 \geq 1\\\text{dyadic}} }
\frac{1}{\jb{n}^{3}\max(\jb{n}, N_2) N_2^{3} \ta^{2}} N_2^3 \ta^2 \\
& \le C(T)   
\jb{n}^{-4 +}, 
\end{split}
\label{SS4}
\end{align}

\noi
yielding \eqref{S3a}.

\begin{figure}[h]
\begin{tikzpicture}

\draw[ decoration={markings,mark=at position 1 with {\arrow[scale=2]{>}}},
    postaction={decorate},
    shorten >=0.4pt] (0,0) node[below]{$O$}-- (2, 0) 
node[pos=0.5, below right] {$n$};

\draw(0,0) --(-2, -0.2);

\path[fill=gray!40] (0,0) -- (-2,0.2) -- (-2,-0.2) -- cycle;

\draw[fill=gray!40] (-2, 0) ellipse (0.1 and 0.2);

\draw[ decoration={markings,mark=at position 1 with {\arrow[scale=2]{>}}},
    postaction={decorate},
    shorten >=0.4pt] (0,0) node[dot]{}  --(-2, 0.2) 
node[pos=.5,  above ] {$n_2$};

\draw[->] (-1.5, -0.5) -- (-1.7, 0)
node[pos=0,  below ] {$\mathcal{C}$};
 ;

\draw[densely dashed](-2,0) --(-3, 0) ;

\end{tikzpicture}

\caption{A typical configuration in Subcase  2.ii.
Here, we omit the vector $n_1$.}
\label{FIG:3}
\end{figure}
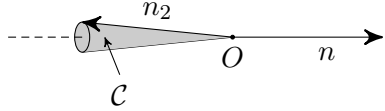

\medskip

\noi
$\bullet$ {\bf Case 3:}
$(\eps_1, \eps_3, \eps_4) = (\pm 1, \pm1, \pm1)$.

In this case, we have
$|\kk_1(\bar n)|=  \jb{n_1} + \jb{n_2} -\jb{n}$.
By the law of cosines, we
have
\begin{align}
|n_1|^2 + |n_2|^2 - |n|^2 = 2 |n_1| |n_2| \cos \big( \angle(-n_1, n_2)\big).
\label{SS4a}
\end{align}

\noi
Then, by proceeding as in Case 2
with \eqref{SS1a} and \eqref{SS4a}, 
we have
\begin{align}
|\kk_1(\bar n)| 
& =  \frac{(\jb{n_1} + \jb{n_2})^2 -\jb{n}^2}
{\jb{n_1} + \jb{n_2} +\jb{n}}
\ges
 \frac{|n_1| |n_2| (1 - \cos \theta)}
{\jb{n_1}} 
\label{SS5}
\end{align}

\noi
where $\ta = \angle(n_1, n_2) \in [0, \pi]$ is the angle between $n_1$ and $n_2$.
When  $1 - \cos \ta \ges 1$, 
we can proceed as in~\eqref{SS3}.
Next, consider the case $1 - \cos \ta \sim \ta^2 \ll 1$.
Since $n = n_1 + n_2$, we see that the angle
$\angle (n, n_2)$
between $n$ and $n_2$ is smaller than 
$\ta = \angle(n_1, n_2)$
in this case.\footnote{Form a triangle
with three vectors $n$, $n_1$, and $n_2$ with $n$ and $n_2$ sharing a common base point
such that $n = n_1 + n_2$.
Then, the angle $\angle (n_1, n_2)$ is an exterior angle to this triangle
and thus we have 
$\angle (n_1, n_2) = \angle (n, n_2) + \angle (-n, -n_1) > \angle (n, n_2)$.}
Hence, with $|n_1|\ges |n|$, 
we can repeat the computation in \eqref{SS4} and obtain the same bound.
This concludes the proof of \eqref{S3a} by choosing $\dl>0$ sufficiently small.

\smallskip

Next, we briefly discuss the difference estimate \eqref{S3b}.
Let $0 \leq t_2 \leq t_1 \leq T$.
We need to estimate
\begin{align}
\E \big[ |\ft{\<20>}(n, t_1) - \ft{\<20>}(n, t_2)|^2\big] 
& = \E \Big[ (\ft{\<20>}(n, t_1) - \ft{\<20>}(n, t_2))\cj{\ft{\<20>}(n, t_1)}\Big]\notag\\
& \hphantom{X}
- \E \Big[ (\ft{\<20>}(n, t_1) - \ft{\<20>}(n, t_2))\cj{\ft{\<20>}(n, t_2)}\Big].
\label{S12}
\end{align}

\noi
From \eqref{S2}, we have
\begin{align}
\ft{\<20>}(n,  t_1) - \ft{\<20>}(n,  t_2)
& =  \int_{t_2}^{t_1} \frac{\sin((t_1 - t') \jb{n})}{\jb{n}}  \ft{\<2>}(n, t')dt'\notag\\
& \hphantom{X}
+ \int_0^{t_2} \frac{\sin((t_1 - t') \jb{n}) - \sin((t_2 - t') \jb{n})}{\jb{n}}  \ft{\<2>}(n, t')dt'.
\label{S13}
\end{align}

\noi
We crudely 
estimate \eqref{S12}
%
by using 
\eqref{S13}, \eqref{V2}, and   
the mean value theorem to control the difference.
As a result, 
we have
\begin{align}
\E \big[ |\ft{\<20>}(n, t_1) - \ft{\<20>}(n, t_2)|^2\big] 
& \les C(T) \jb{n}^{-2} |t_1 - t_2|.
\label{S14}
\end{align}

\noi
By interpolating \eqref{S3a} and \eqref{S14}, 
we obtain \eqref{S3b} for some small $\s \in (0, 1)$.

This completes the proof of Proposition \ref{PROP:sto1}.
\end{proof}

\begin{remark} \label{REM:res}\rm
In Cases 2 and 3, 
we separately estimated
the contributions from (i) $1 - \cos \ta \ges 1$
and (ii) $1 - \cos \ta \ll 1$.
Note that these correspond to
the time non-resonant and (nearly) time resonant case
in the dispersive PDE terminology.
In the time resonant case (ii), 
there was no gain from time integration
and thus we needed to exploit the smallness of the set
(i.e.~the cone $\mathcal{C}$)
for the time resonant case
by observing that the time resonance is caused by 
the parallel interaction of waves, 
i.e. $n$, $n_1$, and $n_2$  (close to) being parallel.
A need for such a geometric consideration is
one difference between  the study of dispersive equations
from that of parabolic equations.

\end{remark}

\section{On the stochastic terms, Part II}
\label{SEC:sto2}

In this section, we study 
 the regularity property of  the resonant product 
$\<21p>$ defined in~\eqref{soX}
(Proposition~\ref{PROP:sto2}).
From \eqref{para1} and the definition of the Littlewood-Paley projector $\F(\P_j f)(n) = \varphi_j(n) \ft f(n)$, we have
 \begin{align*}
 \begin{split}
\ft{\<21p>} (n, t)
& = \sum_{\substack{n = n_1 + n_2+n_3\\ n_1 + n_2 \ne 0}} 
\sum_{|j - k| \leq 2} \varphi_j(n_1 + n_2) \varphi_k(n_3) \\
& \hphantom{XXX}
\times
\int_0^t  \frac{\sin ((t-t') \jb{ n_1 + n_2 })}{\jb{ n_1 + n_2 }} 
\ft{\<1>}(n_1, t')\ft{\<1>}(n_2, t') dt' \cdot  \ft{\<1>}(n_3, t) \\
& \hphantom{X} 
+ \sum_{n_1\in \Z^3}
\sum_{| k| \leq 2}  \varphi_k(n_3)
 \int_0^t \sin (t - t')\cdot \big(  |\ft{\<1>}(n_1, t')|^2  -  \s_{n_1}(t')\big) 
dt' \cdot  \ft{\<1>}(n, t)\\
& =: \ft \RR_1(n, t) + \ft \RR_2(n, t).
 \end{split}
 \end{align*}

\noi
For simplicity of notation, however, 
we write 
 \begin{align}
\begin{split}
\ft \RR_1(n, t)& = \sum_{\substack{n = n_1 + n_2+n_3\\|n_1 + n_2| \sim |n_3|\\ n_1 + n_2 \ne 0}} 
\int_0^t  \frac{\sin ((t-t') \jb{ n_1 + n_2 })}{\jb{ n_1 + n_2 }} 
\ft{\<1>}(n_1, t')\ft{\<1>}(n_2, t') dt' \cdot  \ft{\<1>}(n_3, t), \\
\ft \RR_2(n, t)
& =  \sum_{n_1\in \Z^3} \ind_{|n|\sim 1} \int_0^t \sin (t - t')\cdot \big(  |\ft{\<1>}(n_1, t')|^2  -  \s_{n_1}(t')\big) 
dt' \cdot  \ft{\<1>}(n, t), 
\end{split}
\label{Y1}
 \end{align}

\noi
where  the conditions $|n_1 + n_2| \sim |n_3|$ in the first term
and $|n|\sim 1$ in the second term signify the resonant product $\pe$.
The second term $\RR_2$ in  \eqref{Y1}
corresponds to the contribution
from $n_1 + n_2 = 0$ and is already renormalized from 
the  Wick renormalization: $\<1>^2 \rightsquigarrow \<2>$.
Using~\eqref{eq:wick-squares-corr} and Lemma \ref{LEM:reg}, 
it is easy to see that $\RR_2 \in C(\R_+; C^\infty(\T^3))$
almost surely, 
since $|n| \sim 1$.

In the following, our main goal is to show
 \begin{align}
\E\big[|\ft \RR_1(n, t)  |^2\big]
&  \leq C(t) \jb{n}^{-3+}.
\label{Y2}
  \end{align}

\noi
Then,  Lemma \ref{LEM:reg} allows us to conclude that 
 $\RR_1(t) \in W^{0-, \infty}(\T^3)$ almost surely.
We  decompose $\RR_1$ as 
 \begin{align}
\ft \RR_1(n, t)
& = \sum_{\substack{n = n_1 + n_2+n_3\\|n_1 + n_2| \sim |n_3|\\ (n_1 + n_2)(n_2 + n_3) (n_3 + n_1) \ne 0}} 
\int_0^t  \frac{\sin ((t-t') \jb{ n_1 + n_2 })}{\jb{ n_1 + n_2 }} 
\ft{\<1>}(n_1, t')\ft{\<1>}(n_2, t') dt' \cdot  \ft{\<1>}(n_3, t) \notag\\
& \hphantom{X} 
+ 2 
\int_0^t \ft{\<1>}(n, t') 
\bigg[\sum_{\substack{n_2 \in \Z^3\\|n_2| \sim |n+n_2|\ne 0}}  \frac{\sin ((t-t') \jb{ n + n_2 })}{\jb{ n + n_2 }} \notag\\
& \hphantom{XXXXXXXXXlllllXX}
\times 
\Big(\ft{\<1>}(n_2, t')\ft{\<1>}(-n_2, t) - \s_{n_2}(t,  t')\Big)\bigg] dt'    \notag\\
& \hphantom{X} 
+ 2 
\int_0^t \ft{\<1>}(n, t') 
\bigg[\sum_{\substack{n_2 \in \Z^3\\| n_2| \sim |n+n_2| \ne 0 }} \frac{\sin ((t-t') \jb{ n + n_2 })}{\jb{ n + n_2 }} 
 \s_{n_2}(t, t')\bigg] dt'   \notag\\
& \hphantom{X} 
- \ind_{n\ne0}
\int_0^t  \frac{\sin ((t-t') \jb{ 2n })}{\jb{ 2n }} 
\, (\ft{\<1>}(n, t'))^2dt' \cdot \ft{\<1>}(-n, t)\notag\\
&
=: \ft \RR_{11}(n, t)+  \ft \RR_{12}(n, t)+  \ft \RR_{13}(n, t)+  \ft \RR_{14}(n, t).
\label{Y3}
  \end{align}

\noi
Here, the second term $\RR_{12}$ corresponds to the ``renormalized'' contribution
from $n_1 + n_3= 0$ or $n_2 + n_3= 0$,
while the fourth term corresponds
to the contribution
from $n_1 = n_2 = n = - n_3$.

From \eqref{sconv2}, we have
 \begin{align*}
\E\big[|\ft \RR_{14}(n, t)  |^2\big]
&  \le C(t) \jb{n}^{-8},
  \end{align*}

\noi
satisfying \eqref{Y2}.
Under $|n+ n_2| \sim|n_2|$, we have $|n_2| \ges |n|$.
Then, using a variant of~\eqref{eq:wick-squares-corr},
we obtain 
 \begin{align*}
\E\big[|\ft \RR_{12}(n, t)  |^2\big]
&  \le C(t)
\sum_{\substack{n_2 \in \Z^3\\|n + n_2| \sim |n_2|}}
\frac{1}{\jb{n}^2 \jb{n_2}^6}
 \les \jb{n}^{-5},
\end{align*}

\noi
satisfying \eqref{Y2}.

Given $n \in \Z^3$, define $\NR(n)$ by 
\begin{align*}
\NR(n) 
= \big\{(n_1, n_2, n_3)\in \Z^3: \,
& n = n_1 + n_2+n_3, \ |n_1 + n_2| \sim |n_3|, \\
&  (n_1 + n_2)(n_2 + n_3) (n_3 + n_1) \ne 0 \big\}.
\end{align*}

\noi
Then, with a shorthand notation $n_{ij} = n_i + n_j$, we have
 \begin{align*}
\E\big[|\ft \RR_{11} & (n, t)  |^2\big]\\
& = \E\Bigg[\sum_{(n_1, n_2, n_3) \in \NR(n)}
\int_0^t  \frac{\sin ((t-t_1) \jb{ n_{12} })}{\jb{ n_{12} }} 
\ft{\<1>}(n_1, t_1)\ft{\<1>}(n_2, t_1) dt' \cdot  \ft{\<1>}(n_3, t)\\
& \hphantom{X}
\times  \sum_{(n_1', n_2', n_3') \in \NR(n)}
\int_0^t  \frac{\sin ((t-t_2) \jb{ n_{12}'  })}{\jb{ n_{12}' }} 
\cj{\ft{\<1>}(n_1', t_2)\ft{\<1>}(n_2', t_2)} dt' \cdot  \cj{\ft{\<1>}(n_3', t)}\Bigg].
  \end{align*}

\noi
In order to compute the expectation above, 
we need to take all possible pairings between $(n_1, n_2, n_3)$
and $(n_1', n_2', n_3')$.
By Jensen's inequality, however, 
we see that it suffices to consider the case
$n_j = n_j'$, $j = 1, 2, 3$.
See the discussion on $\<31p>$ in Section 4 of \cite{MWX}.
See also Section~10 in~\cite{Hairer}. 
Hence, by Wick's theorem and  \eqref{S4}, we have
 \begin{align*}
\E\big[|\ft \RR_{11}(n, t)  |^2\big]
&  \les 
\sum_{\substack{n = m + n_3\\ |m|\sim|n_3|}}
\E \big[ |\ft{\<20>}(m, t)|^2\big]
\E \big[ |\ft{\<1>}(n_3, t)|^2\big]\\
\intertext{From \eqref{sconv3}, \eqref{S3a},  and Lemma \ref{LEM:SUM} (ii),}
& \le C(t)
\sum_{\substack{n = m + n_3\\ |m|\sim|n_3|}} \frac{1}{\jb{m}^{4-} \jb{n_3}^2}
\le C(t) \jb{n}^{-3+}, 
  \end{align*}
verifying \eqref{Y2}.
Note that in evaluating the last sum, we crucially used the fact that the product 
is a resonant product.

It remains to study the third term $\ft\RR_{13}$ 
on the right-hand side of  \eqref{Y3}.
Let  $0 \leq t_2 \leq t_1 \leq T$.
Then, from \eqref{Y3} with \eqref{sigma2},  we have
\begin{align*}
\begin{split}
\E\big[|\ft \RR_{13}(n, t)|^2\big]
& =8
\sum_{k_0, k_1, k_2 \in \{1, 2\}}
\int_0^t 
\int_0^{t_1} 
\s_{n}^{(k_0)}(t_1, t_2)
 \\
& \hphantom{XX}
\times 
\bigg[\sum_{\substack{n_2 \in \Z^3\\| n_2| \sim |n+n_2| \ne0}} \frac{\sin ((t-t_1) \jb{ n + n_2 })}{\jb{ n + n_2 }} 
\s_{n_2}^{(k_1)}(t, t_1)\bigg]  \\
& \hphantom{XX}
\times 
\bigg[\sum_{\substack{n'_2 \in \Z^3\\| n'_2| \sim |n+n'_2| \ne0}} \frac{\sin ((t-t_2) \jb{ n + n'_2 })}{\jb{ n + n'_2 }} 
\s_{n_2'}^{(k_2)}(t, t_2)\bigg]
   dt_2 dt_1 \\ & =: \sum_{k_0, k_1, k_2 \in \{1, 2\}} \1^{(k_0,k_1,k_2)}(n,t) , 
\end{split}
  \end{align*}

\noi
where  $\s_{n}(t, t' )   = \s_{n}^{(1)}(t, t' )  + \s_{n}^{(2)}(t, t' )  $
as in \eqref{S6}.
In the following, we only consider the contribution
from 
  $(k_0, k_1, k_2) = (1, 1, 1)$
  since the other cases follow in a similar (but easier) manner.

From  \eqref{S6}, 
we have 
\begin{align}
\1^{(1,1,1)}& (n,t)
 = 
\int_0^t 
\int_0^{t_1} 
 \frac{ \cos((t_1 - t_2)\jb{n}) }{ \jb{n}^2} t_2
\notag  \\
& \hphantom{XXXX}
\times 
\bigg[\sum_{\substack{n_2 \in \Z^3\\| n_2| \sim |n+n_2|\ne0}} \frac{\sin ((t-t_1) \jb{ n + n_2 })}{\jb{ n + n_2 }} 
 \frac{ \cos((t - t_1)\jb{n_2}) }{ \jb{n_2}^2} t_1 \bigg] \notag  \\
& \hphantom{XXXX}
\times 
\bigg[\sum_{\substack{n'_2 \in \Z^3\\| n'_2| \sim |n+n'_2|\ne0}} \frac{\sin ((t-t_2) \jb{ n + n'_2 })}{\jb{ n + n'_2 }} 
 \frac{ \cos((t - t_2)\jb{n'_2}) }{ \jb{n'_2}^2} t_2 \bigg]
   dt_2 dt_1\notag  \\
& =  -\frac{1}{32}\sum_{\substack{\eps_j \in \{-1, 1\}\\j = 1, \dots, 5}}
\sum_{\substack{n_2 \in \Z^3\\| n_2| \sim |n+n_2|\ne 0}} 
\sum_{\substack{n'_2 \in \Z^3\\| n'_2| \sim |n+n'_2| \ne0}}
\frac{
\eps_1\eps_2e^{i  t (\eps_1 \jb{n+n_2} 
+   \eps_2  \jb{n+n_2'}
+ \eps_3  \jb{n_2} +  \eps_4  \jb{n_2'})}
}{\jb{n}^2 \jb{n+n_2} \jb{n_2}^2 \jb{n+n_2'} \jb{n_2'}^2}
\notag  \\
& \hphantom{XXXX}
\times  \int_0^t 
t_1  e^{ - i t_1  \kk_3(\bar n)}
 \int_0^{t_1} 
t_2^2  e^{- i t_2\kk_4(\bar n')} 
dt_2 dt_1,
\label{Y5}  
\end{align}

\noi
where
$\kk_3(\bar n)$ and $\kk_3(\bar n)$ are defined by 
\begin{align}
\begin{split}
\kk_3(\bar n) 
& = \eps_1 \jb{n+n_2}  
+ \eps_3  \jb{n_2}
 - \eps_5  \jb{n}, \\
\kk_4(\bar n') 
& = 
 \eps_2  \jb{n+n_2'}
+  \eps_4  \jb{n_2'}
+  \eps_5  \jb{n}.
\end{split}
\label{Y6}
\end{align}

\noi
Under the constraint $|n+ n_2 |\sim |n_2|$
and $|n+ n_2'| \sim |n_2'|$, 
we have $|n_2 |, |n_2'| \ges |n|$.
In the following, we also assume 
$|n_2| \ges |n_2'|$.

We   decompose $\1^{(1,1,1)} (n,t)$ according to the value of
 $\bar \eps = (\eps_1, \dots, \eps_5)\in\{\pm 1\}^5$ and write
\[
\1^{(1,1,1)} (n,t) =:\sum_{\bar\eps \in \{\pm 1\}^5 } \1^{(1,1,1)}_{\bar \eps} (n,t).
\]

\noi
In the following, we study $\1^{(1,1,1)}_{\bar \eps} $
for each fixed $\bar \eps\in\{\pm 1\}^5$.
Note that the sum over $n_2$ and $n_2'$ in~\eqref{Y5} are not absolutely convergent
at a first glance.
In many cases, we make use of dispersion (i.e.~time oscillation)
and show that they are indeed absolutely convergent.
In Case 3 below, however, there is a subcase,
where we show that the sum is only conditionally convergent.
In this case, it is understood that the sum is first studied
under the constraint $|n_2|, |n_2'| \leq N$ for some $N \geq 1$
and that the sum remains bounded in taking a limit $N \to \infty$.
We do not mention this procedure in an explicit manner
in the following.

By first  integrating~\eqref{Y5} in $t_1$
when  $|\kk_3(\bar n)|\geq 1$
and simply bounding the integral in~\eqref{Y5} by $C(T)$
when  $|\kk_3(\bar n)|<  1$, 
we have
\begin{align}
\begin{split}
|\1^{(1,1,1)}_{\bar \eps} (n,t)|
& \le \frac{C(T)}{\jb{n}^2}
\sum_{\substack{n_2 \in \Z^3\\|n + n_2| \sim |n_2|}} 
\frac{1}
{ \jb{n+n_2} \jb{n_2}^2 (1+ |\kk_3(\bar n)|)}\\
& \hphantom{X}
\times
\sum_{\substack{n'_2 \in \Z^3\\|n + n'_2| \sim |n'_2|}}
\frac{\ind_{\{|n_2| \ges |n_2'|\}}}
{ \jb{n+n_2'} \jb{n_2'}^2}.
\end{split}
\label{Y7}
\end{align}

\noi
$\bullet$ {\bf Case 1:}
 $(\eps_1, \eps_3, \eps_5) = (\pm 1, \pm 1, \mp1)$.
\quad  
In this case, it follows from \eqref{Y6} that  $|\kk_3(\bar n) |\ges \jb{n_2}$.
By writing $(1+ |\kk_3(\bar n)|)^{-1} \les \jb{n}^{-1+2\dl} \jb{n_2}^{-\dl}
\jb{n_2'}^{-\dl}$ in \eqref{Y7} for sufficiently small  $\dl > 0$ 
and applying Lemma~\ref{LEM:SUM}, 
we obtain
\begin{align}
|\1^{(1,1,1)}_{\bar \eps} (n,t)|
\le C(T) \jb{n}^{-3}.
\label{Y10}
\end{align}

\noi
$\bullet$ {\bf Case 2:} $(\eps_1, \eps_3, \eps_5) = (\pm 1, \pm 1, \pm1)$.
\quad 
If   $|n + n_2|\sim |n_2| \gg |n|$, 
then we have $|\kk_3(\bar n) |\ges \jb{n_2}$
and hence \eqref{Y10} holds as above.
Otherwise, we have 
$|n + n_2|\sim |n_2| \sim |n|$.
In this case, 
by  $\jb{n}^{-2\dl} \les \jb{n_2}^{-\dl}\jb{n_2'}^{-\dl}$
for $\dl > 0$.
Then, we have
\begin{align}
|\1^{(1,1,1)}_{\bar \eps} (n,t)|
& \le \frac{C(T)}{\jb{n}^{2-2\dl}}
\sum_{\substack{n_2 \in \Z^3\\|n + n_2| \sim |n_2|}} 
\frac{1}
{ \jb{n+n_2} \jb{n_2}^{2+\dl} (1+ |\kk_3(\bar n)|)} \notag \\
& \hphantom{X}
\times
\sum_{\substack{n'_2 \in \Z^3\\|n + n'_2| \sim |n'_2|}}
\frac{\ind_{\{|n_2| \ges |n_2'|\}}}
{ \jb{n+n_2'} \jb{n_2'}^{2+\dl}}  \notag \\
& \le \frac{C(T)}{\jb{n}^{2-\dl}}
\sum_{\substack{n_2 \in \Z^3\\|n + n_2| \sim |n_2|}} 
\frac{1}
{ \jb{n+n_2} \jb{n_2}^{2+\dl} (1+ |\kk_3(\bar n)|)}.
\label{Y10a}
\end{align}

\noi
We can now proceed as in Case 3 of the proof of Proposition \ref{PROP:sto1}
by replacing $(n, n_1, n_2)$ with $(n, n+n_2, -n_2)$.
In particular, from \eqref{SS5}, we have 
\begin{align}
|\kk_3(\bar n)| 
\ges
 |n_2| (1 - \cos \theta)
\label{Y10b}
\end{align}

\noi
where $\ta = \angle(n + n_2, - n_2) \in [0, \pi]$ is the angle between $n + n_2$ and $-n_2$.
When  $1 - \cos \ta \ges 1$, 
by summing over $n_2$ in \eqref{Y10a}
with \eqref{Y10b} and Lemma \ref{LEM:SUM}, 
we obtain \eqref{Y10}.

Next, consider the case  $1 - \cos \ta \sim \ta^2 \ll 1$.
Since $n = (n + n_2) + (- n_2)$, we see that the angle
$\theta_0 = \angle (n, - n_2)$
between $n$ and $- n_2$ is smaller than 
$\ta = \angle(n+n_2, -n_2)$
in this case.
Moreover, 
we see that 
for fixed $n \in \Z^3$, 
 the range of possible $- n_2$ with $|n_2| \sim N_2$, dyadic $N_2\geq 1$, 
is constrained to a cone
whose height is $\sim N_2 |\cos \ta_0| \sim N_2$
and the base disc of radius $\sim N_2 \sin \ta_0 \les N_2 \ta$. 
Then,  from \eqref{Y10a} and \eqref{Y10b}
with $|n + n_2|\sim |n_2|\sim|n|$, 
we have 
\begin{align*}
\begin{split}
|\1^{(1,1,1)}_{\bar \eps} (n,t)|
& \le \frac{C(T)}{\jb{n}^{2-\dl}}
\sum_{\substack{N_2 \sim \jb{n} \\\text{dyadic}} }
\frac{1}{N_2^{4+\dl} \, \ta^{2}} N_2^3 \ta^2 \\
& \le C(T)   
\jb{n}^{-3 +\dl}, 
\end{split}
\end{align*}

\noi
yielding \eqref{Y10}.

\medskip

\noi
$\bullet$ {\bf Case 3:}
 $\eps_1 = - \eps_3$.
\quad  
First, suppose that $|n| \geq |n_2|^{\g}$ for some small $\g>0$
(to be chosen later).
Then, 
with  $\jb{n}^{-2\dl} \les \jb{n_2}^{-\g \dl}\jb{n_2'}^{-\g \dl}$
for $\ld > 0$, 
we can proceed as in Case 2 
(but using the computation in Case 2 of the proof of Proposition \ref{PROP:sto1}
by replacing $(n, n_1, n_2)$ with $(n, n+n_2, -n_2)$ or $(n, -n_2, n+n_2)$)
and obtain
\begin{align*}
|\1^{(1,1,1)}_{\bar \eps} (n,t)|
& \le \frac{C(T)}{\jb{n}^{2-2\dl}}
\sum_{\substack{n_2 \in \Z^3\\|n + n_2| \sim |n_2|}} 
\frac{1}
{ \jb{n+n_2} \jb{n_2}^{2+\g\dl} (1+ |\kk_3(\bar n)|)} \notag \\
& \hphantom{X}
\times
\sum_{\substack{n'_2 \in \Z^3\\|n + n'_2| \sim |n'_2|}}
\frac{\ind_{\{|n_2| \ges |n_2'|\}}}
{ \jb{n+n_2'} \jb{n_2'}^{2+\g\dl}}  \notag \\
& \le C(T) \jb{n}^{-3 + (2-\g)\dl}, 
\end{align*}

\noi
verifying \eqref{Y2} by choosing $\dl > 0$ sufficiently small.

Next, we consider the case 
$|n| \ll |n_2|^{\g}$.
In this case, we are not able to prove absolute summability in~\eqref{Y7} 
since  $\kk_3(\bar n)$ does not have any  good lower bound, 
and thus we need to proceed more carefully.
By writing out the contribution from the sum over $n_2$ in \eqref{Y5}
(namely, ignoring the sum over $n_2'$), we have
\begin{align}
 \int_0^t  t_1  e^{ i t_1 \eps_5\jb{n}}
\sum_{\substack{n_2 \in \Z^3\\|n + n_2| \sim |n_2|\\|n| \ll |n_2|^{\g}}} 
\frac{\sin ((t - t_1) (\jb{n+ n_2} - \jb{n_2}) )}{\jb{n}^2 \jb{n+n_2} \jb{n_2}^2 }
 dt_1.
\label{Y11a}  
\end{align}

\noi
By going back to the definition \eqref{para1}
of the resonant product $\pe$, 
we can write down the  sum over $n_2$ in \eqref{Y11a} 
as 
\begin{align}
\sum_{j \in \N_0}
\sum_{\substack{n_2 \in \Z^3\\|n| \ll |n_2|^\g}}\varphi_j(n_2)
\sum_{|k - j|\leq 2}\varphi_k(n + n_2), 
\label{Y12}
\end{align}

\noi
where $\varphi_j$ is as in \eqref{phi1}.
Thanks to the restriction $|n| \ll |n_2|^\g$ with small $\g > 0$, 
the sum in \eqref{Y12} is in fact given by 
\begin{align}
\sum_{j \in \N_0}
\sum_{\substack{n_2 \in \Z^3\\|n| \ll |n_2|^\g}}\varphi_j(n_2).
\label{Y13}
\end{align}

\noi
While the sum over $n_2$ in \eqref{Y11a} is not absolutely convergent, 
we do not expect to gain anything from the time integration in $t_1$
in this case
due to the lack of a good lower bound on $\kk_3(\bar n)$.
The reduction to \eqref{Y13}, however, 
allows us to exploit 
the symmetry $n_2 \leftrightarrow -n_2$
and the oscillatory nature of the sine kernel in \eqref{Y11a}.

By the Taylor remainder theorem, we have
\begin{align}
\Theta^\pm(n, n_2)
:=  \jb{n \pm n_2 }- \jb{n_2}
 \mp \frac{\jb{n, n_2}}{\jb{n_2}}
 = O\bigg(\frac{\jb{n}^2}{\jb{n_2}}\bigg), 
\label{Y14}
\end{align}

\noi
where $\jb{n, n_2} = \jb{n, n_2}_{\R^3}$
denotes the standard inner product on $\R^3$.
Let $\Ld$ be the index set $\text{``}\sim \Z^3 / 2\text{''}$
in \eqref{index}.
Then, with \eqref{Y14} and the mean value theorem, we have
\begin{align}
\begin{split}
\eqref{Y11a} &
 =   
 \int_0^t  t_1  e^{ i t_1 \eps_5\jb{n}}
\sum_{j \in \N_0}
\sum_{\substack{n_2 \in \Z^3\\|n| \ll |n_2|^\g}}\varphi_j(n_2)
\frac{\sin ((t - t_1) (\jb{n+ n_2} - \jb{n_2}) )}{\jb{n}^2 \jb{n+n_2} \jb{n_2}^2 }
 dt_1\\
& =    \int_0^t  t_1  e^{ i t_1 \eps_5\jb{n}}
\sum_{j \in \N_0}
\sum_{\substack{n_2 \in \Ld\\|n| \ll |n_2|^\g}}\frac{\varphi_j(n_2)}
{\jb{n}^2 \jb{n+n_2} \jb{n_2}^2 }\\
& \hphantom{X}
\times
\Big\{ \sin ((t - t_1) (\jb{n+ n_2} - \jb{n_2}) )
+ \sin ((t - t_1) (\jb{n- n_2} - \jb{n_2}) )\Big\}
 dt_1\\
& =    \int_0^t  t_1  e^{ i t_1 \eps_5\jb{n}}
\sum_{j \in \N_0}
\sum_{\substack{n_2 \in \Ld\\|n| \ll |n_2|^\g}}\frac{\varphi_j(n_2)}{\jb{n}^2 \jb{n+n_2} \jb{n_2}^2 }\\
& \hphantom{X}
\times
   \bigg\{
\sin \bigg((t - t_1)\Big(  \frac{\jb{n, n_2}}{\jb{n_2}} + \Theta^+(n, n_2)\Big)\bigg) \\
& \hphantom{XXXXX}
- \sin \bigg((t - t_1)\Big(  \frac{\jb{n, n_2}}{\jb{n_2}} - \Theta^-(n, n_2)\Big)\bigg)
 \bigg\} dt_1
\\
 & \leq C(T) 
\sum_{j \in \N_0}
\sum_{\substack{n_2 \in \Ld\\|n| \ll |n_2|^\g}}\frac{\varphi_j(n_2)}{\jb{n}^2 \jb{n+n_2} \jb{n_2}^2 }
\frac{\jb{n}^{4\dl}}{\jb{n_2}^{2\dl}}\\
 & \leq \frac{C(T)}{\jb{n_2'}^{\dl}}
\sum_{\substack{n_2 \in \Ld\\|n| \ll |n_2|^\g}}\frac{1}{\jb{n}^{2-4\dl} \jb{n_2}^{3+\dl} }
\end{split}
\label{Y14a}
\end{align}

\noi
for any $\dl\in  [0, \frac 12]$.
Fix small $\dl > 0$. 
By applying  Lemma \ref{LEM:SUM} to sum over $n_2$
and $n_2'$
and then using the condition $|n| \ll |n_2|^\g$, we obtain
\begin{align*}
|\1^{(1,1,1)}_{\bar \eps} (n,t)|
& \le \frac{C(T)}{\jb{n}^{2-3\dl}} \cdot \frac{1}{\jb{n}^{\frac{\dl}{\g}}}
\le C(T){\jb{n}^{-3}}
\end{align*}

\noi
by choosing $\g = \g(\dl)> 0$ sufficiently small.
This proves \eqref{Y2}.

Next, we briefly discuss the difference estimate.
In view of Lemma \ref{LEM:reg}, we need to show that there exists $\s \in (0, 1)$ such that 
\begin{align}
\E \big[ |\ft \RR_1 (n, t_1) - \ft \RR_1 (n, t_2)|^2\big] 
& \le C(T) \jb{n}^{-3 + \s + }|t_1 - t_2|^\s  
\label{Y15}
\end{align}

\noi
for any $n \in \Z^3$, $0 \leq t, t_1, t_2 \leq T$
with $0 < |t_1 - t_2| < 1$.
As in  \eqref{S12}, 
we need to estimate
\begin{align}
\E \big[ |\ft \RR_1(n, t_1) - \ft \RR_1(n, t_2)|^2\big] 
& = \sum_{j = 1}^2 (-1)^{j+1}  \E \big[ (\ft \RR_1(n, t_1) - \ft \RR_1(n, t_2))\cj{\ft{\<21p>}(n, t_j)}\big].
\label{Y16}
\end{align}

\noi
As for  $\RR_{11}$, $\RR_{12}$, and $\RR_{14}$ in \eqref{Y3}, 
we can crudely estimate them and obtain
\begin{align}
\E \big[ |\ft \RR_{1j} (n, t_1) - \ft  \RR_{1j} (n, t_2)|^2\big] 
& \les C(T) \jb{n}^{-2+} |t_1 - t_2|,
\label{Y17}
\end{align}
for $j = 1, 2, 4$, since the relevant summations are absolutely convergent.
Then, \eqref{Y15} follows from interpolating \eqref{Y17}
and
\begin{align*}
\E \big[ |\ft \RR_{1j} (n, t_1) - \ft  \RR_{1j} (n, t_2)|^2\big] 
& \les C(T) \jb{n}^{-3+} .
\end{align*}

It remains to discuss $\RR_{13}$.
In view of \eqref{Y5} and \eqref{Y16}, we need to consider an expression like
\begin{align*}
\int_0^{t_2} 
\int_0^{\tau_1} 
&  \frac{ \cos((\tau_1 - \tau_2)\jb{n}) }{ \jb{n}^2}  \tau_2
\bigg[\sum_{\substack{n_2 \in \Z^3\\|n + n_2| \sim |n_2|}} 
\frac{\sin ((t-\tau_1) \jb{ n + n_2 })}{\jb{ n + n_2 }} 
 \frac{ \cos((t - \tau_1)\jb{n_2}) }{ \jb{n_2}^2} \tau_1 \bigg] \bigg|_{t = t_2}^{t_1} \notag  \\
&
\times 
\bigg[\sum_{\substack{n'_2 \in \Z^3\\|n + n'_2| \sim |n'_2|}} 
\frac{\sin ((t_j-\tau_2) \jb{ n + n'_2 })}{\jb{ n + n'_2 }} 
 \frac{ \cos((t_j  - \tau_2)\jb{n'_2}) }{ \jb{n'_2}^2} \tau_2 \bigg]
   d\tau_2 d\tau_1
\end{align*}

\noi
for $0 \leq \tau_2 \leq \tau_1 \leq T$.
Then, by repeating the computations in Cases 1 - 3 above
and applying the mean value theorem, 
we directly obtain \eqref{Y15}.
Note that some of the relevant summations are not absolutely convergent in this case
and hence we can not proceed with a crude estimate and 
interpolation.  Compare this with the parabolic setting.
See Section 5 in \cite{MWX}.

This completes the proof of Proposition \ref{PROP:sto2}.

\section{Paracontrolled operators}
\label{SEC:po}

We first present the proof of 
Lemma \ref{LEM:IV} on the regularity of $Z = \big(S(t) (X_0,   X_1)\big) \pe \<1> $.

\begin{proof}[Proof of Lemma \ref{LEM:IV}]
Let $H(t) = S(t) (X_0, X_1)$.
Under
$n = n_1 + n_2$ and $|n_1|\sim |n_2|$, 
we have $\jb{n} \les \jb{n_1} \sim \jb{n_2}$.
Then, it follows from   Minkowski's integral inequality, 
the Wiener chaos estimate (Lemma~\ref{LEM:hyp}), 
independence of $\ft{\<1>}(n_2, t)$, and~\eqref{sconv3}
that for any $p \geq 2$,  we have
\begin{align}
\Big\|\| Z (t) \|_{H^s} \Big\|_{L^p(\O)}
& \le \Big\| \| \jb{n}^{s} \F_x (H \pe \<1>) (n, t) \|_{L^p(\O)} \Big\|_{\l^2_n}\notag\\
& \leq p^\frac{1}{2}
 \Big\| \| \jb{n}^{s} \F_x (H \pe \<1>) (n, t) \|_{L^2(\O)} \Big\|_{\l^2_n}\notag\\
&  \sim p^\frac{1}{2}  
\bigg(\sum_{n \in \Z^3} 
\jb{n}^{2s}
\sum_{\substack{n = n_1 + n_2\\|n_1|\sim |n_2|}}
 |\ft H (n_1, t)|^2 
\E\big[|\ft{\<1>}(n_2, t)|^2 \big] \bigg)^\frac{1}{2}\notag\\
&  \les   p^\frac{1}{2}
\bigg( \sum_{n \in \Z^3} 
\jb{n}^{2(s - s_1 - 1)}
\sum_{n_1 \in \Z^3}
 \jb{n_1}^{2s_1} |\ft H (n_1, t)|^2 \bigg)^\frac{1}{2}
\notag\\
& \les  p^\frac{1}{2}
\| (X_0, X_1) \|_{\H^{s_1}}
\label{XX1}
\end{align}

\noi
provided that $s < s_1 - \frac 12$.
Fix $\eps > 0$ small.
Then, 
by writing
\[(H \pe \<1>) (t_1)- (H \pe \<1>) (t_2)
= H(t_1) \pe \big(\<1> (t_1) - \<1>(t_2)\big)
+ \big(H(t_1) - H(t_2) \big) \pe \<1> (t_2)\]

\noi
for $0 \leq t_2 \leq t_1$, 
we can repeat the computation in \eqref{XX1}.
In particular, by 
 \eqref{V0a} and the mean value theorem, we obtain
\begin{align*}
\Big\|\| (H \pe \<1>) (t_1)  - & (H \pe \<1>) (t_2) \|_{H^s} \Big\|_{L^p(\O)}\\
& \les  C(t_2) p^\frac{1}{2} |t_1 - t_2|^\frac{\eps}{2}
\| (X_0, X_1) \|_{\H^{s_1}}
+ p^\frac{1}{2} \| H (t_1) - H (t_2) \|_{H^{s_1-\frac{\eps}{2}}}\\
& \les  C(t_2) p^\frac{1}{2} |t_1 - t_2|^\frac{\eps}{2}
\| (X_0, X_1) \|_{\H^{s_1}}, 
\end{align*}

\noi
provided that $s < s_1 - \frac 12 - \frac{\eps}{2}$.
Therefore, by taking large $p = p(\eps) \gg1 $, 
we conclude from Kolmogorov's continuity criterion
(\cite[Theorem 8.2]{Bass})
that 
$Z$ belongs to 
$C([0,T];H^{ s_1 - \frac 12 -\eps}(\T^3))$
almost surely.
\end{proof}
	
%
%
%
%
%
%
%
%

The remaining part of this section is devoted 
to studying the mapping properties
of 
the paracontrolled operators
$ \If_{\pl}^{(1)}$ in \eqref{X3}
and $\If_{\pl, \pe}$ in \eqref{X6}.

We first study the regularity property of the paracontrolled operator
$\If_{\pl}^{(1)}$ defined in~\eqref{X3}.
By writing out the frequency relation  $|n_2|^\theta \les |n_1| \ll |n_2|$
more carefully, we have
\begin{align}
\begin{split}
 \If_{\pl}^{(1)} (w) (t)
 &   =  \sum_{n \in \Z^3}
e_n  \sum_{n =  n_1 +  n_2}
\sum_{ \ta k + c_0 \leq j < k-2}
\varphi_j(n_1) \varphi_k(n_2)  \\
& \hphantom{XXXXX}
\times 
\int_0^t \frac{\sin ((t - t') \jb{n})}{\jb{n}} 
\ft w(n_1, t')\,  \ft{ \<1>}(n_2, t') dt', 
\end{split}
\label{XX2a}
\end{align}

\noi
where $c_0 \in \R$ is some fixed constant.
In the following, we establish
the mapping property of 
$\If_{\pl}^{(1)}$ in a deterministic manner
by using a pathwise regularity of the stochastic convolution~$\<1>$.

Given $\Xi  \in C(\R_+; W^{-\frac 12 -\eps}(\T^3))$ for some small $\eps > 0$, 
define a paracontrolled operator  $\If_{\pl}^{(1), \Xi}$ by 
\begin{align}
\begin{split}
 \If_{\pl}^{(1), \Xi} (w) (t)
 &  : =  \sum_{n \in \Z^3}
e_n  \sum_{n =  n_1 +  n_2}
\sum_{ \ta k + c_0 \leq j < k-2}
\varphi_j(n_1) \varphi_k(n_2)  \\
& \hphantom{XXXXX}
\times 
\int_0^t \frac{\sin ((t - t') \jb{n})}{\jb{n}} 
\ft w(n_1, t')\,  \ft{ \Xi}(n_2, t') dt'.
\end{split}
\label{XX2}
\end{align}

\noi
Note that we have $\If_{\pl}^{(1)} = \If_{\pl}^{(1), \<1>}$, 
i.e.~with $\Xi = \<1>$. 

\begin{lemma}\label{LEM:sto3}
Let  $0 < s_1 < \frac 12$ and $T > 0$.
Then, given small $\theta > 0$, 
there exists small $\eps = \eps(s_1, \ta) > 0$
such that 
given $\Xi  \in C(\R_+; W^{-\frac 12 -\eps, \infty}(\T^3))$, 
the paracontrolled operator $ \If_{\pl}^{(1), \Xi}$ defined in \eqref{XX2} belongs to the class: 
\begin{align}
 \L_2 = \L( C([0, T]; H^{s_1}(\T^3) )
 \, ; \, 
C([0, T]; H^{\frac 12 + 2\eps}(\T^3) ).
\label{L1}
\end{align}

\end{lemma}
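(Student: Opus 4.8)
The plan is to prove Lemma~\ref{LEM:sto3} by a purely frequency-space argument that exploits \emph{no} time oscillation: one crudely bounds $\big|\tfrac{\sin((t-t')\jb n)}{\jb n}\big|\le\jb n^{-1}$ and $\big|\int_0^t(\cdots)\,dt'\big|\le T\sup_{0\le t'\le T}(\cdots)$, so that the single nontrivial ingredient is the frequency localization $|n_2|^{\theta}\les|n_1|\ll|n_2|$ built into $\If_{\pl}^{(1),\Xi}$ in~\eqref{XX2}. On this region $\jb n\sim\jb{n_2}$ and, crucially, the low-frequency factor $w$ enters only at dyadic scales $2^j$ with $j\gtrsim\theta k$, where $2^k\sim\jb n\sim\jb{n_2}$; since $s_1>0$, summing $w$ over those scales produces a gain of $2^{-\theta k s_1}$, and the whole point is that this gain beats the loss coming from the negative Sobolev regularity of $\Xi$ once $\eps$ is taken small compared with $\theta s_1$.

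First I would fix a dyadic $N=2^k$ and control the frequencies $|n|\sim N$. On the support of the multiplier in~\eqref{XX2} one has $|n_2|\sim|n|\sim N$, $\jb n\sim N$, and $|n_1|\sim 2^j$ with $\theta k+c_0\le j<k-2$; writing $g_k(t')=\big(\sum_{\theta k+c_0\le j<k-2}\P_j w(t')\big)\cdot\P_k\Xi(t')$ (whose spectrum sits, up to harmless adjacent shells, in $\{|n|\sim N\}$), Minkowski's integral inequality and Plancherel give
\[
\Big\|\jb n^{\frac12+2\eps}\,\F_x\big[\If_{\pl}^{(1),\Xi}(w)(t)\big](n)\Big\|_{\l^2_{|n|\sim N}}
\les N^{-\frac12+2\eps}\,T\sup_{0\le t'\le T}\big\|g_k(t')\big\|_{L^2}.
\]
For the product $g_k$ I would use the triangle inequality in $j$, H\"older ($L^2\times L^\infty$), the Bernstein-type bound $\|\P_k\Xi\|_{L^\infty}\les 2^{k(\frac12+\eps)}\|\Xi\|_{W^{-\frac12-\eps,\infty}}$ (valid since $\jb\nb^{-\frac12-\eps}\Xi\in L^\infty$), and $\|\P_j w\|_{L^2}\les 2^{-js_1}a_j$ with $\|\{a_j\}\|_{\l^2_j}\les\|w\|_{H^{s_1}}$. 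Since $s_1>0$, Cauchy--Schwarz in $j$ gives $\sum_{j\ge\theta k}2^{-js_1}a_j\les 2^{-\theta k s_1}\|w\|_{H^{s_1}}$, the geometric series $\sum_{j\ge\theta k}2^{-2js_1}$ being controlled by its smallest index. Hence $\|g_k(t')\|_{L^2}\les 2^{-\theta k s_1}2^{k(\frac12+\eps)}\|w(t')\|_{H^{s_1}}\|\Xi(t')\|_{W^{-\frac12-\eps,\infty}}$, so the shell contribution above is $\les 2^{k(3\eps-\theta s_1)}T\|w\|_{C_TH^{s_1}}\|\Xi\|_{C_TW^{-\frac12-\eps,\infty}}$; summing the squares over dyadic $N=2^k$ converges as soon as $3\eps<\theta s_1$. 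Choosing $\eps=\eps(s_1,\theta)>0$ with $\eps<\tfrac13\theta s_1$ therefore yields
\[
\big\|\If_{\pl}^{(1),\Xi}(w)(t)\big\|_{H^{\frac12+2\eps}}\les T\,\|w\|_{C_TH^{s_1}}\|\Xi\|_{C_TW^{-\frac12-\eps,\infty}}
\]
uniformly in $t\in[0,T]$, which is the desired operator bound.

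Finally, for continuity of $t\mapsto\If_{\pl}^{(1),\Xi}(w)(t)$ into $H^{\frac12+2\eps}$, I would write, for $0\le t_2\le t_1\le T$, the difference $\If_{\pl}^{(1),\Xi}(w)(t_1)-\If_{\pl}^{(1),\Xi}(w)(t_2)$ as the sum of a boundary term $\int_{t_2}^{t_1}(\cdots)\,dt'$ — handled exactly as above with $T$ replaced by $|t_1-t_2|$ — and a term carrying the factor $\sin((t_1-t')\jb n)-\sin((t_2-t')\jb n)$, which is bounded by $\min\!\big(2,(\jb n|t_1-t_2|)^{\dl}\big)$ for a small $\dl>0$; the extra $\jb n^{\dl}$ is absorbed by the slack in $3\eps<\theta s_1$, after also invoking continuity of $t\mapsto w(t)$ in $H^{s_1}$ and of $t\mapsto\Xi(t)$ in $W^{-\frac12-\eps,\infty}$. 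Together with dominated convergence this gives $\If_{\pl}^{(1),\Xi}(w)\in C([0,T];H^{\frac12+2\eps})$, i.e.\ $\If_{\pl}^{(1),\Xi}\in\L_2$. The main obstacle is really just the exponent bookkeeping in the product step: one must verify that the $2^{-\theta k s_1}$ gain produced by the lower-frequency cutoff on $w$ (which is where $s_1>0$ is essential) strictly dominates the $2^{k\eps}$ loss from $\Xi\in W^{-\frac12-\eps,\infty}$ combined with the single power of $\jb n^{-1}$ supplied by the Duhamel operator and the target regularity $\tfrac12+2\eps$; everything else is routine.
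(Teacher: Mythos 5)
Your proof is correct and follows essentially the same route as the paper: bound the Duhamel kernel crudely by $\jb{n}^{-1}$, discard all time oscillation, apply H\"older $L^2 \times L^\infty$ to the paraproduct blocks, and exploit the lower-frequency truncation $|n_1| \gtrsim |n_2|^{\theta}$ together with $s_1 > 0$ to produce the needed decay. The only cosmetic difference is in the bookkeeping: the paper redistributes the weight via the pointwise inequality $\jb{n}^{\frac12+2\eps}\jb{n}^{-1} \les \jb{n_1}^{s_1-\eps}\jb{n_2}^{-\frac12-2\eps}$ (valid on the support thanks to $|n_2|^{\theta}\les|n_1|$, $\jb{n}\sim\jb{n_2}$), after which the $j$- and $k$-sums are performed independently and the $k$-sum is controlled by the Besov embedding $W^{-\frac12-\eps,\infty}\hookrightarrow B^{-\frac12-2\eps}_{\infty,1}$; you instead fix a dyadic shell $|n|\sim 2^k$, sum the truncated $j$-range via Cauchy--Schwarz to harvest the gain $2^{-\theta k s_1}$, and then sum the squared shell contributions. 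Both computations extract the same gain from the same two structural facts (the truncation $j\gtrsim\theta k$ and the positive regularity of $w$), differing only in where the slack parameter $\eps$ is spent; your constraint $3\eps<\theta s_1$ and the paper's $\frac{4\eps}{\theta}\le s_1-\eps$ are both of the form $\eps\les\theta s_1$. Your treatment of time continuity (splitting off the boundary integral over $[t_2,t_1]$ and using a $\min(2,(\jb{n}|t_1-t_2|)^{\delta})$ bound on the difference of sines) is a reasonable fleshing-out of the step the paper leaves to the reader.
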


As a direct consequence of Lemma \ref{LEM:sto3}
with Lemma \ref{LEM:stoconv}, 
we obtain the following corollary for the paracontrolled operator
$\If_{\pl}^{(1)}$ defined in \eqref{X3} (and \eqref{XX2a}).

\begin{corollary}\label{COR:sto3a}
Let  $0 < s_1 < \frac 12$ and $T > 0$.
Then, given small $\theta > 0$, 
there exists small $\eps= \eps(s_1, \ta)  > 0$
such that 
the paracontrolled operator $ \If_{\pl}^{(1)}$ defined in \eqref{X3} belongs 
to $\L_2$ in \eqref{L1}
almost surely.
Moreover, by letting
$ \If_{\pl}^{(1), N}$, $N \in \N$, denote the paracontrolled operator
in \eqref{X3} with $\<1>$ replaced by the truncated stochastic convolution $\<1>_N$ in \eqref{so4a}, 
the truncated paracontrolled operator $ \If_{\pl}^{(1), N}$ converges almost surely to $ \If_{\pl}^{(1)}$ 
in $\L_2$.

\end{corollary}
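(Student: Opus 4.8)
The statement is an immediate consequence of Lemma \ref{LEM:sto3} together with the almost sure regularity of the stochastic convolution $\<1>$ established in Lemma \ref{LEM:stoconv}\,(i), so the argument is short and formal. First I would fix $0 < s_1 < \frac12$ and small $\theta > 0$, and let $\eps = \eps(s_1, \theta) > 0$ be the parameter produced by Lemma \ref{LEM:sto3}. By Lemma \ref{LEM:stoconv}\,(i), there is an event of probability one on which $\<1> \in C([0,T]; W^{-\frac12 - \eps, \infty}(\T^3))$; on this event we may apply Lemma \ref{LEM:sto3} with $\Xi = \<1>$. Since $\If_{\pl}^{(1)} = \If_{\pl}^{(1),\<1>}$ by the definitions \eqref{X3}, \eqref{XX2a}, and \eqref{XX2}, we conclude that $\If_{\pl}^{(1)} \in \L_2$ almost surely, where $\L_2$ is the class in \eqref{L1}.

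For the convergence statement, the key point is that Lemma \ref{LEM:sto3} provides a \emph{linear and continuous} dependence of $\If_{\pl}^{(1),\Xi}$ on $\Xi$: inspecting \eqref{XX2}, the map $\Xi \mapsto \If_{\pl}^{(1),\Xi}$ is linear, and the estimate in Lemma \ref{LEM:sto3} is really an operator-norm bound of the form $\| \If_{\pl}^{(1),\Xi} \|_{\L_2} \lesssim \| \Xi \|_{C([0,T]; W^{-\frac12 - \eps, \infty})}$, with an implicit constant depending only on $s_1$, $\theta$, $\eps$, and $T$. Applying this to $\Xi = \<1> - \<1>_N$ (noting $\If_{\pl}^{(1),N} = \If_{\pl}^{(1),\<1>_N}$ and that $\<1>_N = \pi_N \<1>$ is itself admissible as an input), we get
\begin{align*}
\big\| \If_{\pl}^{(1)} - \If_{\pl}^{(1),N} \big\|_{\L_2}
= \big\| \If_{\pl}^{(1),\<1> - \<1>_N} \big\|_{\L_2}
\lesssim \big\| \<1> - \<1>_N \big\|_{C([0,T]; W^{-\frac12 - \eps, \infty})}.
\end{align*}
By Lemma \ref{LEM:stoconv}\,(i), the right-hand side tends to $0$ almost surely as $N \to \infty$, which gives the claimed almost sure convergence of $\If_{\pl}^{(1),N}$ to $\If_{\pl}^{(1)}$ in $\L_2$.

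Strictly speaking, to make the second half rigorous one should observe that the proof of Lemma \ref{LEM:sto3} (which I would invoke as a black box) establishes precisely such a norm bound rather than merely membership in $\L_2$; this is automatic since the bound is obtained by pointwise-in-frequency estimates that are linear in $\ft\Xi$. There is essentially no obstacle here — the whole content of the corollary sits in Lemma \ref{LEM:sto3} and Lemma \ref{LEM:stoconv}, and the only care needed is to state that the dependence on $\Xi$ is linear and bounded so that differences transfer directly, and that a single event of probability one can be chosen on which both the regularity of $\<1>$ and the convergence $\<1>_N \to \<1>$ hold, since both are furnished by Lemma \ref{LEM:stoconv}\,(i).
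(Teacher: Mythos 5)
Your proof is correct and takes precisely the paper's route: the paper obtains Corollary \ref{COR:sto3a} as a direct consequence of Lemma \ref{LEM:sto3} applied with $\Xi = \<1>$ (resp.\ $\Xi = \<1> - \<1>_N$) together with the almost sure regularity and convergence from Lemma \ref{LEM:stoconv}\,(i). Your observation that the estimate at the end of the proof of Lemma \ref{LEM:sto3} is in fact an operator-norm bound, linear in $\Xi$, is exactly what makes the difference argument for the convergence statement work.
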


\begin{proof}[Proof of Lemma \ref{LEM:sto3}]
Let $s_1 >0$.
Under $|n_2|^\theta \les |n_1| \ll |n_2|$ with $n = n_1 + n_2$,
we have
\begin{align}
\jb{n}^{\frac 12+2\eps} \frac{1}{\jb{n}} 
\les \jb{n_1}^{\frac{4\eps}{\theta}}\jb{n_2}^{-\frac{1}{2}-2\eps}
\les \jb{n_1}^{s_1-\eps}\jb{n_2}^{-\frac{1}{2}-2\eps}
\label{A0}
\end{align}

\noi
by choosing $\eps = \eps (s_1, \theta) > 0$ sufficiently small.

Let $\ft w_{j}(n_1, t')
= \varphi_j(n_1)\ft w(n_1, t')$
and 
$ \ft{\Xi}_{k}(n_2, t')
=  \varphi_k(n_2)  \ft{\Xi}(n_2, t')$.
Then, 
from \eqref{XX2}
and~\eqref{A0}, 
we have
\begin{align*}
\| \If_{\pl}^{(1), \Xi}(w)(t) \|_{H^{\frac12+2\eps}}
& \les \int_0^t 
\sum_{j, k =0}^\infty
2^{(s_1-\eps)j} 2^{(-\frac{1}{2}-2\eps)k}
\bigg\|  \sum_{n =  n_1 +  n_2}
\ft w_{j}(n_1, t')
\ft{ \Xi}_{k}(n_2, t')\bigg\|_{\l^2_n} dt'\\
& \les \int_0^t 
\sum_{j, k =0}^\infty
2^{(s_1-\eps)j} 2^{(-\frac{1}{2}-2\eps)k}
\| w_{j}(t')\|_{L^{2}_x}
\| \Xi_{k}( t')\|_{L^\infty_x} dt'.
\end{align*}

\noi
Then, by summing over dyadic blocks and applying the trivial embedding \eqref{embed}, 
we obtain
\begin{align*}
\| \If_{\pl}^{(1), \Xi}(w)(t) \|_{H^{\frac12+2\eps}}
& \les T \|w\|_{L^\infty_T H_x^{s_1}}
\|\Xi\|_{L^\infty_T (B_{\infty, 1}^{-\frac 12 -2 \eps})_x}\\
& \les T \|w\|_{L^\infty_T H_x^{s_1}}
\|\Xi\|_{L^\infty_T W_x^{-\frac 12 - \eps, \infty}}
\end{align*}

\noi
for any $t \in [0, T]$.
The continuity in time of $ \If_{\pl}^{(1), \Xi}(w)$
follows
from modifying the computation above as in the previous subsections.
We omit the details.
\end{proof}

Finally, we present the proof of Proposition \ref{PROP:sto4}
on the  paracontrolled operator
$\If_{\pl, \pe}$ in~\eqref{X6}.
By writing out the frequency relations
more carefully as in \eqref{XX2a}, we have
\begin{align}
\begin{split}
\If_{\pl, \pe}(w) (t)
&  = \sum_{n \in \Z^3}e_n 
    \int_0^{t} 
\sum_{j = 0}^\infty
\sum_{n_1 \in \Z^3}
\varphi_j(n_1)
\ft w(n_1, t') \A_{n, n_1} (t, t') dt', 
\end{split}
\label{A0a}
\end{align}

\noi
where $\A_{n, n_1} (t, t')$ is given by 
\begin{align}
\begin{split}
\A_{n, n_1} (t, t')
& = \ind_{[0 , t]}(t')
\sum_{\substack{k = 0\\ j \leq \ta k + c_0}}^\infty
\sum_{\substack{\l, m = 0\\|\l-m|\leq 2}}^\infty
\sum_{n - n_1 =  n_2 + n_3} 
 \varphi_k(n_2)  
 \varphi_\l(n_1 + n_2) 
 \varphi_m(n_3) \\
& \hphantom{XXXX}
\times  \frac{\sin ((t - t') \jb{n_1 + n_2 })}{\jb{ n_1 + n_2 }} 
   \ft{\<1>}(n_2, t')\,   \ft{\<1>}(n_3, t)  .
\end{split}
\label{A0b}
\end{align}

\noi
For ease of notation, however, 
we simply use \eqref{X6} and \eqref{X7}
in the following, 
with the understanding that 
the frequency relations
$|n_1| \ll |n_2|^\theta$ and $ |n_1 + n_2|\sim |n_3|$
are indeed characterized by the use of smooth frequency cutoff functions
as in \eqref{A0a} and \eqref{A0b}.
Moreover, we drop the cutoff function $\ind_{[0 , t]}(t')$ in the following
with the understanding that $0 \leq t' \le t$.

\begin{proof}[Proof of Proposition \ref{PROP:sto4}]

We separately consider the contributions to $\If_{\pl, \pe}$
from $\A^{(1)}_{n, n_1}$ and $\A^{(2)}_{n, n_1}$ defined in \eqref{X9} and denote them respectively
$\If_{\pl, \pe}^{(1)}(w)$  and $\If_{\pl, \pe}^{(2)}(w)$.

Given dyadic $N_2 \geq 1$, 
let $\A_{n, n_1, N_2}^{(1)}(t, t')$
be 
the contribution 
to $\A_{n, n_1}^{(1)} (t, t')$ from $\big\{|n_2| \sim N_2\big\}$.\footnote{More precisely speaking, 
$\A_{n, n_1, N_2}^{(1)}(t, t')$
denotes the contribution in \eqref{A0b}
from $2^k \sim N_2$.}
Fix $0 \leq t \leq T$.
Then, 
from \eqref{X6}  and \eqref{X9}, we have
\begin{align*}
\begin{split}
\|\If_{\pl, \pe}^{(1)}(w)(t) \|_{H^{s_2 - 1}}
& \le \bigg\|\int_0^t 
\jb{n}^{s_2-1}\sum_{n_1 \in \Z^3}
\ft w(n_1, t') \A_{n, n_1}^{(1)}(t, t') dt'
\bigg\|_{\l^2_n}\\
& \les T^\frac{1}{2}  \|w\|_{L^\infty_t L^2_x}
\big\|\jb{n}^{s_2-1}
\A_{n, n_1}^{(1)} (t, t')
\big\|_{ L^2_{t'}([0, T]; \l^2_{n, n_1})}\\
& \les T^\frac{1}{2}  \|w\|_{L^\infty_t L^2_x}\sum_{\substack{N_2\geq 1\\ \text{dyadic}}}
\big\|\jb{n}^{s_2-1}
\A_{n, n_1, N_2}^{(1)} (t, t')
\big\|_{ L^2_{t'}([0, T]; \l^2_{n, n_1})}\\
& \les T^\frac{1}{2}  \|w\|_{L^\infty_t L^2_x} \|\A^{(1)} (t, \cdot)\|_{\A(T)},
\end{split}
\end{align*}

\noi
where we introduced the norm:
\begin{align}
\|\A^{(1)} (t, \cdot)\|_{\A(T)}  := 
\bigg(
\sum_{\substack{N_2\geq 1\\ \text{dyadic}}} N_2^\dl
\big\|\jb{n}^{s_2-1}
\A_{n, n_1, N_2}^{(1)} (t, t')
\big\|^2_{ L^2_{t'}([0, T]; \l^2_{n, n_1})}\bigg)^{\frac12}.
\label{Aop}
\end{align}

\begin{remark}\rm
For fixed $t, t' \in [0, T]$, set  $\mathcal{T}_{t, t'}(f) = 
\sum_{n_1 \in \Z^3}
\ft f(n_1) \A_{n, n_1}^{(1)}(t, t')e_n$.
Then, 
the expression
$\big\|\jb{n}^{s_2-1}
\A_{n, n_1}^{(1)} (t, t')
\big\|_{  \l^2_{n, n_1}}$ is nothing but the Hilbert-Schmidt norm
of the operator $\mathcal{T}_{t, t'}$
from $L^2(\T^3)$ into $H^{s_2-1}(\T^3)$.
Recalling that the Hilbert-Schmidt norm of a given operator
controls its  operator norm, 
it is natural to work with 
 the $\A(T)$-norm 
of $\A^{(1)} (t, \cdot)$ defined above
(which is conveniently modified to carry out analysis on each dyadic
block $\{ |n_2|\sim N_2\}$).

\end{remark}

By a similar argument, 
 we obtain 
\begin{align}
\begin{split}
\|\If_{\pl, \pe}^{(1)}(w)(t_1) - \If_{\pl, \pe}^{(1)}(w)(t_2)\|_{H^{s_2 - 1}}
& \les 
|t_1 - t_2|^\frac{1}{2}  \|w\|_{L^\infty_t L^2_x} \|\A^{(1)} (t_1, \cdot)\|_{\A(T)}\\
& \hphantom{X}
+ T^\frac{1}{2}  \|w\|_{L^\infty_t L^2_x} \|\A^{(1)} (t_1, \cdot)-\A^{(1)} (s_2, \cdot)\|_{\A(T)}
\end{split}
\label{Aop2}
\end{align}

\noi
for $t_1,t_2\in[0,T]$.

We  now  show that the random process $t \mapsto \A^{(1)} (t, \cdot)$ has almost surely 
 continuous trajectories (in $t$) with respect to  the Banach space generated by the norm $\|\cdot\|_{\A(T)}$.  
 In order to do so,  we apply Kolmogorov's continuity criterion 
 and we need, as usual, to evaluate sufficiently high  moments of the random variable $\A^{(1)} (t_1, \cdot)-\A^{(1)} (t_2, \cdot)$ with $t_1,t_2\in[0,T]$.

Note that the conditions $|n_1| \ll |n_2|^\theta$ for some small $\theta > 0$
and $|n_1 + n_2|\sim |n_3|$ imply $|n_2| \sim |n_3| \gg |n_1|$.
Moreover, with the condition $n - n_1 =  n_2 + n_3$, 
we have $|n_2| \sim |n_3| \ges |n|$.
Then, with~\eqref{sigma2}, we have
\begin{align}
 \E\big[\| & \A_{n, n_1, N_2}^{(1)}  (t, t')\|_{L^2_{t'}([0, T])}^2\big]\notag \\
& \leq\bigg\|\sum_{\substack{n - n_1 =  n_2 + n_3\\ |n_1| \ll |n_2|^\theta \\ |n_1 + n_2|\sim |n_3|\\
|n_2| \sim N_2\\n_2+n_3 \ne 0}}
 \frac{|\sin ((t - t') \jb{ n_1 + n_2 })|^2}{\jb{ n_1 + n_2 }^2} 
\E\Big[   | \ft{\<1>}(n_2, t') \, \ft{\<1>}(n_3, t) |^2\Big] \bigg\|_{L^1_{t'}([0, T])}\notag \\ 
& \hphantom{X}
+\bigg\|\sum_{\substack{n_2 \in \Z^3 \\ |n| \ll |n_2|^\theta\\|n_2| \sim N_2}}
  \frac{|\sin ((t - t') \jb{ n + n_2 })|^2}{\jb{ n + n_2 }^2} 
\E\Big[| \ft{\<1>}(n_2, t') \, \ft{\<1>}(-n_2, t) - \s_{n_2}(t, t')|^2\Big]  \bigg\|_{L^1_{t'}([0, T])}\notag \\
& \les T\bigg\{
\sum_{\substack{n - n_1 =  n_2 + n_3\\ |n_1| \ll |n_2|^\theta \\ |n_1 + n_2|\sim |n_3|\\
|n_2|\sim N_2\\n_1\ne n}}
\frac{1}{\jb{n_2}^6}
+ \sum_{\substack{n_2 \in \Z^3 \\ |n| \ll |n_2|^\theta\\|n_2| \sim N_2}}
\frac{1}{\jb{n+ n_2}^2 \jb{n_2}^4}
\bigg\} \notag \\
& \les TN_2^{-3} \ind_{|n_1|\ll N_2^\theta} \ind_{ |n| \les  N_2}.
\label{A1}
\end{align}

\noi
Therefore, we obtain
\begin{align}
\begin{split}
\E\big[\|\A^{(1)} (t, \cdot)\|_{\A(T)}^2\big] 
& = \sum_{\substack{N_2\geq 1\\ \text{dyadic}}} N_2^\dl \sum_{n,n_1}
\jb{n}^{2s_2-2}   \, \E\big[\|  \A_{n, n_1, N_2}^{(1)}  (t, t')\|_{L^2_{t'}([0, T])}^2\big] \\
& \leq \sum_{\substack{N_2\geq 1\\ \text{dyadic}}} 
N_2^{\dl-3} \sum_{n,n_1 \in \Z^3}
\jb{n}^{2s_2-2} \, \ind_{|n_1|\ll N_2^\theta} \ind_{ |n| \les  N_2} \\
& \leq \sum_{\substack{N_2\geq 1\\ \text{dyadic}}} N_2^{\dl+3\theta+2s_2-2} < \infty
\end{split}
\label{A1bis}
\end{align}

\noi
by choosing $\dl = \dl(s_2) > 0$ and $\theta = \ta(s_2)>0$
sufficiently small since $s_2<1$.

From \eqref{X9}, we have
\begin{align}
\begin{split}
\A^{(1)}_{n, n_1}
&  (t_1, t')
 - \A^{(1)}_{n, n_1} (t_2, t')\\
& =\sum_{\substack{n - n_1 =  n_2 + n_3\\ |n_1| \ll |n_2|^\theta \\ |n_1 + n_2|\sim |n_3|}}
 \frac{\sin ((t_1 - t') \jb{ n_1 + n_2 }) - \sin ((t_2 - t') \jb{ n_1 + n_2 })}{\jb{ n_1 + n_2 }} 
B_{n_2, n_3}(t_1, t')
 \\
& \hphantom{X}
+ \sum_{\substack{n - n_1 =  n_2 + n_3\\ |n_1| \ll |n_2|^\theta \\ |n_1 + n_2|\sim |n_3|}}
 \frac{\sin ((t_2 - t') \jb{ n_1 + n_2 })}{\jb{ n_1 + n_2 }} 
\Big( B_{n_2, n_3}(t_1, t') - B_{n_2, n_3}(t_2, t')\Big), 
\end{split}
\label{A2}
\end{align}

\noi
where $B_{n_2, n_3}(t, t') =  \ft{\<1>}(n_2, t') \, \ft{\<1>}(n_3, t) - \ind_{n_2 + n_3 = 0}\cdot \s_{n_2}(t, t')$.
Arguing as in \eqref{A1} and \eqref{A1bis}, we obtain
\begin{align*}
\E\big[\|\A^{(1)} (t_1, \cdot)-\A^{(1)} (t_2, \cdot)\|_{\A(T)}^2\big] \les |t_1-t_2|^{\s},
\end{align*}

\noi
for some small $\s>0$. 
Indeed, the first term on the right-hand side of \eqref{A2}
can be controlled by the mean value theorem, 
creating an additional factor of $\jb{n_1+n_2}^\s |t_1 - t_2|^\s$.
On the other hand, by writing
\begin{align*}
B_{n_2, n_3}(t_1, t')
- B_{n_2, n_3}(t_2, t')
&  =  \ft{\<1>}(n_2, t')  (\ft{\<1>}(n_3, t_1)- \ft{\<1>}(n_3, t_2))\\ 
 & \hphantom{X} 
- \E\Big[  \ft{\<1>}(n_2, t') \, (\ft{\<1>}(-n_2, t_1)- \ft{\<1>}(-n_2, t_2))\Big]\\
 & \hphantom{X} 
 - \ind_{n_2 + n_3 = 0}\big\{ \s_{n_2}(t_1, t') - \s_{n_2}(t_2, t')\big\}, 
\end{align*}

\noi
we can apply \eqref{V0} and the mean value theorem
to create $|t_1 - t_2|^\s$ at the expense of losing a small power
in $n_2$ or $n_3$.

Finally, note that 
 $\A^{(1)}_{n, n_1}$ is a homogenous Wiener chaos of order 2.
Therefore by Minkowski's inequality and the Wiener chaos estimate  (Lemma \ref{LEM:hyp}), we obtain 
 \begin{align*}
\E\big[\|\A^{(1)} (t_1, \cdot)-\A^{(1)} (t_2, \cdot)\|_{\A(T)}^p\big] \les 
p^p |t_1-t_2|^{\s p },
\end{align*}

 \noi
for any $p \geq 2$.
 Finally, by Kolmogorov's continuity criterion, 
 we conclude that 
 \begin{align}
  \|\A^{(1)} (t_1, \cdot)-\A^{(1)} (t_2, \cdot)\|_{\A(T)} \le C(\omega) 
 |t_1-t_2|^{\s-},  
\label{A2tri}
 \end{align}
 
 \noi
 for all $t_1,t_2\in[0,T]$,  where the constant $C= C(\o)$ lies in $L^p(\O)$ for some large $p\gg 1$. 
  From~\eqref{Aop2} 
and \eqref{A2tri}, 
we then deduce the required almost sure continuity for $\If_{\pl, \pe}^{(1)}(w)$.

Next, we consider the contribution from 
$ \A^{(2)}_{n, n_1} (t, t')$ in \eqref{X9}.  This part is entirely deterministic.
Since 
$ \A^{(2)}_{n, n_1} (t, t') = 0$ unless $n = n_1$, 
we only consider $ \A^{(2)}_{n, n} (t, t')$.
In view of \eqref{S6}, we decompose $ \A^{(2)}_{n, n} (t, t')$ as
 \begin{align}
 \A^{(2)}_{n, n} (t, t')
&  =  t'\sum_{\substack{n_2 \in \Z^3 \\ |n| \ll |n_2|^\theta}}
 \frac{\sin ((t - t') \jb{n + n_2 })}{\jb{ n + n_2} } 
   \frac{\cos((t - t')\jb{n_2}) }{2 \jb{n_2}^2}  \notag\\
& \hphantom{X}
+ \sum_{\substack{n_2 \in \Z^3 \\ |n| \ll |n_2|^\theta}}
 \frac{\sin ((t - t') \jb{n + n_2 })}{\jb{ n + n_2} } 
 \cdot    O\bigg(\frac{1}{\jb{n_2}^3 }\bigg).
\notag   \\
& = t'    \sum_{\substack{n_2 \in \Z^3 \\ |n| \ll |n_2|^\theta}}
\frac{\sin ((t - t')( \jb{n + n_2 }+ \jb{n_2}))}{4\jb{ n + n_2} \jb{n_2}^2}  \notag\\
& \hphantom{X}
+ t'    \sum_{\substack{n_2 \in \Z^3 \\ |n| \ll |n_2|^\theta}}
 \frac{\sin ((t - t')( \jb{n + n_2 }- \jb{n_2}))}{4\jb{ n + n_2} \jb{n_2}^2}  \notag\\
& \hphantom{X}
+ \sum_{\substack{n_2 \in \Z^3 \\ |n| \ll |n_2|^\theta}}
 \frac{\sin ((t - t') \jb{n + n_2 })}{\jb{ n + n_2} } 
\cdot    O\bigg(\frac{1}{\jb{n_2}^3 }\bigg)\notag\\
& = : 
 \A^{(3)}_{n} (t, t')
 +  \A^{(4)}_{n} (t, t')
 +  \A^{(5)}_{n} (t, t').
 \label{A4a}
\end{align}

We will show that 
\begin{align}
\| \jb{n}^{-\eps} \A^{(j)}_{n} (t, t') \|_{\l^\infty_n(\Z^3)}
\leq C(T)
\label{A3}
\end{align}

\noi
for any $\eps >0$, $0 \leq t' \leq t \leq T$,  and  $j = 4, 5$.
Then, 
by denoting by $\If_{\pl, \pe}^{(j)}(w)$ the contribution to $\If_{\pl, \pe}(w)$
from $\ind_{n = n_1} \cdot \A_{n}^{(j)}$, 
it follows from \eqref{X6} and~\eqref{A3} that 
\begin{align}
\|\If_{\pl, \pe}^{(j)}(w)(t) \|_{H^{s_2 - 1}}
& \les \bigg\|\int_0^t 
\frac{1}{\jb{n}^{1-s_2}}
\ft w(n, t') \A_{n}^{(j)}(t, t') dt'
\bigg\|_{\l^2_n}\notag\\
& \les T  \|w\|_{L^\infty_t L^2_x}
\bigg\|\frac{1}{\jb{n}^{1-s_2}}
\A_{n}^{(j)} (t, t')
\bigg\|_{L^\infty_{t, t'}([0, T]; \l^\infty_{n})}\notag\\
& \les C(T)  \|w\|_{L^\infty_t L^2_x}
\label{A3a}
\end{align}

\noi
for $t \in [0, T]$ and $j = 4, 5$, provided that $s_2 < 1$.
The continuity in time of $\If_{\pl, \pe}^{(j)}(w)(t)$ follows from a similar argument.

By noting that $\jb{n+n_2} \sim \jb{n_2}\gg \jb{n}$
under $ |n| \ll |n_2|^\theta$, 
we see that \eqref{A3} is easily verified for $j = 5$.
On the other hand, 
  the sum for $\A^{(4)}_{n} (t, t')$ 
is not absolutely convergent.
As in Case 3 in Section \ref{SEC:sto2}, 
we exploit the symmetry $n_2 \leftrightarrow -n_2$
and the oscillatory nature of the sine kernel.
With \eqref{Y14} and the mean value theorem, we have
\begin{align}
\begin{split}
 \A^{(4)}_{n} (t, t')
& = t'    \sum_{\substack{n_2 \in \Ld \\ |n| \ll |n_2|^\theta}}
 \frac{\sin ((t - t')( \jb{n + n_2 }- \jb{n_2}))
 + \sin ((t - t')( \jb{n - n_2 }- \jb{n_2}) )}{4\jb{ n + n_2} \jb{n_2}^2}  \\
& \hphantom{X}
- \sum_{\substack{n_2 \in \Ld \\ |n| \ll |n_2|^\theta}}
 \frac{  \sin ((t - t')( \jb{n - n_2 }- \jb{n_2}) )}{4 \jb{n_2}^2}
 \bigg(\frac{1}{\jb{ n + n_2}} - \frac{1}{\jb{ n - n_2}}\bigg)
 \\
& =    \sum_{\substack{n_2 \in \Ld \\ |n| \ll |n_2|^\theta}}
 \frac{1}{4\jb{ n + n_2} \jb{n_2}^2} 
 \bigg\{
\sin \bigg((t - t')\Big(  \frac{\jb{n, n_2}}{\jb{n_2}} + \Theta^+(n, n_2)\Big)\bigg)\\
& \hphantom{XXXXX}
- \sin \bigg((t - t')\Big(  \frac{\jb{n, n_2}}{\jb{n_2}} - \Theta^-(n, n_2)\Big)\bigg)
 \bigg\} 
\\
& \hphantom{X}
 +   O\bigg(  \sum_{\substack{n_2 \in \Ld \\ |n| \ll |n_2|^\theta}}
 \frac{ \jb{n}}{\jb{n_2}^4}\bigg)\\
 & \les   \sum_{\substack{n_2 \in \Ld \\ |n| \ll |n_2|^\theta}}
 \frac{1}{\jb{ n + n_2} \jb{n_2}^2} 
\frac{\jb{n}^{2\dl}}{\jb{n_2}^\dl} + O(1)\\
& \les \jb{n}^\dl
\end{split}
\label{A4}
\end{align}
for any $\dl \in (0, 1]$.
This proves \eqref{A3} and hence \eqref{A3a}
for $j = 4$.

It remains to consider 
$ \A^{(3)}_{n} (t, t')$.
For this term, there is no internal cancellation structure
and we need to make use of its fast oscillation
by directly studying $\If_{\pl, \pe}^{(3)}(w)$.
From  \eqref{X6} and~\eqref{A4a},  we have
\begin{align}
\begin{split}
\|\If_{\pl, \pe}^{(3)}(w) (t)\|_{H^{s_2 - 1}}
&  \les 
\sum_{\eps_1 \in \{-1, 1\}}
\bigg\|
\frac{e^{i \eps_1 t ( \jb{n + n_2 }+ \jb{n_2})}}{\jb{n}^{1-s_2}} \\
& \hphantom{XX}
\times\sum_{\substack{n_2 \in \Z^3 \\ |n| \ll |n_2|^\theta}}
    \int_0^{t} 
\ft w(n, t')
t'    \frac{e^{ - i \eps_1   t'( \jb{n + n_2 }+ \jb{n_2})}}{ \jb{ n + n_2} \jb{n_2}^2} dt'\bigg\|_{\l^2_n}.
\end{split}
\label{A6}
\end{align}

\noi
Integrating by parts, we have
\begin{align}
\begin{split}
    \int_0^{t} 
& \ft w(n, t') t'    \frac{e^{ - i \eps_1   t'( \jb{n + n_2 }+ \jb{n_2})}}{  \jb{ n + n_2} \jb{n_2}^2} dt'\\
& = 
\frac{1}{ - i \eps_1   ( \jb{n + n_2 }+ \jb{n_2})\jb{ n + n_2} \jb{n_2}^2}\\
& \hphantom{X}
\times \bigg\{
\ft w(n, t) t e^{ - i \eps_1   t( \jb{n + n_2 }+ \jb{n_2})}\\
& \hphantom{XX}
 -     \int_0^{t} 
\big(\ft w(n, t')  + t' \dt\ft w(n, t') \big)   e^{ - i \eps_1   t'( \jb{n + n_2 }+ \jb{n_2})} dt'\bigg\}.
\end{split}
\label{A7}
\end{align}

\noi
Hence, from \eqref{A6} and \eqref{A7}, we obtain
\begin{align*}
\|\If_{\pl, \pe}^{(3)}(w) (t)\|_{H^{s_2 - 1}}
&  \les 
\sum_{n_2 \in \Z^3}
\frac{1}{\jb{n_2}^{4 - (s_2 + \eps) \theta}}
\Big(
\|w\|_{L^\infty_T H^{-1-\eps}_x} 
+ \|\dt w\|_{L^\infty_T H^{-1-\eps}_x} \Big)\\
&  \les 
\|w\|_{L^\infty_T H^{-1-\eps}_x} 
+ \|\dt w\|_{L^\infty_T H^{-1-\eps}_x}
\end{align*}

\noi
for some small $\eps > 0$.
The continuity in time of $\If_{\pl, \pe}^{(3)}(w)(t)$ follows from a similar argument.

This completes the proof of Proposition \ref{PROP:sto4}.
\end{proof}

\begin{remark}\label{REM:sym}
\rm
In handling the term 
$\A^{(4)}_{n} (t, t')$ in \eqref{A4}, 
we exploited the symmetrization  $n_2 \leftrightarrow -n_2$.
This may seem to raise a possible issue in 
treating regularization via  mollification
when a mollification kernel does not satisfy certain symmetry properties
(contrary to our claim in Remark \ref{REM:uniq}).
We point out, however, that this is not the case.

Recall that 
$\A^{(4)}_{n} (t, t')$
is defined in \eqref{A4a}
from 
$ \A^{(2)}_{n, n_1} (t, t')$,
which in turn is defined  in \eqref{X9}.  
Given a mollification kernel $\rho$, 
define $\rho_\dl$ as in \eqref{molli2}.
Then, defining the smoothed stochastic 
convolution $\<1>_\dl = \I(\xi_\dl)$
with $\xi_\dl = \rho_\dl * \xi$, 
we construct 
$ \A^{(2)}_{n, n_1} (t, t')$ associated
with this smoothed stochastic convolution 
$\<1>_\dl$.
In this case, instead of  \eqref{sigma2}, 
we have
\[\s_{n_2}^\dl(t_1, t_2 ) 
: = \E  \big[  \ft{\<1>}_\dl(n_2, t_1)  \,  \ft{\<1>}_\dl(-n_2, t_2) \big]
= \E  \big[  \ft \rho_{\dl}(n_2) \ft{\<1>}(n_2, t_1)  
\ft \rho_{\dl}(-n_2)
  \ft{\<1>}(-n_2, t_2) \big].
\]

\noi
Note that the effect of mollification 
$ \ft \eta_{\dl}(n_2)
\ft \eta_{\dl}(-n_2)$
is symmetric in  $n_2 \leftrightarrow -n_2$.
This observation allows us to carry out the symmetrization argument
in \eqref{A4} 
(and also the 
 symmetrization argument
in \eqref{Y14a} for the construction of 
$\<21p>$)
even for general (non-symmetric) mollification.

\end{remark}

\begin{remark}\label{REM:MAX}\rm
  We can also accommodate a space-time regularization of the noise in the form 
  of a smoothing kernel $\rho(x, t)$. 
In this case,  we need to impose an additional assumption
that a space-time mollification kernel $\rho(x, t)$ is even in $x$, 
namely, $\rho(-x, t) = \rho(x, t)$ for any $t \in \R$, implying that 
\begin{align}
\ft \rho(-n, t) = \ft \rho(n, t)
\label{P1}
\end{align}

\noi
for any $n \in \Z^3$ and $t \in \R$.

This is not
directly apparent from the computations above, so let us give some
indications of the argument. 
In order to treat a space-time mollification, 
 it is more
convenient to switch to a representation of the stochastic objects based
directly on the white noise. We write the stochastic convolution 
$\<1>_\dl = \I(\xi_\dl)
= \I(\rho_\dl * \xi)$
as
\begin{align}
 \ft{\<1>}_\dl (n, t) 
:= \int_\R \bigg[ \int_0^t 
\sum_{\eps \in \{ -1,  1 \}}
 \frac{\eps}{2 i} \frac{e^{i \eps \jb{n} (t - t')}}{\jb{n}} 
 \ft{\rho}_\dl (n, t' - \tau)  dt'
\bigg] d \beta_n (\tau),  
\label{P2}
\end{align}

\noi
  where $\ft{\rho}_\dl (n, t)$ is the spatial Fourier transform of the 
  space-time mollifier  $\rho_\dl$:
\[ \rho_\dl(x, t) = \dl^{-4} \rho(\dl^{-1} x, \dl^{-1}t).\]
  
\noi
The renormalizations and computations of the
various stochastic objects then proceed in a similar way as above. 
For example,  we have
\begin{align*}
  \ft{\<20>}_\dl  (n, t) 
  :\! & = \int_0^t 
\sum_{\eps_0 \in \{-1, 1\}}
\frac{\varepsilon_0}{2 i} 
\frac{e^{i \eps_0 \jb{n}(t - t')}}{\jb{n}}\ft{\<2>}_\dl  (n, t') dt'\\
& = 2 \int_{\R^2} \sum_{n_1, n_2 \in \Z^3} Q^\dl_{n, n_1, n_2} (\tau_1, \tau_2)d \beta_{n_1} (\tau_1)d
\beta_{n_2} (\tau_2) , 
\end{align*}

\noi
where 
\begin{align*}
Q_{n, n_1, n_2}^\dl (\tau_1, \tau_2) 
& = \int_{0 \leq t_1 \leq t_2 \leq t}
\bigg( \int^t_{\tau_2}\sum_{\eps_0, \eps_1, \eps_2 \in \{-1, 1\}}
 \frac{\eps_0
\eps_1 \eps_2}{(2 i)^3} \frac{e^{i \eps_0 \jb{n} (t - t') 
+ i \sum_{j = 1}^2 \eps_j \jb{ n_j } (t' - t_j )}}
{\jb{n} \jb{n_1}\jb{n_2}} dt' \bigg)\\
& 
\hphantom{X}
\times
\ft{\rho}_\dl (n_1, t_1 - \tau_1) \ft{\rho}_\dl (n_2, t_2 - \tau_2)d t_1 d
t_2. 
\end{align*} 

\noi
Note that the double Wiener-It\^o integral
accounts for  the Wick renormalization 
on $\ft{\<2>}_\dl $.
Hence, we have 
\[ \E\big[|  \ft{\<20>}_\dl  (n, t) |^2\big] 
\sim \sum_{n_1,n_2\in \Z^3 } 
\int_{\R^2}
 | Q^\dl_{n, n_1, n_2} (\tau_1, \tau_2) |^2 d \tau_1 d \tau_2. \]

\noi
By Young's inequality in  the two convolutions in time, 
we obtain
\begin{align*}
 \E\big[|  \ft{\<20>}_\dl  (n, t) |^2\big]  
 \les\sum_{n_1, n_2\in \Z^3} 
\int_{0 \leq t_1 \leq t_2 \leq t} 
\bigg| &  \int^t_{t_2}  \sum_{\eps_0, \eps_1, \eps_2\in \{-1, 1\}}
\eps_0 \eps_1 \eps_2 \\
& 
\times \frac{e^{i \eps_0 
\jb{n} (t - t') + i \sum_{j = 1}^2 \eps_j \jb{n_j }
(t' - t_j)}}{\jb{n}\jb{n_1}\jb{n_2}}dt' \bigg|^2 dt_1 d t_2 
\end{align*}

\noi
uniformly in $0 < \dl < 1$
for the space-time mollifier  $\rho_\dl$. 
After integrating in $t'$, 
the above expression essentially reduces to   
  \eqref{SS1} in the proof of Proposition \ref{PROP:sto1}
and hence
the rest follows as before. 
A more refined argument yields 
 convergence in $L^p(\O)$ as the
regularization is removed.

Recall that 
one of the key ingredients in studying 
the resonant product 
 $\<21p>$
and the paracontrolled operator  $\If_{\pl, \pe}$ 
is
 the symmetrization argument
 $n_2 \leftrightarrow -n_2$, 
  appearing 
in
\eqref{Y14a} 
and 
 \eqref{A4}.
It is at this point that we need to make use of the symmetry assumption
\eqref{P1}.
As in Remark \ref{REM:sym}, 
it suffices to 
consider
\begin{align}
\begin{split}
\s_{n_2}^\dl(t_1, t_2 ) 
: & = \E  \big[  \ft{\<1>}_\dl(n_2, t_1)  \,  \ft{\<1>}_\dl(-n_2, t_2) \big]\\
& =   \E  \big[  
 \big( \rho_\dl (n_2, \cdot\,)*
  \ft{\<1>}(n_2, \cdot\, )\big)(t_1)  
\big(  \rho_\dl (- n_2, \cdot\,)*
    \ft{\<1>}_\dl(-n_2, \cdot\,)\big)(t_2) \big].
\end{split}
\label{P3}
\end{align}

\noi
Using \eqref{P2}, we have
\begin{align*}
\s_{n_2}^\dl(t_1, t_2 ) 
&  = - 
 \int_0^{t_1} 
 \int_0^{t_2}
\sum_{\eps_1, \eps_2 \in \{ -1,  1 \}}
 \frac{\eps_1\eps_2}{4} 
 \frac{e^{i \eps_1 \jb{n_2} (t_1 - t'_1)
 + i \eps_2 \jb{n_2} (t_2 - t'_2)
 }}{\jb{n_2}^2} \\
& \hphantom{XXXXXXXXX}
\times \bigg( \int_\R 
\ft{\rho}_\dl (n_2, t'_1 - \tau)  
  \ft{\rho}_\dl (- n_2, t'_2 - \tau)  
d \tau  
  \bigg)
  dt'_1
 dt'_2\\
 & = 
\s_{- n_2}^\dl(t_1, t_2 ),   
\end{align*}

\noi
where we used the symmetry assumption \eqref{P1}
in the last step.
This shows that 
 the effect of space-time mollifications in \eqref{P3} 
is symmetric in  $n_2 \leftrightarrow -n_2$.
This observation allows us to carry out the symmetrization argument
in \eqref{Y14a} and \eqref{A4}, provided  that 
 a space-time mollification kernel $\rho(x, t)$ is even in $x$.

%
%
\end{remark}

\section{Proof of Theorem \ref{THM:1}}
\label{SEC:proof1}

We present now the proof of Theorem \ref{THM:1}.
In the following, we assume that $0 < s_1< s_2 < 1$.
Recall that 
$\big(8, \frac{8}{3}\big)$ 
and $(4, 4)$ are $\frac 14$-admissible
and  $\frac 12$-admissible, respectively.
Given $0 < T \leq 1$, 
we define $X^{s_1}_T$
(and $Y^{s_2}_T$) as the intersection of the energy spaces of regularity  $s_1$ 
(and $s_2$, respectively) and the Strichartz space:
\begin{align}
\begin{split}
 X^{s_1}_T 
 & = C([0,T];H^{s_1}(\T^3))\cap C^1([0,T]; H^{s_1-1}(\T^3))
 \cap L^8([0, T]; W^{s_1-\frac 14, \frac{8}{3}}(\T^3)),\\
 Y^{s_2}_T 
 & = C([0,T];H^{s_2}(\T^3))\cap C^1([0,T]; H^{s_2-1}(\T^3))
 \cap L^4([0, T]; W^{s_2 - \frac 12, 4}(\T^3))
\end{split}
 \label{M0}
\end{align}

\noi
and set 
\begin{align*}
Z^{s_1, s_2}_T =  X^{s_1}_T\times  Y^{s_2}_T.
\end{align*}

Let $\Phi = (\Phi_1, \Phi_2)$ be as in \eqref{SNLW8}
with the enhanced data set $\Xi$ in \eqref{data1}
belonging to $\mathcal{X}^{s_1, s_2, \eps}_T$
for some small $\eps = \eps(s_1, s_2)> 0$.
By the Strichartz estimates (Lemma~\ref{LEM:Str}), 
 the paraproduct estimate (Lemma \ref{LEM:para}), 
 and the regularity assumptions on $\<1>$ and $\<20>$, we have 
\begin{align}
\| \Phi_1(X, Y) \|_{X^{s_1}_T}
& \les \| (X_0, X_1) \|_{\H^{s_1}}
+  \|(X+Y-\<20>)\pl \<1>\|_{L^1_T H^{s_1 - 1}}\notag\\
& \les \| (X_0, X_1) \|_{\H^{s_1}}
+  T \|X+Y-\<20>\|_{L^\infty_{T}L^2_x} \| \<1>\|_{L^\infty_T W_x^{-\frac 12 -\eps, \infty}}\notag \\
& \les \| (X_0, X_1) \|_{\H^{s_1}}
+  T \big( 1 + \|(X, Y)\|_{Z^{s_1, s_2}_T} \big)
\label{M1}
\end{align}

\noi
provided that $s_1 - 1 < -\frac 12-\eps$, namely $s_1 < \frac 12$.
Similarly, 
by Lemmas~\ref{LEM:Str} and \ref{LEM:para}
with the regularity assumption on the enhanced data set
 $\Xi$ in \eqref{data1} and Corollary~\ref{COR:sto3a}, 
we have
\begin{align}
\bigg\| S(t)(Y_0 & , Y_1)  -  \int_0^t   \frac{\sin((t-t')\jb{\nb})}{\jb{\nb} }
\Big[ 2(X+Y-\<20>)\pg \<1> + 2  Y\pe \<1> - 2 \<21p> \notag\\
& \hphantom{XXXXXXXX}
 + 2Z
-4 \If_{\pl}^{(1)}(X + Y - \<20>)\pe \<1>\Big](t') dt' \bigg\|_{Y^{s_2}_T}\notag\\
& \les 
\| (Y_0, Y_1) \|_{\H^{s_2}}
+ 
 \|(X+Y-\<20>)\pg \<1>\|_{L^1_T H^{s_2 -1 }_x} +  
\| Y\pe \<1>\|_{L^1_T H^{s_2 -1 }_x} 
+ \| \<21p>\|_{L^1_T H^{s_2 -1 }_x} \notag\\ 
& \hphantom{XXXXXXXX}
+ \|Z\|_{L^1_T H^{s_2-1}_z}
+ \| \If_{\pl}^{(1)}(X + Y - \<20>)\pe \<1>\|_{L^1_T H^{s_2 -1 }_x}
\notag\\
& \les 
 \| (Y_0, Y_1) \|_{\H^{s_2}}
+  T\big( 1 + \|X+Y-\<20>\|_{L^\infty_T H^{s_1 }_x} +  
\| Y\|_{L^\infty_T H^{s_2  }_x} \big) \notag\\ 
& \les 
 \| (Y_0, Y_1) \|_{\H^{s_2}}
+ T \big( 1 + \|(X, Y)\|_{Z^{s_1, s_2}_T} \big), 
\label{M2}
\end{align}

\noi
provided that
$s_2 - 1 < \min(s_1 - \frac 12-2\eps, -\eps)$ and $s_2 + (-\frac 12-\eps) > 0$,
namely, 
\begin{align*}
\tfrac 12 < s_2 < \min\big(1, s_1 +\tfrac 12\big).
\end{align*}

\noi
Similarly, we have
\begin{align}
\bigg\|  \int_0^t   \frac{\sin((t-t')\jb{\nb})}{\jb{\nb}}    \If_{\pl, \pe}(X + & Y - \<20>)(t') dt'
 \bigg\|_{Y^{s_2}_T}
   \les  \|\If_{\pl, \pe}(X + Y - \<20>)\|_{L^1_T H^{s_2 -1 }_x}\notag\\
 &  \les T  \|X + Y - \<20>\|_{L^\infty_T L^2_x\cap C^1_TH^{-1-\eps}_x}\notag\\
& \les  T \big( 1 + \|(X, Y)\|_{Z^{s_1, s_2}_T} \big), 
\label{M3} 
\end{align}

\noi
provided that $s_2 < 1$.
Lastly, by Lemma \ref{LEM:Str}
with the fractional Leibniz rule (Lemma \ref{LEM:bilin}), we have
\begin{align}
\bigg\| \int_0^t &  \frac{\sin((t-t')\jb{\nb})}{\jb{\nb}} 
 (X+Y-\<20>)^2(t') dt'\bigg\|_{Y^{s_2}_T}
 \les \| \jb{\nb}^{s_2 - \frac 12}(X+Y-\<20>)^2\|_{L^\frac{4}{3}_{T, x}}\notag\\
& \les T^\frac{1}{4} 
\Big(\| \jb{\nb}^{s_2 - \frac 12}X\|_{L^8_T L^\frac{8}{3}_x}^2
+ \| \jb{\nb}^{s_2 - \frac 12}Y\|_{L^4_{T, x}}^2
+ \| \jb{\nb}^{s_2 - \frac 12}\<20>\|_{L^\infty_{T, x}}^2\Big)\notag\\
& \les T^\frac{1}{4} \big( 1 + \|(X, Y)\|_{Z^{s_1, s_2}_T}^2 \big), 
\label{M4}
\end{align}

\noi
provided that $s_2 \leq \min(1 -\eps, s_1 + \frac 14)$.

From \eqref{M1}, \eqref{M2}, \eqref{M3}, and \eqref{M4}, we obtain 
\begin{align}
\| \Phi(X, Y) \|_{Z^{s_1, s_2}_T}
\les 
\| (X_0, X_1) \|_{\H^{s_1}}
+ 
\| (Y_0, Y_1) \|_{\H^{s_2}}
+ 
T^\ta \big( 1 + \|(X, Y)\|_{Z^{s_1, s_2}_T}^2\big)
\label{M5}
\end{align}

\noi
for some $\ta > 0$.
By repeating a similar computation, 
we also obtain the following estimate on the difference: 
\begin{align}
\| \Phi(X, Y) & - \Phi(\wt X, \wt Y)\|_{Z^{s_1, s_2}_T}\notag\\
& \les 
T^\ta \big( 1 + \|(X, Y)\|_{Z^{s_1, s_2}_T} + \|(\wt X, \wt Y)\|_{Z^{s_1, s_2}_T} \big) 
 \|(X, Y) - (\wt X, \wt Y) \|_{Z^{s_1, s_2}_T}.
\label{M6}
\end{align}

\noi
Therefore, by choosing $T>0$ sufficiently small
(depending on the $\mathcal{X}^{s_1, s_2,\eps}_1$-norm
of the  enhanced data set $\Xi$), 
we conclude from \eqref{M5} and \eqref{M6} that $\Phi$ in \eqref{SNLW8}
is a contraction on the ball $B_R \subset Z^{s_1, s_2}_T$
of radius $R\sim 
\| (X_0, X_1) \|_{\H^{s_1}}
+ \| (Y_0, Y_1) \|_{\H^{s_2}}$.
A similar computation yields 
continuous dependence of the solution $(X, Y)$
on the enhanced data set $\Xi$
measured in the $\mathcal{X}^{s_1, s_2,\eps}_1$-norm.
This concludes the proof of 
Theorem \ref{THM:1}.

\section{On the weak universality of the renormalized SNLW}

We conclude this paper by presenting the proof of
Theorem~\ref{THM:weak}. 
For the sake of concreteness, 
we take the Gaussian noise $\eta_\kk$ 
to be the mollified space-time white noise $\rho * \xi$ on $(\kk^{-1} \T)^3 \times \R_+$
given by 
\begin{align}
\eta_\kk  
=  \kk^{\frac{3}{2}} \sum_{n\in \Z^3}  \ft \rho (\kk n)  \frac{d\beta_n}{dt} e_{\kk n},
\label{eta-kappa}
\end{align}

\noi
where $\rho$ is a (smooth) 
mollification kernel with  support in $\T^3 \cong \big[-\frac 12,\frac 12\big)^3$, 
 $\{\beta_n\}_{n \in \Ld_0}$
 is a family of mutually independent complex-valued
Brownian motions, and   $\beta_{-n} := \cj{\be_n}$, $n \in \Ld_0$,  as in~\eqref{Wiener1}. 
It is not difficult to see that $\eta_\kk$ is indeed a random field on 
$(\kk^{-1} \T)^3\times \R_+$ which is smooth in space and white in time with stationary correlations.

Our aim is to describe the long time and large space behavior of the solution $w_\kk$
to \eqref{weq}.
In order to do so, we perform a change of variables 
$u_{\kk} (x, t) = \kk^{- 2} w_{\kk} (\kk^{-1} x, \kk^{-1} t)$ as in \eqref{scale1}.
Then, the equation \eqref{weq}  takes the form:
\begin{align}
\dt^2 u_{\kk} + (1 - \Dl) u_{\kk}
 = \kk^{- 4}  f (\kk^{2} u_{\kk} ) + \kk^{- 4}
   a_{\kk}^{(0)} + \kk^{- 2} a_{\kk}^{(1)} u_{\kk}
   + (1-\kk^{-2})u_\kk
    +  \xi_{\kk} 
\label{WU1}
\end{align}

\noi
on $\T^3\times \R_+$.
Here, 
 $\xi_{\kk} (x, t) = \kk^{-2} \eta_{\kk} (\kk^{-1} x, \kk^{-1} t)$ is chosen so that 
$\xi_\kk$ converges in law to 
the space-time white noise $\xi$ on $\T^3\times \R_+$ as $\kk \to 0$.
Indeed, from \eqref{eta-kappa}, 
 we deduce that
\begin{align}
\label{xi-kappa}
\xi_\kk =  \sum_{n\in \Z^3}  \ft {\rho}(\kk n)  \frac{d\wt \be _n}{dt} e_{n}, 
\end{align}

\noi
where $\{\wt \be_n\}_{n \in \Ld_0}$ is a family of 
mutually independent complex Brownian motions 
with the same joint law as $\{\beta_n\}_{n\in \Ld_0}$
and $\wt \beta_{-n} := \cj{\wt \be_n}$, $n \in \Ld_0$. 
By taking 
\begin{align*}
\xi =  \sum_{n\in \Z^3}    \frac{d\wt \be _n}{dt} e_{n}
\end{align*}

\noi
as a realization of the space-time white noise $\xi$, 
we see that $\xi_\kk$ converges to $\xi$
in $C^{-\frac 12-\eps}(\R_+; W^{-\frac 32- \eps, \infty}(\T^3))$
(endowed with the compact-open topology)
almost surely for any $\eps > 0$.

By the Taylor remainder theorem, 
we can write the right-hand side of \eqref{WU1} (excluding $\xi_\kk$)
as 
\begin{align*}
\begin{split}
 \kk^{- 4} f (\kk^{2} u_{\kk}) & + \kk^{- 4} a_{\kk}^{(0)} + \kk^{- 2} a_{\kk}^{(1)} u_{\kk} 
 +  (1-\kk^{-2})u_\kk\\
 & = \big\{\kk^{
    - 4} f (0) + \kk^{ - 4} a_{\kk}^{(0)}\big\} 
 + \big\{\kk^{- 2} f' (0) + \kk^{- 2} a_{\kk}^{(1)}+  (1-\kk^{-2}) \big\} u_{\kk}\\
 & \hphantom{X}  +
   \frac{f'' (0)}{2}   u_{\kk}^2 + R_{\kk},   
\end{split}
\end{align*}

\noi
where $R_{\kk}$ is the remainder given by 
\begin{align}
 R_{\kk} = \kk^2 u_{\kk}^3 \int_0^1 \frac{ f'''
   (\tau \kk^2 u_{\kk})}{6}  (1 - \tau)^2  d\tau . 
\label{WU3}
\end{align}

Let  $\<1>_{\kk}$ be the solution of the linear equation:
\begin{align}
 (\partial_t^2  + 1 - \Delta) \<1>_\kk = \xi_{\kk}. 
\label{WU3a}
 \end{align}

\noi
Then, with  $b_\kk(t) = \E\big[(\<1>_\kk(t))^2\big]$, 
we define the Wick power $\<2>_\kk$ by 
\begin{align}
 \<2>_\kk = (\<1>_\kk)^2 - b_{\kk}.
\label{WU4}
\end{align}

\noi
We now choose the time-dependent parameters $a_\kk^{(0)}$ and $a_\kk^{(1)}$ 
by setting
\begin{align}
\label{WU5}
 a^{(0)}_{\kk} = - f (0) -  \kk^4 c_f b_{\kk}
 \qquad\text{and}\qquad
a^{(1)}_{\kk} = - f' (0) + (1 - \kk^2), 
\end{align}

\noi
where $c_f =  \frac{f'' (0)}{2}$.
Then, by writing
\begin{align*}
u_{\kk} =  \<1>_\kk -  w_{\kk}, 
\end{align*}

\noi
we see from \eqref{WU1}, \eqref{WU3}, and \eqref{WU5} that $v_\kk$ satisfies
\begin{align}
 \partial_t^2 w_{\kk}  + (1 - \Dl) w_{\kk} 
= c_f \<2>_\kk + 2c_f  \<1>_{\kk}  w_{\kk} +
 c_f  w_{\kk}^2 + R_{\kk}, 
\label{WU6}
\end{align}

\noi
where we used \eqref{WU4}
to replace $(\<1>_\kk)^2 - b_{\kk}$ by $ \<2>_\kk$.
In the following, by scaling, 
we assume that 
\[c_f = -1.\]

Letting $\<20>_\kk = (\dt^2 + 1 - \Dl)^{-1} (\<2>_\kk)$, 
we decompose $w_\kk$ as 
\begin{align*}
w_\kk = - \<20>_\kk + X_\kk + Y_\kk
\end{align*}

\noi
as in Section \ref{SEC:1}.
Then, by repeating the discussion in Section \ref{SEC:1}, 
we can rewrite the equation~\eqref{WU6} for $w_\kk$
into the following system for $X_\kk$ and $Y_\kk$:
\begin{align}
\begin{split}
 X_\kk (t)  &  =  -2 \int_0^t \frac{\sin((t-t')\jb{\nb})}{\jb{\nb}} 
 \big[(X_\kk+ Y_\kk - \<20>_\kk)\pl \<1>_\kk\big](t')dt',  \\
Y_\kk  (t) &   = 
- 
 \int_0^t \frac{\sin((t-t')\jb{\nb})}{\jb{\nb}} 
\Big[(X_\kk+Y_\kk-\<20>_\kk)^2 +  2(X_\kk + Y_\kk - \<20>_\kk)\pg \<1>_\kk\\
& \hspace{4em} 
+ 2 Y_\kk \pe \<1>_\kk - 2 \<21p>_\kk
  -R_\kk \\
& \hspace{4em}
 -4 \If_{\pl}^{(1)}(X_\kk+  Y_\kk - \<20>_\kk)\pe \<1> 
 -4\If_{\pl, \pe}^\kk(X_\kk Y_\kk - \<20>_\kk)\Big] (t') dt', 
\end{split}
 \label{SNLW8-wu}
\end{align}

\noi
where $\<21p>_\kk = \<20>_\kk\pe \<1>$
and $ \If_{\pl, \pe}^\kk$
is defined as in \eqref{X6} with $\<1>$ replaced by $\<1>_\kk$.

Let $\frac 14 < s_1 < \frac 12 < s_2 \leq s_1 + \frac 14$.
Note that the rescaled noise $\xi_\kk$ in \eqref{xi-kappa}  is basically the mollified
space-time white noise.
Hence, it is easy to see that 
the enhanced data set associated with the rescaled noise $\xi_\kk$:
\begin{align}
\Xi_\kk
= \big(0, 0, 0, 0, \<1>_\kk, \<20>_\kk,  \<21p>_\kk, 
0, 
 \If_{\pl, \pe}^\kk\big)
\label{so6-wu}
\end{align}

\noi
belongs to the class $\mathcal{X}^{s_1, s_2, \eps}_1$ defined in \eqref{L3}
since 
 $\<1>_\kk$, $\<20>_\kk$,  $\<21p>_\kk$,
 and  
$ \If_{\pl, \pe}^\kk$
satisfy the statement analogous
to Lemma \ref{LEM:stoconv}
and  Propositions \ref{PROP:sto1}, \ref{PROP:sto2},  and \ref{PROP:sto4}.

Note that the system \eqref{SNLW8-wu}
 is analogous to the original system~\eqref{SNLW8}
with the enhanced data set $\Xi$ replaced by $\Xi_\kk$ and  an additional source term given by the remainder
term  $R_\kk$. 
In the following, we  proceed as in the proof of Theorem \ref{THM:1}
and prove 
local well-posedness of the system \eqref{SNLW8-wu}
for  $\kk > 0$ on a time interval $[0, T]$,
where  $T = T(\o)$
is an almost surely positive stopping time, 
independent of $\kk > 0$.
Under the assumption $\|f'''\|_{L^\infty} < \infty$, 
we have  $R_\kk = O(\kk^2 u_\kk^3)$, 
where 
\begin{align*}
 u_\kk =  \<1>_\kk - \<20>_\kk + X_\kk + Y_\kk.
 \end{align*}

\noi
In order to handle the cubic structure of $R_\kk$, 
we need to modify the norm for the second component $Y_\kk$.
Let  $s_2 = \frac 12 + \s$
with some small $\s > 0$.
Noting that 
$\big(\frac{4}{1+2\s}, \frac{4}{1-2\s}\big)$
is $s_2$-admissible,
we define the $\wt Y^{s_2}_T$-space
by the norm:
\begin{align*}
\begin{split}
 \wt Y^{s_2}_T 
 & = C([0,T];H^{s_2}(\T^3))\cap C^1([0,T]; H^{s_2-1}(\T^3))
 \cap L^{\frac{4}{1+2\s}}([0, T]; L^{\frac{4}{1-2\s}}(\T^3))
\end{split}
\end{align*}

\noi
and set 
\begin{align*}
\wt Z^{s_1, s_2}_T =  X^{s_1}_T\times  \wt Y^{s_2}_T,
\end{align*}

\noi
where $X^{s_1}_T$ is as in \eqref{M0}.
In the following, we use the fact that 
$\big(\frac{4}{3+8\s}, \frac{4}{3-4\s}\big)$
is dual $s_2$-admissible.

Note that \eqref{M1}, \eqref{M2}, and \eqref{M3}
hold 
true even after replacing 
$Z^{s_1, s_2}_T$ and  $Y^{s_2}_T$
by $\wt Z^{s_1, s_2}_T$ and $\wt Y^{s_2}_T$, respectively.
Instead of \eqref{M4}, 
from the Strichartz estimates (Lemma \ref{LEM:Str})
and Sobolev's inequality on $X$, 
we have
\begin{align}
\bigg\| \int_0^t &  \frac{\sin((t-t')\jb{\nb})}{\jb{\nb}} 
 (X_\kk+Y_\kk-\<20>_\kk)^2(t') dt'\bigg\|_{\wt Y^{s_2}_T}
 \les \| (X_\kk+Y_\kk-\<20>_\kk)^2\|_{L^\frac{4}{3+8\s}_{T} L^\frac{4}{3-4\s}_x}\notag\\
& \les T^\ta 
\Big(\| X_\kk\|_{L^8_T L^\frac{8}{3-4\s}_x}^2
+ \| Y_\kk\|_{L^\frac{4}{1+2\s}_T L^\frac{4}{1-2\s}_x}^2
+ \| \<20>_\kk\|_{L^\infty_{T, x}}^2\Big)\notag\\
& \le C(\o) T^\ta \big( 1 + \|(X_\kk, Y_\kk)\|_{\wt Z^{s_1, s_2}_T}^2 \big)
\label{WU11}
\end{align}

\noi
for some $\ta > 0$, provided that $s_1 - \frac 1 4  \geq \frac{3\s}{2}$,
 which allows us to apply   Sobolev's inequality:
$
\|  X_\kk\|_{L^8_T L^\frac{8}{3-4\s}_x}  \les
\|  X_\kk\|_{L^8_T W^{s_1-\frac14, \frac83 }_x}
$.
Given $s_1 > \frac 14$, this condition can be satisfied by choosing $\s > 0$ sufficiently small.

Next, we estimate the contribution from  the remainder term $R_\kk$.
From \eqref{xi-kappa} and \eqref{WU3a}, 
we see that
 $\ft{\<1>}_\kk(n, t)$  is essentially supported on the spatial frequencies $\{|n|\les \kk^{-1}\}$.
Hence, we have 
$ \kk^{\frac{1}{2}+\eps}\<1>_\kk\in L^\infty([0, T]; L^\infty(\T^3))$
almost surely for any $\eps > 0$.
By a similar reasoning, 
the paracontrolled structure of the $X_\kk$-equation in \eqref{SNLW8-wu}
allows us to conclude that $X_\kk$
essentially has the spatial frequency support on $\{|n|\les \kk^{-1}\}$.
Therefore, 
from Lemma \ref{LEM:Str}, \eqref{WU3} with $\|f'''\|_{L^\infty} < \infty$, 
and Sobolev's inequality, 
we have
\begin{align}
\begin{split}
\bigg\| \int_0^t 
& \frac{\sin((t-t')\jb{\nb})}{\jb{\nb}} 
R_\kk(t') dt'
\bigg\|_{\wt Y^{s_2}_T}
  \les \kk^2\| ( \<1>_\kk - \<20>_\kk + X_\kk + Y_\kk)^3\|_{L^\frac{4}{3+8\s}_{T} L^\frac{4}{3-4\s}_x}\\
& 
\les 
\kk^{\frac{1}{2}-3\eps}T \Big( \| \kk^{\frac{1}{2}+\eps}\<1>_\kk\|^3_{L^\infty_{T, x}}
+ \| \<20>_\kk\|^3_{L^\infty_{T, x}} \Big)\\
& \hphantom{X}
+  \kk^{3\dl} T^\ta\Big( \| \kk^{\frac{2}{3}-\dl} X_\kk\|^3_{L^8_T L^\frac{12}{3-4\s}_x}
+  \| Y_\kk\|^3_{L^\frac{12}{3+8\s}_T L^\frac{12}{3-4\s}_x}\Big)\\
& \le C(\o) \, \kk^{\dl} T^\ta \big( 1 + \|(X_\kk, Y_\kk)\|_{\wt Z^{s_1, s_2}_T}^3 \big)
\end{split}
\label{WU12}
\end{align}

\noi
for some $\dl, \ta >0$. Here we used the frequency  support of $ X_\kk$ and
 Sobolev's inequality to bound
\[
\| \kk^{\frac{2}{3}-\dl} X_\kk\|_{L^8_T L^\frac{12}{3-4\s}_x}  \lesssim
\|  X_\kk\|_{L^8_T W^{- \frac23+\delta,\frac{12}{3-4\s}}_x} \lesssim
\|  X_\kk\|_{L^8_T W^{s_1-\frac14, \frac83 }_x}
\]
which holds when
 $3(\frac{3}{8} - \frac{3-4\s}{12}) 
 = \frac38 + \s \leq  (s_1 - \frac 14 ) + (\frac 23 - \dl) = s_1 + \frac{5}{12}-\dl$. This last condition is guaranteed by choosing $\s, \dl > 0$ sufficiently small. 
 Note that we  used the following bound:
 $ \| Y_\kk\|_{L^\frac{4}{1+8\s/3}_T L^\frac{4}{1-4\s/3}_x} 
 \les T^\frac{\s}{6} \| Y_\kk\|_{L^\frac{4}{1+2\s}_T L^\frac{4}{1-2\s}_x}$.
Lastly, we point out that 
it was important to use $s_2$-admissible
and dual $s_2$-admissible pairs
such that there is no derivative on $\R_\kk$
after applying the Strichartz estimate in~\eqref{WU12}.
Otherwise, a (fractional) derivative would fall on $f'''(\tau \kk^2 u_\kk)$ in~\eqref{WU3}
and we would need to use the fractional chain rule,
which would make the computation far more complicated.

Putting \eqref{M1}, \eqref{M2},  \eqref{M3}, 
\eqref{WU11}, and \eqref{WU12} together, 
we conclude that the system \eqref{SNLW8-wu}
is locally well-posed on $[0, T]$, 
where  $T = T(\o)$
is an  almost surely positive stopping time, 
independent of $\kk > 0$.

 As for the sequence $\{\Xi_N\}_{N\in \N}$ above, 
 one can show that, at least along subsequences, 
 the family $\{\Xi_\kk\}_{\kk\in(0,1)}$ in \eqref{so6-wu}converges (in the natural 
 $\mathcal{X}^{s_1, s_2,  \eps}_1$-topology)  almost surely 
 towards the random vector $\Xi$ given by~\eqref{so7} with $(u_0,u_1)=(0,0)$.
Let $(X, Y)$ be the solution to the original system \eqref{SNLW8}
with this random data  $\Xi$
and set $u$ by \eqref{so8}.
Then, by using the above estimates, 
we can estimate the difference $(X - X_\kk, Y - Y_\kk)$.
As a consequence, 
we conclude that that, along any countable sequence, 
 $u_\kk$ converges to 
to the same limit $u$ 
in $C([0,T];H^{-\frac12 -\eps}(\T^3))$
almost surely (and hence in probability), 
where $T = T(\o)$ is a random local existence time
 whose size depends only on the random data $\Xi$, 
in particular, independent of $\kk \to 0$.
Since the limit $u$ does not depend on a particular countable sequence of $\kk \to 0$, 
we can deduce that the whole family $\{u_\kk\}_{\kk \in (0, 1)}$ converges in probability towards $u$.
This completes the proof of Theorem~\ref{THM:weak}.

\begin{ackno}\rm
M.G. and H.K. were supported by the Deutsche Forschungsgemeinschaft (DFG, German Research Foundation) through the Hausdorff Center for Mathematics under Germany's Excellence Strategy - EXC-2047/1 - 390685813 and through CRC 1060 - project number 211504053.
%
T.O.~was supported by the European Research Council (grant no.~637995 ``ProbDynDispEq'' and grant no.~864138 ``SingStochDispDyn'').
M.G.~would like to thank the Isaac Newton Institute for Mathematical Sciences for support and hospitality during the programme Scaling limits, rough paths, quantum field theory when work on this paper was undertaken.  This work was supported by EPSRC Grant Number EP/R014604/1.
The authors are grateful to the anonymous referee for the helpful comments that have improved the presentation of this paper.

\end{ackno}

\end{document}